\tikzstyle{vertex}=[circle, draw, fill=black, inner sep=0pt, minimum size=3pt]
\tikzstyle{grvert}=[circle, draw, gray, fill=gray,  inner sep=0pt, minimum
\newtheorem{theorem}{Theorem}[section]
\newtheorem{lemma}[theorem]{Lemma}
\newtheorem{corollary}[theorem]{Corollary}
\newcommand{\W}{\mathcal{W}}
\newcommand{\inn}{\mathbf{i}}
\newcommand{\trr}{\mathbf{t}}
\newcommand{\T}{\mathcal{T}}
\newcommand{\V}{\mathcal{V}}
\newcommand{\K}{\mathcal{K}}
\newcommand{\s}{\mathcal{S}}
\newcommand{\D}{\mathcal{D}}
\newcommand{\C}{\mathcal{C}}
\newcommand{\cl}{\operatorname{cl}}
\newcommand{\Bd}{\operatorname{Bd}}
\newcommand{\crit}{\operatorname{Crit}}
\newcommand{\Id}{\operatorname{Id}}
\newcommand{\Sd}{\operatorname{Sd}}
\theoremstyle{definition}
\newtheorem{definition}[theorem]{Definition}
\newtheorem{observation}[theorem]{Observation}
\newtheorem{remark}[theorem]{Remark}
\theoremstyle{plain}
\newtheorem{proposition}[theorem]{Proposition}
\title{Combinatorial degree version of a generalized $\mathbb{Z}_p$-Tucker's lemma with a combinatorial proof}
\author[a]{Sajal Mukherjee~\orcidlink{0009-0004-6959-4334}\thanks{\texttt{sajal.mukherjee@tcgcrest.org}}}
\author[b]{Pritam Chandra Pramanik~\orcidlink{0009-0000-8332-3347}\thanks{\texttt{pritam.pramanik.80@tcgcrest.org}}}
\affil[a,b]{\small Institute for Advancing Intelligence (IAI), TCG CREST (affiliated with the Academy of Scientific \& Innovative Research (AcSIR), India), Kolkata -- 700091, West Bengal, India}
\affil[b]{\small Department of Mathematics, National Institute of Technology (NIT) Durgapur, Durgapur -- 713209, West Bengal, India}
\date{}
\begin{document}
	\maketitle
	
	\begin{abstract}
		Combinatorial analogues of classical Borsuk--Ulam-type theorems (e.g., Tucker's lemma, $\mathbb{Z}_p$-Tucker's lemma, etc.) have numerous important applications in combinatorics. In this paper, we formulate a combinatorial degree version of a generalized  $\mathbb{Z}_p$-Tucker's lemma. Our proof is purely combinatorial in the sense that it does not involve homology, cohomology or any other notions from continuous topology. In order to prove the aforementioned degree theorem, as a main technical tool, we prove a Hopf trace-type formula, which is also purely combinatorial and involves no homology. This combinatorial Hopf trace formula is of independent interest.
		
		\textbf{Keywords:} simplicial complex, discrete Morse theory, Tucker's lemma, Hopf trace formula, mapping degree.
		
		\textbf{MSC 2020:} 57Q70 (primary), 05E45, 55U15.
	\end{abstract}

	\section{Introduction}
    Combinatorial formulations of theorems in classical algebraic topology, especially the Borsuk--Ulam, or in general Dold-type results (e.g., Todd and Freund's combinatorial treatment of Tucker's lemma \cite{Freund}, Ziegler's $\mathbb{Z}_p$-Tucker's lemma \cite{Ziegler} etc.) have been fruitful in obtaining clean combinatorial proofs of some of the famous results in combinatorics and graph theory, such as Matousek's proof of Kneser's conjecture~\cite{Matousek}, and some of its far reaching generalizations due to Ziegler~\cite{Ziegler}, to name a few. See also \cite{hanke,meunier, simplotopal} for  combinatorial treatments of several Borsuk--Ulam-type theorems and their applications in combinatorics.
    
    In this paper, we formulate a combinatorial degree version of Ziegler's $\mathbb{Z}_p$-Tucker's lemma~\cite{Ziegler} in a more generalized form and provide a purely combinatorial proof. Here we use the term `combinatorial' in the sense of Ziegler~\cite{Ziegler}, i.e., our formulation and proof do not involve homology or any other notions from continuous topology. In order to state the result, we need to borrow some terminologies from discrete Morse theory (see \cite{forman1998, forman2002}). However, it is worth mentioning that throughout the paper we use only some terminologies and not any theorems from discrete Morse theory. 
    
    	Let $\K$ be a $d$-dimensional simplicial complex. A \emph{discrete vector field} $\V$ on  $\K$ is a collection of ordered pairs of simplices of the form $(\alpha,\beta)$, such that (i) $\alpha$ is a facet of $\beta$, and (ii) each simplex of $\K$ is in \emph{at most} one pair of $\V$.
    
    \begin{definition}[$\V$-trajectory]
        Let $\K$ be a $d$-dimensional simplicial complex and $\V$ be a discrete vector field on it. Let $\beta_0, \beta_r \in \K$ such that,  $\dim(\beta_0)=\dim(\beta_r)=q$, for some $q\in\{0, \ldots, d\}$. A \emph{$\V$-trajectory} (or simply, a \emph{trajectory}, when the discrete vector field is clear from context)
    	from $\beta_0$ to $\beta_r$, is an alternating sequence of $q$ and $(q-1)$ dimensional simplices,
    	\[P:\beta_0^{(q)}, \alpha_1^{(q-1)}, \beta_1^{(q)}, \ldots, \alpha_r^{(q-1)},
    	\beta_r^{(q)},\]
    	such that, for each $i \in \{1,\ldots,r\}$, $(\alpha_i,\beta_i) \in
    	\V$, and $\beta_{i-1} (\neq \beta_i) \supsetneq \alpha_i$.
    \end{definition}	
    The simplices $\beta_0$ and $\beta_r$ are called the \emph{initial simplex} (denoted by $\inn(P)$) and \emph{terminal simplex} (denoted by $\trr(P)$) of the trajectory $P$, respectively.

    The \emph{weight} $P$ is defined by,
    \[w(P):=\prod_{i=1}^{r}(-[ \beta_{i-1},\alpha_i] [ \beta_i, \alpha_i ]),\]
    
    where, for any $\sigma, \tau \in \K$, $[ \sigma,\tau]$ is the incidence number between between $\sigma$ and $\tau$. When $r=0$, $P$ is said to be \emph{trivial}, and its weight is defined to be $1$. The trajectory $P$ is said to be \emph{non-trivial closed} if
    $r > 0$ and $\beta_{r} = \beta_0$.
    \begin{definition} [co-$\V$-trajectory]
    	Let $\K$ be a $d$-dimensional simplicial complex and $\V$ be a discrete vector field on it. Let $\beta_0, \beta_r \in \K$ such that,  $\dim(\beta_0)=\dim(\beta_r)=q$, for some $q\in\{0, \ldots, d\}$.
    	A \emph{co-$\V$-trajectory} (or simply, a \emph{co-trajectory}) from $\beta_0$ to $\beta_r$ is an alternating sequence of $q$ and $(q+1)$ dimensional simplices,
    	\[Q:\beta_0^{(q)}, \tau_1^{(q+1)}, \beta_1^{(q)}, \ldots, \tau_r^{(q+1)},
    	\beta_{r}^{(q)},\]
    	such that, for each $i \in \{1,\ldots,r\}$, $(\beta_i,\tau_i) \in \V$
    	and $\beta_{i-1} (\neq \beta_i)\subsetneq \tau_{i}$.
    \end{definition}

    We now introduce terminology for co-trajectories analogous to that used for trajectories. The simplices $\beta_0$ and $\beta_r$ are defined to be the \emph{initial simplex} (denoted by $\inn(Q)$) and \emph{terminal simplex} (denoted by $\trr(Q)$) of $Q$, respectively. 
    
    The \emph{weight} of $Q$ is defined by,
    \[w(Q):=\prod_{i=1}^{r}(-[ \tau_i, \beta_{i-1}] [ \tau_i
    ,\beta_i ]).\]
    
    When $r=0$, $Q$ is said to be \emph{trivial}, and its weight is defined to be $1$. The co-trajectory $Q$ is said to be \emph{non-trivial closed} if
    $r > 0$ and $\beta_{r} = \beta_0$.
    \begin{definition}[Gradient vector field]
    	A \emph{gradient vector field} $\V$ on a simplicial complex $\K$ is a discrete vector field on $\K$ which does not admit any non-trivial closed $\V$-trajectories.
    \end{definition}
        We remark that a gradient vector field $\V$ is also referred to as an \emph{acyclic
    	matching} (or \emph{pairing}) in literature (see~\cite{chari}), and the property that $\V$ does not admit a non-trivial closed $\V$-trajectory is called \emph{acyclicity} of $\V$.
    \begin{definition}[Critical simplex]
    	Let $\K$ be a simplicial complex and $\V$ be a gradient vector field on it. A non empty simplex $\sigma$ is said to be a \emph{$\V$-critical simplex} (or simply \emph{critical simplex}, when the gradient vector field is clear from context) if one of the following holds:
    	\begin{enumerate}[(i)]
    		\item $\sigma$ does not appear in any pair of $\V$, or
    		\item $\sigma$ is a $0$-simplex and $(\emptyset,\sigma) \in \V$.
    	\end{enumerate}
    \end{definition}
    For $0 \leq q \leq d$, the set of all $q$-dimensional $\V$-critical simplices in a simplicial complex $\K$ is denoted by $\crit_q^{(\V)}(\K)$.

    \begin{definition}[Critical chain and co-critical chain] \label{D1.6}
    	Let $\K$ be a $d$-dimensional simplicial complex and $\V$ be a gradient vector field on it. Let $\sigma \in \crit_q^{(\V)}(\K)$, for some $q \in \{0,\ldots,d\}$. Then the \emph{critical chain} ($\overrightarrow{\sigma}^{(\V)}$) and the \emph{co-critical chain} ($\overleftarrow{\sigma}^{(\V)}$), corresponding to $\sigma$, are defined by
    	\[\overrightarrow{\sigma}^{(\V)}:= \sum_{\substack{P: P \text{ is a}\\ \V\text{-trajectory},\\ \inn(P)=\sigma}} w(P)\cdot \trr(P),~\text{and  }~ \overleftarrow{\sigma}^{(\V)}:= \sum_{\substack{Q: Q \text{ is a}\\ \text{co-}\V\text{-trajectory},\\ \inn(Q)=\sigma}} w(Q)\cdot \trr(Q),\]
    	respectively.	
     \end{definition} 
    	A \emph{$d$-dimensional pseudomanifold} is a $d$-dimensional pure simplicial complex (a simplicial complex in which all the maximal simplices are of the same dimension), such that, (i) any $(d-1)$-simplex is contained in exactly two maximal simplices, and
    	(ii) for any two maximal simplices, say $\alpha$ and $\beta$, there exists a sequence of  maximal simplices,
    	$ (\alpha=)~\alpha_0, \alpha_1, \ldots, \alpha_n~(=\beta)$, 
    	such that, for $i\in \{0,\ldots, n-1\}$, $\alpha_i\cap \alpha_{i+1}$ is a $(d-1)$-simplex. 
   
    \begin{definition}[Combinatorial sphere]
    	A $d$-dimensional pseudomanifold $\s$ is said to be a \emph{combinatorial $d$-sphere} (or simply, a \emph{combinatorial sphere}), if there exists a gradient vector field $\V$ on $\s$ having exactly two critical simplices, one of dimension $0$ and one of dimension $d$.
    \end{definition}
    
    Now, we are in a position to state the main results of this paper. Let $p$ be a prime, and  $\s$ be a $\mathbb{Z}_p$-combinatorial sphere, that is, $\s$ is a combinatorial sphere and the group $\mathbb{Z}_p$ acts on $\s$ freely and simplicially. Then we have the following.
    \begin{theorem} \label{t1.7}
    	Let $\s$ be a $\mathbb{Z}_p$-combinatorial sphere, and $\Bd^k(\s)$ be the $k$-th barycentric subdivision of $\s$. Then, for any  $\mathbb{Z}_p$-equivariant simplicial map, $f: \operatorname{Bd}^{k}(\s) \rightarrow \s $, 
    	\[\deg(f)\equiv  1 \pmod{p}.\] 
    \end{theorem}
    In the above theorem, $\deg(f)$ is a direct combinatorial analogue (see Definition~\ref{d4.6}) of mapping degree in topology.
    
    Now let us state Ziegler's $\mathbb{Z}_p$-Tucker's lemma (see \cite{Ziegler}) in a slightly generalized form. Let $m\in \mathbb{N}$, and $\s_1, \s_2, \ldots, \s_m$ be $\mathbb{Z}_p$-combinatorial spheres (not necessarily of the same dimension). Let $x*\Bd^k(\s_1*\s_2*\ldots\s_m)$ be the cone over $\Bd^k(\s_1*\s_2*\ldots\s_m)$ with the apex $x$. Here `$*$' denotes the simplicial join (see Definition~\ref{join}). Then the $\mathbb{Z}_p$-Tucker's lemma essentially goes as follows.
     \begin{theorem}[Generalized $\mathbb{Z}_p$-Tucker's lemma] \label{t1.8}
    	Let $m\in \mathbb{N}$, and $\s_1, \s_2, \ldots, \s_m$ be $\mathbb{Z}_p$-combinatorial spheres. Then, there does not exists any simplicial map $f: x* \Bd^k(S_1* \cdots *S_m) \rightarrow S_1 * \cdots *S_m$, such that,
    	\[f\big|_{\Bd^k(S_1* \cdots *S_m)}:\Bd^k(S_1* \cdots S_m) \rightarrow S_1 * \cdots *S_m\]
    	is $\mathbb{Z}_p$-equivariant.
    \end{theorem}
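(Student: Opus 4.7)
The strategy is to derive Theorem~\ref{t1.8} as a direct corollary of Theorem~\ref{t1.7} by a degree obstruction: a map that extends across the cone must have combinatorial degree zero, contradicting the conclusion $\deg\equiv 1\pmod p$ of Theorem~\ref{t1.7}.

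Suppose, toward a contradiction, that a simplicial map $f:x*\Bd^k(\s_1*\cdots*\s_m)\to \s_1*\cdots*\s_m$ exists as in the hypothesis. Put $\s:=\s_1*\cdots*\s_m$ and $d:=\dim\s$. The first step is to verify that $\s$ is itself a $\mathbb{Z}_p$-combinatorial sphere. The diagonal $\mathbb{Z}_p$-action is free and simplicial on a join of free $\mathbb{Z}_p$-simplicial complexes, and a gradient vector field on $\s$ with exactly one critical vertex and one critical $d$-simplex can be obtained from the given gradient vector fields $\V_i$ on $\s_i$ via the standard join construction for discrete Morse functions (pair off every non-critical face on each join-coordinate, and retain only the join of the critical $0$-cell and the critical top-cell as the two critical simplices of $\s$). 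Granted this, set $g:=f\bigl|_{\Bd^k(\s)}:\Bd^k(\s)\to \s$; it is a $\mathbb{Z}_p$-equivariant simplicial map between $\mathbb{Z}_p$-combinatorial spheres of dimension $d$, so Theorem~\ref{t1.7} yields $\deg(g)\equiv 1\pmod p$, and in particular $\deg(g)\ne 0$.

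To finish I would show that the extension of $g$ across the apex forces $\deg(g)=0$, which is the desired contradiction. Let $v:=f(x)$. For any $d$-simplex $\sigma\in\Bd^k(\s)$ on which $g$ is injective on vertices, $g(\sigma)$ is a $d$-simplex of $\s$; the cone-face $x*\sigma$ is then sent by $f$ to $\{v\}\cup g(\sigma)$, which must be a simplex of $\s$, and since $\dim\s=d$ this is only possible when $v\in g(\sigma)$. Hence every $d$-simplex of $\s$ in the image of $g$ contains $v$. Now pick any $d$-simplex $\tau$ of $\s$ with $v\notin\tau$ — such a $\tau$ exists because a positive-dimensional combinatorial sphere cannot be the simplicial cone on any of its vertices (if it were, it would have non-empty pseudomanifold boundary, violating the sphere condition). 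Then no $d$-simplex of $\Bd^k(\s)$ maps onto $\tau$, so the combinatorial signed count from Definition~\ref{d4.6}, evaluated with $\tau$ as the reference top-face, gives $\deg(g)=0$. This is incompatible with $\deg(g)\equiv 1\pmod p$, and Theorem~\ref{t1.8} follows.

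The main obstacle is Step~1 — verifying that the join $\s_1*\cdots*\s_m$ genuinely satisfies the definition of a $\mathbb{Z}_p$-combinatorial sphere, i.e.\ that the join construction for gradient vector fields respects the diagonal $\mathbb{Z}_p$-action and produces exactly two critical simplices — together with the clean invocation of Definition~\ref{d4.6} needed to read off $\deg(g)=0$ from the absence of preimages of $\tau$. Once these ingredients are in place, the degree contradiction is immediate.
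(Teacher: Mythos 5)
Your proposal is correct and proceeds by the same high-level strategy as the paper (degree obstruction: extending across the cone forces degree zero, contradicting Theorem~\ref{t1.7}), but the mechanism you use to establish $\deg(g)=0$ is genuinely different from the paper's. The paper proves a Cone Lemma (Lemma~\ref{A6}), a purely chain-level fact: for any simplicial map $\phi:x*\s\to\T$ with $\dim\T\le d$, one has $\phi_d([\s_\xi])=0$, because $[\s_\xi]$ is exhibited as $\partial_{d+1}$ of $\sum_i\xi_i\,(x*\sigma_i)$ and $\phi_{d+1}$ lands in $C_{d+1}(\T)=0$. You instead argue geometrically: if $g(\sigma)$ is a nondegenerate top face, then $f(x*\sigma)=\{v\}\cup g(\sigma)$ must still be a simplex of the $d$-dimensional target, forcing $v=f(x)\in g(\sigma)$; picking a top face $\tau$ of $\s$ with $v\notin\tau$ (which exists: take any $d$-simplex through $v$, pass to its facet opposite $v$, and take the other $d$-simplex through that facet) shows the coefficient of $\tau$ in the cycle $g_d([\Bd^k(\s)_\xi])$ vanishes, whence by Proposition~\ref{A1}/Theorem~\ref{A4} the whole cycle vanishes, so $(g^k_d\circ g_d)([\Bd^k(\s)_\xi])=0$ and $\deg(g)=0$. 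Your phrasing \emph{``the combinatorial signed count from Definition~\ref{d4.6}, evaluated with $\tau$ as the reference top-face''} is informal and should be replaced by the explicit appeal to Proposition~\ref{A1}/Theorem~\ref{A4} just described, and the existence of $\tau$ deserves the one-line pseudomanifold argument rather than the vaguer appeal to ``pseudomanifold boundary,'' but these are presentational, not substantive. Both routes ultimately rest on the same ingredients (Proposition~\ref{p4.12} to make the join a combinatorial sphere, Theorem~\ref{t1.7} for the modular constraint, and the cycle structure of a pseudomanifold); the paper's Cone Lemma is slightly more general (any target of dimension $\le d$), whereas yours is a little more geometric and elementary for this particular application.
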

    
    In Section~\ref{s4} (Proposition~\ref{p4.12}) we prove that join of two combinatorial spheres is also a combinatorial sphere. So, Proposition~\ref{p4.12} and Theorem~\ref{t1.7} together lead to the following combinatorial degree version of the generalized $\mathbb{Z}_p$-Tucker's lemma.
    
     \begin{theorem}[Combinatorial degree version of generalized $\mathbb{Z}_p$-Tucker's lemma] \label{t1.9} Let $m \in \mathbb{N}$,
    	and $\s_1, \s_2, \ldots, \s_m$ be $\mathbb{Z}_p$-combinatorial spheres. Then, for any $\mathbb{Z}_p$-equivariant simplicial map 
    	$f: \Bd^{k}(\s_1 * \s_2 * \cdots *\s_m) \rightarrow (\s_1 * \s_2 * \cdots *\s_m),$
    	\[\deg(f)\equiv  1 \pmod{p}.\] 
    \end{theorem}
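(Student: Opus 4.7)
The plan is to reduce Theorem~\ref{t1.9} to the one-sphere case handled by Theorem~\ref{t1.7}, by first upgrading the iterated join $\s_1 * \s_2 * \cdots * \s_m$ to a $\mathbb{Z}_p$-combinatorial sphere and then applying Theorem~\ref{t1.7} directly to $f$. Granting Proposition~\ref{p4.12} (join of two combinatorial spheres is a combinatorial sphere), a straightforward induction on $m$ yields that $\s_1 * \cdots * \s_m$ is itself a combinatorial sphere.

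Next I would equip this join with the diagonal $\mathbb{Z}_p$-action. Each non-empty simplex of $\s_1 * \cdots * \s_m$ decomposes uniquely as $\sigma = \sigma_1 \cup \sigma_2 \cup \cdots \cup \sigma_m$, with each $\sigma_i$ either empty or a simplex of $\s_i$, and one sets
\[g \cdot (\sigma_1 \cup \cdots \cup \sigma_m) := (g\sigma_1) \cup \cdots \cup (g\sigma_m), \quad g \in \mathbb{Z}_p.\]
This is simplicial by construction. For freeness: if a non-identity $g$ fixed $\sigma$ setwise, then since the vertex sets of the factors $\s_i$ are disjoint in the join, $g$ would fix each $\sigma_i$ setwise; as at least one $\sigma_i$ is non-empty and $\mathbb{Z}_p$ acts freely on that $\s_i$ by hypothesis, this is a contradiction. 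Hence $\s_1 * \cdots * \s_m$ becomes a $\mathbb{Z}_p$-combinatorial sphere, and the hypothesized map $f$ is a $\mathbb{Z}_p$-equivariant simplicial self-map of the form required by Theorem~\ref{t1.7}, which then gives $\deg(f) \equiv 1 \pmod p$.

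The main obstacle in this plan does not lie in the reduction above — which is essentially a freeness-of-diagonal-action verification — but in the two inputs it relies on. Theorem~\ref{t1.7} contains the substantive combinatorial degree computation (via the Hopf trace type formula announced in the abstract), and Proposition~\ref{p4.12} requires an explicit construction of a gradient vector field with exactly two critical simplices on a join, assembled from such vector fields on the factors. Given both ingredients as established, the deduction of Theorem~\ref{t1.9} is immediate.
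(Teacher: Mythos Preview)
Your proposal is correct and follows essentially the same route as the paper: the paper's proof of Theorem~\ref{t1.9} is simply ``By Proposition~\ref{p4.12}, $\s_1 * \s_2 * \cdots * \s_m$ is a $\mathbb{Z}_p$-combinatorial sphere, and the theorem follows from Theorem~\ref{t1.7}.'' You have additionally spelled out the induction on $m$ and the freeness check for the diagonal action, both of which the paper leaves implicit.
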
    
    In Appendix~\ref{Appendix}, we show that Theorem~\ref{t1.8} follows form Theorem~\ref{t1.9}.
    
    As the main technical tool to prove our main result (Theorem~\ref{t1.7}), we develop the following combinatorial version of Hopf trace formula, which is of independent interest.
    
     \begin{theorem}[Combinatorial Hopf trace formula]\label{t1.10}
    	Let $\K$ be a $d$-dimensional simplicial complex with a gradient vector field $\V$  on it. Let, for any $q$, with $0\leq q\leq d$, and any two $q$-chains $\xi_1, \xi_2\in C_q(\K, \mathbb{Z})$, such that $\xi_1= \sum_{i=1}^n s_i\sigma_i$ and $\xi_2= \sum_{i=1}^n t_i\sigma_i$, where $\{\sigma_1,\sigma_2, \ldots, \sigma_n\}$ is the set of all $q$-dimensional simplices of $\K$, the inner product $\langle \xi_1,\xi_2\rangle$ be $\sum_{i=1}^n s_it_i$. Let $\phi_\#:C_\#(\K,\mathbb{Z}) \rightarrow C_\#(\K,\mathbb{Z})$ be a chain map. Then
    	\begin{equation} \label{eq-main}
    		\sum_{q=0}^d (-1)^q \operatorname{tr}(\phi_q)=\sum_{q=0}^d (-1)^q \sum_{\sigma\in \crit_q^{(\V)}(\K)} \langle \overleftarrow{\sigma}^{(\V)},\phi_q(\overrightarrow{\sigma}^{(\V)})
    		\rangle.
    	\end{equation}
    \end{theorem}
    
    It is worth mentioning that, this is a purely combinatorial identity and unlike the Hopf trace formula (see \cite{bredon, munkres}), Theorem~\ref{t1.10} does not involve any homology and holds for any coefficient ring $R$, not just fields (although, throughout this paper we use $\mathbb{Z}$ as the coefficient ring, our proof of Theorem~\ref{t1.10} works for any ring as well). 
    
	\section{Preliminaries}
	A \textit{directed graph} (or \textit{digraph}) is an orderd pair $G=(V,E)$, where $V$ is a finite nonempty set, and $E\subseteq\{(a,b): a,b\in V \text{ and }a\neq b\}$. The set $V$ is called the \textit{vertex set} of the digraph $G$, and the elements of the set $E$ are called \emph{directed edges} (or simply, \emph{edges}). A \textit{directed cycle} in $G$ is a sequence of vertices, $a_1,a_2,\ldots, a_n$, where each $a_i$ is distinct for $i\in \{1,\ldots,n-1\}$, and $a_n=a_1$, and for each $i\in \{1,\ldots,n-1\}$, $(a_i,a_{i+1})\in E$. A digraph is said to be a \text{directed acyclic graph} if it does not contain any directed cycle. A \textit{topological ordering} of a digraph is a linear ordering of its vertices such that for each directed edge $(a,b)\in E$, $a$ comes before $b$ in the ordering. We have the following well-known theorem.
	\begin{theorem}[Topological sorting \cite{cormen}] \label{t-tp}
		Every directed acyclic graph has a topological ordering, and vice-versa. 
	\end{theorem}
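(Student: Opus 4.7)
The plan is to prove Theorem~\ref{t-tp} by induction on the number of vertices $n=|V|$, using the fact that every finite acyclic digraph has a \emph{source} (a vertex of in-degree $0$). Once the source lemma is in hand, the inductive step is routine: list the source first, delete it, topologically order the remainder by the inductive hypothesis, and concatenate.

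The base case $n=1$ is immediate, since a single vertex is trivially a topological ordering of itself. For the inductive step, assume the result for all acyclic digraphs on fewer than $n$ vertices, and let $G=(V,E)$ be an acyclic digraph with $|V|=n$. First I would establish the source lemma: pick any vertex $v_0\in V$, and inductively, while $v_i$ has a predecessor, choose $v_{i+1}$ with $(v_{i+1},v_i)\in E$. Since $V$ is finite this process must terminate, and I would argue that it cannot terminate by revisiting a previous vertex $v_j=v_i$ with $j<i$, for that would yield the directed cycle $v_i,v_{i-1},\ldots,v_j=v_i$ (after reversing the listing), contradicting acyclicity of $G$. Hence the process terminates at some $v_t$ with no incoming edge; this $v_t$ is a source of $G$. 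Removing $v_t$ yields an acyclic digraph $G'$ on $n-1$ vertices (acyclicity is clearly preserved by vertex deletion), to which the inductive hypothesis applies; I would then place $v_t$ in front of the topological ordering of $G'$. Any edge out of $v_t$ goes to a vertex of $G'$, which appears later in the concatenated list, and all edges internal to $G'$ already respect the order from the inductive hypothesis; no edge enters $v_t$ since $v_t$ is a source. Therefore the concatenated list is a topological ordering of $G$.

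The main obstacle is really the source lemma, i.e., ruling out infinite backward traversal without acyclicity being violated; the rest is bookkeeping. An equivalent and equally clean alternative would be to track a sink (vertex of out-degree $0$) by walking forward instead, and prepend the topological ordering of $G\setminus\{v\}$ to $v$; I mention this only because it is the viewpoint used in the depth-first-search proof in \cite{cormen}, but the backward-walk argument above is the most self-contained route and requires no auxiliary data structures.
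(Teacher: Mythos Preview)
Your proof is correct. However, note that the paper does not actually prove Theorem~\ref{t-tp}: it is stated as a well-known result with a reference to \cite{cormen} and used as a black box (in the proof of Lemma~\ref{l-basis}). So there is no ``paper's own proof'' to compare against; your induction-on-$|V|$ argument via the source lemma is a standard and complete justification, and would serve perfectly well if the paper had chosen to include one.
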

	 
	An \emph{abstract simplicial complex} $\K$ (or in short, a \emph{simplicial complex}) is a finite non-empty collection of finite sets with the property that if $\sigma \in \K$ and $\tau \subseteq \sigma$, then $\tau \in \K$. Note that, the empty set $\emptyset$ always belongs to $\K$. 
	
	 An element in $\K$ is called a \emph{simplex}. The \text{dimension} of a simplex $\sigma$ is defined as, $\dim(\sigma):=|\sigma|-1$. If $\dim(\sigma)=q$, we call $\sigma$ a $q$-\emph{simplex}. We often denote a $q$-simplex $\sigma$ by $\sigma^{(q)}$. The dimension of a simplicial complex $\K$ is defined as, $\dim(\K):=\max\{\dim(\sigma):\sigma\in\K\}$. For $\tau,\sigma \in \K$, if $\tau \subseteq \sigma$, then $\tau$ is called a \emph{face} of $\sigma$. If $\tau$ is a face of $\sigma$ with $\dim({\tau})=\dim({\sigma})-1$, then $\tau$ is said to be a \emph{facet} of $\sigma$. The \emph{vertex set} of $\K$, denoted by $V(\K)$, is defined as, $V(\K)= \cup_{\sigma \in \K} ~\sigma$. For simplicity, any $v\in V(\K)$, we represent the simplex $\{v\}$ as $v$. A simplex in a simplicial complex is said to be \emph{maximal}, if it is not contained in any other simplex. Note that, for a maximal simplex $\sigma \in \K$, $\K \setminus \{\sigma \}$ is also a simplicial complex. A simplicial complex is called \emph{pure}, if all of its maximal simplices are of the same dimension. A subset $\K^\prime$ of $\K$ is called a \emph{subcomplex} of $\K$ if $\K^\prime$ is also a simplicial complex. For a simplex $\sigma \in \K$, the \emph{closure of $\sigma$}, denoted by $\cl(\sigma)$, is defined as $\cl(\sigma):= \{\tau \in \K: \tau \subseteq \sigma\}$. Note that, $\cl(\sigma)$ is a simplicial complex itself.
	 
	  For, $n\in \mathbb{N}$, $\Delta^n$ is the simplicial complex which contains all the subsets of a set of cardinality $n$. For some non-negative integer $q$ with $0\leq q \leq n$, the $q$-skeleton of $\Delta^n$ is defined as $\Delta^n_q:= \{\sigma \in \Delta^n : \dim(\sigma) \leq q\}$.
	 Note that, $\Delta^n_q$ is a subcomplex of $\Delta^n$.

	 \begin{definition}[Join of two simplicial complexes] \label{join}
	 	Let $\K_1$ and $\K_2$ be two simplicial complexes, such that $V(\K_1)\cap V(\K_2)=\emptyset$. Then, the \emph{join} of $\K_1$ and $\K_2$, denoted by $\K_1*\K_2$, is the simplicial complex defined as,
	 	\[\K_1*\K_2:=\{\sigma \sqcup \tau : \sigma \in \K_1 \text{ and } \tau \in \K_2\}.\]
	 \end{definition}
        Note that, if $\dim(\K_1)=m$ and $\dim(\K_2)=n$, then $\dim(\K_1 * \K_2)= m+n+1$. 
    \begin{definition}[Cone over a simplicial complex]
    	Let $\K$ be a simplicial complex and $x \notin V(\K)$. Then,  the \emph{cone} over $\K$ (with the apex $x$), is defined to be the simplicial complex $\{\emptyset, \{x\}\}*\K$, and denoted by $x*\K$. 
    \end{definition}
        
     \begin{definition}[Barycentric subdivision of a simplicial complex] The
    	barycentric subdivision of a simplicial complex $\K$, denoted by $\Bd(\K)$, is the simplicial complex whose vertices correspond to the non-empty simplices of $\K$, and simplices of $\Bd{(\K)}$ correspond to the chains (under set inclusion) of non-empty simplices of $\K$. In other words, for each $q\geq 0$, the set of all $q$-simplices of  $\Bd(\K)$ are as follows.
    	\[S_q(\Bd(\K)):=\{\{v_{\sigma_0},\ldots,v_{\sigma_q}\}: \sigma_i \in \K \setminus \{\emptyset\} \text{ for all } i\in \{1,\ldots, q\}, \text{ and } \sigma_0 \subsetneq \sigma_1 \subsetneq \cdots \subsetneq \sigma_q\}.\] 
    \end{definition}
    The $k$-th barycentric subdivision of $\K$, denoted by $\Bd^k(\K)$, is defined recursively as  $\Bd^k(\K):=\Bd(\Bd^{k-1}(\K))$, with $\Bd^0{(\K)}=\K$.
	\begin{definition}[Simplicial map]
		Let $\K_1$ and $\K_2$ be two simplicial complexes. A \emph{simplicial map} $f$ from $\K_1$ to $\K_2$ is a function $f: V(\K_1) \rightarrow V(\K_2)$, such that, for any $\sigma \in \K_1$, $f(\sigma) \in \K_2$, where, for any simplex $\sigma=\{v_1,\ldots,v_n\}$ in $\K_1$, $f(\sigma)$= $\{f(v_1),\ldots,f(v_n)\}$.
	\end{definition}

	An \emph{orientation} of a simplex is an ordering of its vertices, with two orderings defining the same orientation if and only if they differ by an even permutation. A simplex with an orientation is said to be a \emph{oriented simplex}. We denote an oriented $q$-simplex $\sigma$ consisting of the vertices $x_{i_0}, x_{i_1}, \ldots , x_{i_q}$, with $i_0 < i_1 < \ldots < i_q$, by $(x_{i_0}~x_{i_1} \ldots x_{i_k})$.

	\begin{definition}[Incidence number between simplices]
		Let $\sigma ^{(q)}=(x_0~x_1\ldots x_q)$ and $\tau^{(q-1)}$ are oriented simplices of a simplicial complex $\K$. Then the \emph{incidence number} between $\sigma$ and $\tau$ is defined by,
		\[[\sigma^{(q)},\tau^{(q-1)}]: =
		\begin{cases}
			(-1)^i, & \text{ if } \tau\subsetneq \sigma \text{ and } \tau=(x_0\dots\widehat{x}_i\ldots x_{q}), \\ 
			0, & \text{ otherwise },	
		\end{cases}\]
		where
		$(x_0\ldots\widehat{x}_i\ldots x_q)$ is the oriented $(k-1)$-simplex
		obtained from $\sigma$ after deleting $x_i$.
	\end{definition}
   
	For some $q\in \mathbb{N}$, let $S_q(\K)$ (or simply, $S_q$, when the simplicial complex is clear from context) be the set of all $q$-simplices of $\K$ and $C_q(\K,\mathbb{Z})$ be the free $\mathbb{Z}$-module generated by $S_q(\K)$. An element of $C_q(\K,\mathbb{Z})$ is called \emph{$q$-chain} (or simply, a \emph{chain}). If $\K$ is a $d$-dimensional simplicial complex, then $C_q(\K,\mathbb{Z})=\{0\}$, whenever $q<0$ or $q>d$. We define the \emph{$q$-th boundary map}  $\partial_q:C_{q}(\K,\mathbb{Z})\rightarrow C_{q-1}(\K,\mathbb{Z})$ (upon linearly extending the domain of the following map from $S_q(\K)$ to $C_{q}(\K, \mathbb{Z})$) as follows.
	\[\partial_q(\sigma)=\sum_{\tau\in S_{q-1}(\K)}[\sigma,\tau ] \cdot \tau.\]
	A $q$-chain $\sigma$ is said to be a \emph{$q$-cycle} (or simply, a \emph{cycle}) if $\partial_q(\sigma)=0$. The set of all $q$-cycles is denoted by $Z_q(\K, \mathbb{Z})$.
    The sequence,
    \[\cdots \xrightarrow{\partial_{q+2}} C_{q+1}(\K,\mathbb{Z})\xrightarrow{\partial_{q+1}} C_{q}(\K,\mathbb{Z})\xrightarrow{\partial_q}C_{q-1}(\K,\mathbb{Z}) \xrightarrow{\partial_{q-1}} \cdots\]
	is called \emph{simplicial chain complex} of $\K$, and denoted by $(C_\#{(\K,\mathbb{Z})},\partial_\#)$.

	We recall that, a  chain map $\phi_\#$ between two chain complexes $(M_\#,\partial^M_{\#})$ and
	 $(N_\#,\partial^N_{\#})$ is a sequence of homomorphisms $\phi_q:M_q\rightarrow N_q$,   such that, $\partial^N_{q} \circ  \phi_q=\phi_{q-1} \circ \partial^M_{q}$, for
	 all $q\in \mathbb{N}$. That means, the following diagram commutes.
	 \[\begin{tikzcd}
	 		\cdots && {M_{q+1}} && {M_q} && {M_{q-1}} && \cdots \\
	 		\\
	 		\cdots && {N_{q+1}} && {N_{q}} && {N_{q-1}} && \cdots
	 		\arrow["{\partial^M_{q+2}}", from=1-1, to=1-3]
	 		\arrow["{\partial^M_{q+1}}", from=1-3, to=1-5]
	 		\arrow["{\phi_{q+1}}", from=1-3, to=3-3]
	 		\arrow["{\partial^M_{q}}", from=1-5, to=1-7]
	 		\arrow["{\phi_q}", from=1-5, to=3-5]
	 		\arrow["{\partial^M_{q-1}}", from=1-7, to=1-9]
	 		\arrow["{\phi_{q-1}}", from=1-7, to=3-7]
	 		\arrow["{\partial^N_{q+2}}", from=3-1, to=3-3]
	 		\arrow["{\partial^N_{q+1}}", from=3-3, to=3-5]
	 		\arrow["{\partial^N_{q}}", from=3-5, to=3-7]
	 		\arrow["{\partial^N_{q-1}}", from=3-7, to=3-9]
	 	\end{tikzcd}\]
	 Let $f:\K_1 \rightarrow \K_2$ be a simplicial map. For each $q\in \mathbb{N}$, we can define homomorphisms $f_q:C_q(\K_1, \mathbb{Z})\rightarrow C_q(\K_2,\mathbb{Z})$ as follows. 
	 \[f_q(\sigma)=
	 \begin{cases}
	 	f(\sigma), & \text{ if }\dim(f(\sigma))=\dim(\sigma),\\
	 	0, &  \text{ if }\dim(f(\sigma))<\dim(\sigma),
	 \end{cases}\]
 where $\sigma \in S_q(\K_1)$. Moreover, the following diagram is commutative.
 	\[\begin{tikzcd}
 	\cdots && {C_{q+1}(\K_1,\mathbb{Z})} && {C_q(\K_1,\mathbb{Z})} && {C_{q-1}(\K_1,\mathbb{Z})} && \cdots \\
 	\\
 	\cdots && {C_{q+1}(\K_2,\mathbb{Z})} && {C_{q}(\K_2,\mathbb{Z})} && {C_{q-1}(\K_2,\mathbb{Z})} && \cdots
 	\arrow["{\partial_{q+2}}", from=1-1, to=1-3]
 	\arrow["{\partial_{q+1}}", from=1-3, to=1-5]
 	\arrow["{f_{q+1}}", from=1-3, to=3-3]
 	\arrow["{\partial_q}", from=1-5, to=1-7]
 	\arrow["{f_q}", from=1-5, to=3-5]
 	\arrow["{\partial_{q-1}}", from=1-7, to=1-9]
 	\arrow["{f_{q-1}}", from=1-7, to=3-7]
 	\arrow["{\partial_{q+2}}", from=3-1, to=3-3]
 	\arrow["{\partial_{q+1}}", from=3-3, to=3-5]
 	\arrow["{\partial_{q}}", from=3-5, to=3-7]
 	\arrow["{\partial_{q-1}}", from=3-7, to=3-9]
 \end{tikzcd}\]

  Hence, any simplicial map $f:\K_1 \rightarrow \K_2$ induces a chain map $f_\#:C_\#(\K_1,\mathbb{Z})\rightarrow C_\#(\K_2,\mathbb{Z})$.
  
   For $k \in \mathbb{N}$, there is a well-known chain map $\Sd^k_\#: C_\#(\s,\mathbb{Z}) \rightarrow C_\#(\Bd^k(\s),\mathbb{Z})$, called the \emph{$k$-th subdivision map}. We refer to \cite{bredon} and \cite{munkres} for details. 

  We now define \emph{inner product} of chains. Let $\K$ be a $d$-dimensional simplicial complex. For some $0\leq q \leq d$, let $B= \{b_1,b_2,\ldots b_m\}$ be a $\mathbb{Z}$-basis of $C_q(\K,\mathbb{Z})$. Let $\xi_1,\xi_2 \in C_q(\K, \mathbb{Z})$ be two chains, such that $\xi_1= \sum_{i=1}^m s_i b_i$ and  $\xi_2= \sum_{i=1}^m s_i^\prime b_i$  (here $s_i, s_i^\prime \in \mathbb{Z}$, for $i\in \{1,2, \ldots,m\}$). We define the \emph{inner product} of $\xi_1$ and $\xi_2$, with respect to the basis $B$, as
  $\langle \xi_1,\xi_2 \rangle_{B}:= \sum_{i=1}^m s_is^\prime_i$.
  When $B=S_q$ (the standard $\mathbb{Z}$-basis of $C_q{(\K, \mathbb{Z})}$), we often omit the subscript $B$ from  $\langle \xi_1,\xi_2 \rangle_{B}$, and simply write it as $\langle \xi_1,\xi_2 \rangle$. Note that, for any $\xi \in C_q(\K,\mathbb{Z})$, $\langle \xi,b_i \rangle_B$ is the coefficient of $b_i$ in $\xi$ with respect to the basis $B$. For any chain $\xi \in C_q(\K,\mathbb{Z})$, we define $\operatorname{support}(\xi):= \{\sigma \in S_q(\K): \langle \xi, \sigma \rangle \neq 0 \}$.
  
  We recall the following criteria of replacing an element of a $\mathbb{Z}$-basis to obtain a new $\mathbb{Z}$-basis. 
   \begin{theorem}[Replacement theorem for free $\mathbb{Z}$-modules] \label{treplacement} Let $C$ be a free $\mathbb{Z}$-module and $B=\{b_1, \ldots, b_m\}$ be a $\mathbb{Z}$-basis of $C$. Let $\xi\in C$, such that, $\xi= \sum_{i=1}^{m} s_i b_i$, where $s_i \in \mathbb{Z}$ for all $i \in\{1,\ldots,m\}$. If for some $i \in \{1,\ldots,m\}$, $s_i=\pm 1$ then $B'=\{b_1,\ldots, b_{i-1},\xi, b_{i+1}, \ldots, b_m\}$ is also a $\mathbb{Z}$-basis of $C$.
  \end{theorem}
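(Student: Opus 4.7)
The plan is to verify directly that $B' = \{x_1, \ldots, x_{i-1}, \xi, x_{i+1}, \ldots, x_k\}$ is simultaneously a spanning set and a $\mathbb{Z}$-linearly independent subset of $C$. Before doing so, I would first interpret the hypothesis: the condition ``$x_i=\pm 1$'' as stated must be read as $s_i = \pm 1$, since $x_i$ is a basis vector rather than a scalar, and the invertibility of the coefficient of $x_i$ in the expansion of $\xi$ is precisely what makes the replacement possible.

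For spanning, the key observation is that from $\xi = \sum_{j=1}^{k} s_j x_j$ together with $s_i \in \{\pm 1\}$, one can algebraically solve for $x_i$ as
\[
x_i \;=\; s_i^{-1}\Bigl(\xi - \sum_{j \neq i} s_j x_j\Bigr) \;=\; \pm\Bigl(\xi - \sum_{j \neq i} s_j x_j\Bigr).
\]
This exhibits $x_i$ as an integer linear combination of elements of $B'$, and since every other member of $B$ already lies in $B'$, the $\mathbb{Z}$-span of $B'$ contains $B$ and hence equals $C$.

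For $\mathbb{Z}$-linear independence, I would begin with a putative dependence relation $t\,\xi + \sum_{j \neq i} t_j x_j = 0$ with $t, t_j \in \mathbb{Z}$, substitute the defining expression of $\xi$, and regroup by the basis $B$ to obtain
\[
t s_i\, x_i \;+\; \sum_{j \neq i} (t_j + t s_j)\, x_j \;=\; 0.
\]
Linear independence of $B$ forces every coefficient to vanish. From $t s_i = 0$ with $s_i = \pm 1$ one extracts $t = 0$, and the remaining equations then give $t_j = 0$ for all $j \neq i$.

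A cleaner, more conceptual route would pass through the change-of-basis matrix $M$ whose $j$-th column records the coordinates of the $j$-th element of $B'$ in terms of $B$: this matrix is the $k \times k$ identity except in its $i$-th column, which equals $(s_1, \ldots, s_k)^T$, so $\det(M) = s_i$, and $B'$ is a $\mathbb{Z}$-basis iff $M \in \mathrm{GL}_k(\mathbb{Z})$ iff $\det(M) = \pm 1$. The only real obstacle is correctly parsing the hypothesis as a condition on the scalar coefficient $s_i$ rather than on the basis vector $x_i$; once that reading is fixed the argument reduces to routine bookkeeping, and I would present the direct verification as the main proof since it is self-contained and avoids invoking any determinant theory over $\mathbb{Z}$.
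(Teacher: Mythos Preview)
Your proof is correct, including the necessary reinterpretation of the hypothesis as $s_i = \pm 1$. The paper itself does not prove this theorem: it is stated there as a ``basic result'' and used without proof, so there is no argument to compare against. Your direct verification of spanning and independence is the standard one, and the determinant remark is a fine alternative.
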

 
Let $\K$ be a $d$-dimensional simplicial complex and $\V$ be a gradient vector field on it. If $(\alpha^{(q-1)}, \beta^{(q)}) \in \V$, for some $q \in \{0,\ldots, d-1\}$, then sometimes we pictorially represent it as $\alpha \rightarrowtail \beta$ (or $\beta \leftarrowtail \alpha$). Then a $\V$-trajectory,  \[P:\beta_0^{(q)}, \alpha_1^{(q-1)}, \beta_1^{(q)}, \ldots, \alpha_r^{(q-1)},
	\beta_r^{(q)}\]
	is notationally represented as follows.
	
	\[P:\beta_0^{(q)}\rightarrow \alpha_1^{(q-1)}\rightarrowtail
	\beta_1^{(q)}\rightarrow \cdots\rightarrow \alpha_r^{(q-1)}\rightarrowtail
	\beta_r^{(q)}\]
	(here $``\rightarrow"$ represents \emph{set inclusion}).
	Similarly, a co-$\V$-trajectory,

		\[Q:\beta_0^{(q)}, \tau_1^{(q+1)}, \beta_1^{(q)}, \ldots, \tau_r^{(q+1)},
		\beta_{r}^{(q)},\]
	is represented as,
	\[Q:\beta_0^{(q)} \leftarrow \tau_1^{(q+1)}\leftarrowtail
	\beta_1^{(q)}\leftarrow \cdots\leftarrow \tau_r^{(q+1)}\leftarrowtail
	\beta_{r}^{(q)}.\]
     
	If $(\alpha, \beta) \in \V$, we define $\V(\alpha):= \beta$. We now make the following important observations.

    \begin{observation} \label{Ob2.13} Let $\K$ be a simplicial complex and $\V$ be a gradient vector field on it. Then, we have the following observations.
    	\begin{enumerate}[(i)]
    		\item Any subset $\W$ of $\V$ is also a gradient vector field on $
    		\K$.
    		
    		\item  Let, $(\alpha, \beta) \in \V$. Then, then the trivial trajectory $P:\beta$ is the unique trajectory, such that $\inn(P)=\beta$ and $\alpha \subsetneq \trr(P)$, for otherwise it violates the acyclicity $\V$.
    		
    		\item Let $(\alpha, \beta) \in \V$, and $P$ be any non-trivial $\V$-trajectory starting from $\beta$. Then, $(\alpha, \beta) \notin P$, otherwise it violates the acyclicity $\V$. So, for $\W= \V \setminus \{(\alpha, \beta)\}$, we can say that the $\V$-trajectories and the $\W$-trajectories, which starts from $\beta$, are same.
    		
    		\item  For any critical simplex $\sigma$, the trivial trajectory $P: \sigma$, whose weight is $1$, is the only trajectory from $\sigma$ to $\sigma$. So, the coefficient of $\sigma$ in $\overrightarrow{\sigma}^{(\V)}$ (with respect to the basis $S_q$), $\langle  \overrightarrow{\sigma}^{(\V)}, \sigma \rangle =1$.

    	\end{enumerate}
    
    \end{observation}

	Let  $\K$ be a $d$-dimensional simplicial complex and $\V$ be a gradient vector field on it. Let $\crit_q^{(\V)}(\K)$ (or simply, $\crit_q^{(\V)}$, when the simplicial complex is clear from context) be the collection of all $q$-dimensional critical simplices, $U_q^{(\V)}(\K)$ (or simply, $U_q^{(\V)}$) be the set of all $q$-dimensional simplices, which are paired (in $\V$) with a higher dimensional simplex of $\K$, and $D_q^{(\V)}(\K)$ (or simply, $D_q^{(\V)}$) be the set of all $q$-dimensional simplices, which are paired with a lower dimensional simplex of $\K$. Clearly, $S_q(\K)=\crit_q^{(\V)}\sqcup U_q^{(\V)}\sqcup D^{(\V)}_q$. We also denote the set of all $\V$-critical simplices of $\K$ by $\crit^{(\V)}(\K)$ (or simply, $\crit^{(\V)}$), the set of all simplices, which are paired with a higher dimensional simplex of $\K$ by $U^{(\V)}(\K)$ (or simply, $U^{(\V)}$), and the set of all simplices, which are paired with a lower dimensional simplex of $\K$ by $D^{(\V)}(\K)$ (or simply, $D^{(\V)}$). Note that, $\K= \crit^{(\V)}\sqcup U^{(\V)} \sqcup D^{(\V)}$.
    
    \begin{definition}[Simplicial collapse]
	Let $\K_1$ and $\K_2$ be two simplicial complex. If there exists a gradient vector field $\V$ on $\K_1$, such that $\operatorname{Crit}^{(\V)}(\K_1)= \K_2\setminus \{\emptyset\}$, then $\K_2$ is said to be a \emph{simplicial collapse} of $\K_1$.
    \end{definition}
    If $\K_2$ is a simplicial collapse of $\K_1$, then we say $\K_1$ collapses into $\K_2$.
    \begin{definition}[Collapsible simplicial complex]
	A simplicial complex $\K$ is said to be collapsible if there exists a gradient vector field $\V$ on $\K$, such that the only critical simplex of $\K$ corresponding to $\V$ is a $0$-dimensional simplex.  
	\end{definition}
  
  Note that, collapsibility is a transitive property, that is, if $\K_1$ collapses into $\K_2$ and $\K_2$ collapses into $\K_3$, then $\K_1$ collapses into $\K_3$. 
\begin{theorem} \cite{kozlov} \label{T2.21}For any two simplicial complex $\K_1$ and $\K_2$, If $\K_1$ collapses into $\K_2$ then $\operatorname{Bd(\K_1)}$ collapses into $\operatorname{Bd}(\K_2)$.
\end{theorem}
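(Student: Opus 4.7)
The plan is to decompose the collapse $S\searrow K$ into a sequence of elementary free-face collapses, lift each step to the barycentric subdivision individually, and then chain the resulting collapses using the transitivity of collapsibility noted just before the theorem. To decompose, I would induct on $|\V|$: since $\V$ is acyclic, Theorem~\ref{t-tp} provides a topological ordering on the digraph whose vertices are the pairs of $\V$ and whose edges record the existence of non-trivial $\V$-trajectories between (same-dimensional) pairs. Choose $(\alpha,\beta)\in\V$ with $\dim\beta$ maximal among all pairs and, within this stratum, a source in the induced digraph. Using that $K=\crit^{\V}(S)$ is a subcomplex of $S$, the maximality of $\dim\beta$ forces $\beta$ to be a maximal simplex of $S$; the source choice further rules out the one problematic obstruction to freeness of $\alpha$, namely another same-dimensional $\gamma\in D^{\V}(S)$ with $\gamma\supsetneq\alpha$, since any such $\gamma$ would yield a non-trivial trajectory $\gamma\to\alpha\rightarrowtail\beta$ terminating at $\beta$. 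Thus $\alpha$ is a free face of $\beta$ in $S$, and deleting $(\alpha,\beta)$ yields a sub-simplicial-complex $S'$ still containing $K$ with gradient vector field $\V\setminus\{(\alpha,\beta)\}$; induction applies.

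For the elementary case, write $T=S\setminus\{\alpha,\beta\}$, let $v$ be the unique vertex of $\beta\setminus\alpha$, and let $v_\sigma$ denote the vertex of $\Bd(S)$ corresponding to a simplex $\sigma\in S$. The simplices of $\Bd(S)\setminus\Bd(T)$ are exactly the chains $C$ containing $\alpha$ or $\beta$; since $\beta$ is maximal in $S$ and $\alpha$'s only strict superset in $S$ is $\beta$, each such chain falls into exactly one of three classes:
(A) top element $\alpha$ with $\beta\notin C$;
(B) top element $\beta$ with $\alpha\notin C$;
(AB) top element $\beta$ with $\alpha$ immediately below (necessarily, since $\dim\beta=\dim\alpha+1$). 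I would pair A with AB via the bijection $C\leftrightarrow C\cup\{v_\beta\}$, and pair B internally as follows. Observe that $\cl(\beta)\setminus\{\alpha,\beta\}$ equals the cone $v*\partial\alpha$, which is collapsible. Applying the theorem inductively on $\dim\beta$ gives a gradient vector field on $\Bd(\cl(\beta)\setminus\{\alpha,\beta\})$ with a single critical vertex. Lift this field to the Type B chains by prepending $v_\beta$ to each pair (Type B chains are precisely $\{v_\beta\}\cup C_0$ for a chain $C_0$ in $\cl(\beta)\setminus\{\alpha,\beta\}$, allowing $C_0=\emptyset$), and add one further pair absorbing the leftover critical vertex so that no Type B chain remains unpaired.

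To verify that the resulting $\V'$ is a gradient vector field with $\crit^{\V'}(\Bd(S))=\Bd(T)$, I would partition the non-critical simplices into two groups by whether they contain $v_\alpha$: the A$\leftrightarrow$AB matching acts on the first group, the internal B matching on the second, and each group's matching is separately acyclic. The principal obstacle I anticipate is ruling out $\V'$-trajectories that cross between the two groups: a trajectory can step from a Type AB simplex to a Type B simplex by deleting the vertex $v_\alpha$, and one must verify that continuing via the internal B matching cannot eventually return to the A$\leftrightarrow$AB side to form a closed trajectory. This crossing analysis, together with the careful bookkeeping needed for the inductive construction of the internal Type B matching, is the delicate combinatorial heart of the argument; the rest is mostly formal.
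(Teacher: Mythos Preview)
The paper does not actually supply a proof of Theorem~\ref{T2.21}: it is stated in the preliminaries as a background fact and is used later (in Lemma~\ref{l4.3} and Proposition~\ref{P4.4}) without argument. Note also that the statement as printed is garbled (it introduces $T$ but never uses it, and the conclusion reads ``$\Bd(S)$ collapses into $\Bd(S)$''); the intended content, and what the paper uses, is that if $S$ collapses to $K$ then $\Bd(S)$ collapses to $\Bd(K)$. So there is nothing in the paper to compare your proposal against; what follows is an assessment of your argument on its own.

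Your reduction to elementary collapses is sound: the maximality of $\dim\beta$ together with $K=\crit^{\V}(S)$ being a subcomplex does force $\beta$ to be maximal and rules out all strict supersets of $\alpha$ except possibly same-dimensional $\gamma\in D^{\V}$, and the source choice kills those. The elementary step is also correct, and in fact the ``delicate combinatorial heart'' you anticipate is easier than you fear. The crossing between the two groups is one-directional. From a Type~AB simplex $\beta_i$ (which is in $D^{\V'}$), the facet obtained by deleting $v_\beta$ is Type~A and is matched back up to $\beta_i$ itself, so that step is forbidden; the facet obtained by deleting any $v_\sigma$ with $\sigma\subsetneq\alpha$ is again Type~AB, hence in $D^{\V'}$, so no continuation; only deleting $v_\alpha$ gives a valid step, and it lands in Type~B. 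Conversely, every facet of a Type~B simplex is either Type~B or lies in $\Bd(T)$ (hence critical), so one never re-enters A/AB. Therefore no closed trajectory can contain a Type~AB simplex, and acyclicity reduces entirely to the internal Type~B matching, which you obtain by induction on $\dim\beta$ plus one extra pair; that extra pair cannot create a cycle because the only non-critical facet of $\{v_\beta,v_0\}$ is $\{v_\beta\}$, which is matched back to $\{v_\beta,v_0\}$.

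One point to make explicit: your induction is really on $\dim S$, and the inner call is to the complex $\cl(\beta)\setminus\{\alpha,\beta\}$, whose dimension is $\dim\beta-1\le\dim S-1$, so there is no circularity. With that clarified, the proposal is a complete and correct proof of the intended statement.
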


 We need the following well-known lemma (see \cite{scoville}). We include a proof for the sake of completeness.
  \begin{lemma}\label{th-coll}
 	Let $\K$ be a simplicial complex and $x \notin V(\K)$. Then, $x*\K$ collapses into $\K$	if and only if $\K$ is collapsible. 
 \end{lemma}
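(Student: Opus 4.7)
The plan is to prove both directions by constructing explicit gradient vector fields, lifting pairs via $\alpha \mapsto \{x\} \cup \alpha$ from $\K$ up to $x * \K$ for one direction, and descending them back for the other. For the ``if'' direction, suppose $\K$ is collapsible via a gradient vector field $\V$ whose unique critical simplex is a $0$-simplex $v_0$ with $(\emptyset, v_0) \in \V$. Define
\[\V' := \{(\{x\} \cup \alpha, \{x\} \cup \beta) : (\alpha, \beta) \in \V\}\]
on $x * \K$; in particular the pair $(\emptyset, v_0) \in \V$ yields $(\{x\}, \{x\} \cup v_0) \in \V'$. Since every non-empty simplex of $\K$ lies in exactly one pair of $\V$, every cone simplex $\{x\} \cup \sigma$ lies in exactly one pair of $\V'$, while no $\K$-simplex lies in any pair of $\V'$; hence $\crit^{\V'}(x * \K)$ is precisely the set of non-empty $\K$-simplices. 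For acyclicity, every pair of $\V'$ has both components containing $x$, so every simplex on a nontrivial $\V'$-trajectory must contain $x$; deleting $x$ from each simplex of a hypothetical nontrivial closed $\V'$-trajectory would yield a nontrivial closed $\V$-trajectory, contradicting acyclicity of $\V$.

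For the ``only if'' direction, suppose $\V'$ is a gradient vector field on $x * \K$ with $\crit^{\V'}(x * \K) = \K$. An incidence analysis shows the only pairs of $\V'$ that can involve a non-empty $\K$-simplex are of the form $(\emptyset, \{w\})$ with $w \in V(\K)$: two non-empty $\K$-simplices paired together, or a non-empty $\K$-simplex $\sigma$ paired with a cone simplex (the only such possibility being $(\sigma, \{x\} \cup \sigma)$), would both force a non-critical $\K$-simplex, contradicting $\crit^{\V'}(x * \K) = \K$. Since removing such a pair only converts case-(ii) criticality of $\{w\}$ into case-(i) criticality, Observation~\ref{Ob2.13}(i) lets me replace $\V'$ by $\V'' := \V' \setminus \{(\emptyset, \{w\}) : w \in V(\K)\}$, still a gradient vector field on $x * \K$ with the same critical set. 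Now every pair of $\V''$ consists of two cone simplices, and the non-criticality of $\{x\}$ forces $(\{x\}, \{x, v\}) \in \V''$ for a unique $v \in V(\K)$. Defining $\V := \{(\alpha, \beta) : (\{x\} \cup \alpha, \{x\} \cup \beta) \in \V''\}$ on $\K$ gives a valid discrete vector field; its acyclicity follows by lifting any nontrivial closed $\V$-trajectory (via $\sigma \mapsto \{x\} \cup \sigma$) to a nontrivial closed $\V''$-trajectory. The descended pair $(\emptyset, \{v\}) \in \V$ makes $\{v\}$ the unique critical simplex of $\V$ (every other non-empty $\K$-simplex is paired with another non-empty $\K$-simplex in $\V$), so $\K$ is collapsible.

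The principal obstacle is the asymmetric role of $\emptyset$ in the definition of critical simplex: a valid $\V'$ witnessing that $x * \K$ collapses into $\K$ may contain ``redundant'' pairs $(\emptyset, \{w\})$ with $\{w\} \in \K$ which would clash with the pair $(\emptyset, \{v\})$ we need in the descended $\V$, since $\emptyset$ can appear in at most one pair. The simplification $\V' \mapsto \V''$ neutralizes this, and the forced pair $(\{x\}, \{x, v\})$ uniquely identifies the vertex $v$ serving as the critical simplex of $\V$; the remaining checks on facet-cofacet incidences, injectivity of the pair correspondence, and dimension consistency of lifted and descended trajectories are mechanical.
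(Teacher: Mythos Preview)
Your proof is correct and follows essentially the same strategy as the paper's: both directions proceed by lifting/descending pairs through the bijection $\sigma \mapsto \{x\}\cup\sigma$ between simplices of $\K$ and cone simplices of $x*\K$. Your treatment is somewhat more careful than the paper's in the ``only if'' direction, where you explicitly handle the possibility of a stray pair $(\emptyset,\{w\})$ with $w\in V(\K)$ in $\V'$ before descending; the paper simply asserts that every pair of $\W$ has the form $(\sigma\cup\{x\},\tau\cup\{x\})$ and proceeds, which tacitly ignores this edge case (though it does not actually affect the descended vector field).
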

 \begin{proof}
 	First, let us assume that $x*\K$ collapses into $\K$. Let $\W$ be a gradient vector field on $x*\K$, such that $\crit^{(\W)}(x*\K)=\K \setminus \{ \emptyset\}$. We have, $x*\K=\K \sqcup \{\sigma \cup \{x\}: \sigma \in \K\} $. So, every pair in $\W$ has the form $(\sigma \cup \{x\}, \tau \cup \{x\})$, where $\sigma, \tau \in \K$. We define a gradient vector field $\V$ on $\K$ as follows.
 	\[\V= \{(\sigma,\tau): (\sigma \cup \{x\}, \tau \cup \{x\})\in \W\}.\]
 	The acyclicity of $\V$ follows from the acyclicity of $\W$. Note that, the only critical simplex in $\V$ is a $0$-simplex, which is paired with $\emptyset$. Hence, $\K$ is collapsible.
 	
 	Conversely, suppose that $\K$ is collapsible. Let $\V^\prime$ be a gradient vector field on $\K$, such that, the only critical simplex with respect to $\V$ is a $0$-simplex, and without loss of generality, we assume that it is paired with $\emptyset$. We define a gradient vector field $\W^\prime$ on $x*\K$ as follows.
 	\[\W^\prime=\{(\sigma \cup \{x\}, \tau \cup \{x\}): (\sigma, \tau) \in \V\}.\]
 	The acyclicity of $\W^\prime$ follows from the acyclicity of $\V^\prime$ and $\crit^{(\W^\prime)}(x*\K)=\K\setminus \{\emptyset\}$. Hence, $x*\K$ collapses into $\K$. 	
 \end{proof}
 
 The following immediately follows from Lemma~\ref{th-coll}. 
 \begin{corollary} \label{c-4.4}
 	Let $\K_1$ be a simplicial complex and $\K_2$ be a subcomplex of $\K_1$. Let $x \notin V(\K_1)$. Then, the simplicial complex $\K_1 \cup (x*\K_2)$ collapses into $\K_1$ if and only if $\K_2$ is collapsible (see Figure~\ref{f2}).
 \end{corollary}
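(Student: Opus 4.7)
The plan is to leverage Lemma~\ref{th-coll} by restricting and extending gradient vector fields between the cone $x*\K_2$ and the larger complex $\K_1 \cup x*\K_2$. The first thing I would pin down is the elementary set-theoretic observation that $\K_1 \cap x*\K_2 = \K_2$, so the only simplices of $\K_1 \cup x*\K_2$ that are not already in $\K_1$ are the cone simplices containing $x$, namely $\{x\}$ and $\sigma \cup \{x\}$ for $\sigma \in \K_2$.

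For the forward direction, I would start from a gradient vector field $\W$ on $\K_1 \cup x*\K_2$ whose critical set is exactly $\K_1$. Since every simplex of $\K_1$ is critical (hence unpaired), every pair of $\W$ must consist of two simplices both containing $x$; thus $\W$ lives entirely inside $x*\K_2$. I would then view $\W$ as a gradient vector field on the subcomplex $x*\K_2$ and verify that its critical set there is exactly $\K_2$: each cone simplex is paired, each simplex of $\K_2$ remains unpaired, and acyclicity is inherited by restriction (Observation~\ref{Ob2.13}(i)). Lemma~\ref{th-coll} then delivers collapsibility of $\K_2$.

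For the converse, I would invoke Lemma~\ref{th-coll} to obtain a gradient vector field $\W$ on $x*\K_2$ with $\crit^{\W}(x*\K_2) = \K_2$, and promote it unchanged to a discrete vector field on $\K_1 \cup x*\K_2$. Checking that the critical set becomes $\K_1$ is immediate: simplices in $\K_1 \setminus \K_2$ lie outside $x*\K_2$, so they never appear in any pair of $\W$; simplices in $\K_2$ inherit their critical status; and the cone simplices are already paired.

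The main obstacle will be verifying that acyclicity still holds in the enlarged complex. The crucial structural point is that any nontrivial closed $\W$-trajectory $P:\beta_0 \to \alpha_1 \rightarrowtail \beta_1 \to \cdots \rightarrowtail \beta_r$ in $\K_1 \cup x*\K_2$ must already lie inside $x*\K_2$: each $\alpha_i$ and $\beta_i$ with $i \geq 1$ appears in a pair of $\W$, hence lies in $x*\K_2$, and the closure condition $\beta_0 = \beta_r$ forces $\beta_0$ into $x*\K_2$ too. Such a trajectory would contradict the acyclicity of $\W$ on $x*\K_2$. Once this observation is in place, both directions of the equivalence reduce to routine bookkeeping of pairs.
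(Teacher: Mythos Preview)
Your proposal is correct and follows exactly the route the paper intends: the paper states that Corollary~\ref{c-4.4} ``immediately follows from Lemma~\ref{th-coll}'' without giving further details, and your argument is precisely the natural unpacking of that claim---restricting the gradient vector field to $x*\K_2$ for the forward direction and extending it trivially to $\K_1\cup x*\K_2$ for the converse, with the acyclicity check handled by noting that any closed trajectory must lie entirely in $x*\K_2$.
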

 \begin{figure}[h]
 	\centering
 	\begin{tikzpicture}
 		\draw (0,0) circle (2cm);
 		\draw (.7,.7) circle (.7cm);
 		\node [label=right:$x$] at (2.5,2.5) {\tiny$\bullet$};
 		\draw (.3,1.27) -- (2.5,2.5);
 		\draw (1.27,.3) -- (2.5,2.5);
 		\node at (.7,.7) {$\K_2$};
 		\node (a)  at (3,1) {$x*\K_2$};
 		\node at (0,-1) {$\K_1$};
 		\draw [->] (a)-- (1.5,1.5);
 	\end{tikzpicture}
 	\caption{$\K_1 \cup (x*\K_2)$} \label{f2}
 \end{figure}

\begin{definition}[$G$-simplicial complex]
	Let $G$ be a group. A simplicial complex $\K$ is said to be a $G$-simplicial complex if, for each $g \in G$, there exists a simplicial automorphism $\phi_g$ (i.e., $\phi_g$ is a bijective simplicial map with $\phi_g^{-1}$ is also a simplicial), such that,
	\begin{enumerate}[(i)]
		\item for the identity element, $e\in G$, $\phi_e(v)=v$ for all $v \in V(\K)$,
		\item for any $g_1, g_2 \in G$, $\phi_{g_1g_2}= \phi_{g_1} \circ \phi_{g_2}$.
	\end{enumerate}
\end{definition} 
 Let $\K$  be a $G$-simplicial complex. Then, for any $g\in G$ and $\sigma \in \K$, we write $g\cdot \sigma$ for $\phi_{g}(\sigma)$.
 Let $\K_1$ and $\K_2$ be two $G$-simplicial complexes. Then, $\K_1* \K_2$ is also a $G$-simplicial complex, with the natural $G$-action on $\K * \K$ defined by,
 $g\cdot(\sigma \sqcup \tau)= (g\cdot \sigma) \sqcup (g \cdot\tau)$. For a $G$-simplicial complex $\K$, $\Bd(\K)$ is also a $G$-simplicial complex. For more details see~\cite{matousekbt}. 
\begin{definition}[G-equivariant simplicial map]
	Let $\K_1$ and $\K_2$ be two $G$-simplicial complexes. A simplicial map $f:\K_1 \rightarrow \K_2$ is said to be a \emph{$G$-equivariant simplicial map} (or simply, a \emph{$G$-equivariant map}), if for any $g\in G$ and $\sigma \in \K_1$, $ f(g\cdot\sigma)= g\cdot f(\sigma)$.
\end{definition}

 \begin{definition}[Orientation on a combinatorial sphere] Let $\K$ be a $d$-dimensional psudomanifold, and $\{\sigma_1, \ldots, \sigma_n\}$ be the set of all  $d$-simplices of $\K$. Then, an $n$-tuple $r=(r_1, \ldots, r_n)$, such that $r_i \in \{\pm 1\}$, is said to be an \emph{orientation} on $\K$, if 
	\[\partial(\sum_{i=1}^n r_i\sigma_i) =0.\]  
	$\K$ is said be \emph{orientable}, if there exists an orientation on $\K$. Every combinatorial sphere is orientable (see Theorem~\ref{A2} in Appendix~\ref{Appendix}).
	Let $\s$ be a combinatorial $d$-sphere. Let $\{\sigma_1, \ldots, \sigma_n\}$ be the set of all  $d$-simplices of $\K$, and $r=(r_1, \ldots, r_n)$ be an orientation on $\s$. Then,  the cycle $\sum_{i=1}^n r_i\sigma_i$ is said to be the \emph{fundamental cycle} of $\s$ corresponding to the orientation $r$.
\end{definition}

	\section{Proof of the combinatorial Hopf trace formula}

     In this section, we prove some useful lemmas and use them to prove the combinatorial Hopf trace formula (Theorem~\ref{t1.10}). 
     
     Let $\K$ be a $d$-dimensional simplicial complex and $\V$ be a gradient vector field on it. For any $0 \leq q \leq d$, let  $\overrightarrow{\crit}_q^{(\V)}:=\{\overrightarrow
    {\sigma}^{(\V)}: \sigma \in \crit_q^{(\V)}\}$, and $\widetilde{U}_q^{(\V)}:=\{\widetilde{\alpha}:\alpha \in U_q^{(\V)}\}$, where  $\widetilde{\alpha}:=\partial(\V(\alpha))$. 
     Note that, there is a bijection between $U_q^{(\V)}$ and $\widetilde{U}_q^{(\V)}$, where $\alpha$ goes to $\widetilde{\alpha}$. Let $\widetilde{S}_q^{(\V)}:=\overrightarrow{\crit}_q^{(\V)} \cup \widetilde{U}_q^{(\V)} \cup D_q^{(\V)}$.
	
	\begin{lemma} \label{l-basis}
		Let $\K$ be a $d$-dimensional simplicial complex and $\V$ be a gradient vector field on it. Then, for each $0 \leq q \leq d$, $\widetilde{S}_q^{(\V)}$ is a $\mathbb{Z}$-basis of $C_q(\K, \mathbb{Z})$ .
	\end{lemma}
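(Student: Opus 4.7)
The plan is to start from the standard $\mathbb{Z}$-basis $S_q(\K)=\crit_q^{(\V)}\cup U_q^{(\V)}\cup D_q^{(\V)}$ of $C_q(\K,\mathbb{Z})$ and apply the Replacement Theorem (Theorem~\ref{treplacement}) iteratively, in two stages: first swap each $\sigma\in\crit_q^{(\V)}$ for its critical chain $\overrightarrow{\sigma}^{(\V)}$, then swap each $\alpha\in U_q^{(\V)}$ for $\widetilde{\alpha}=\partial(\V(\alpha))$. The first stage is routine; the delicate point is the second, where a careful ordering of $U_q^{(\V)}$ is required.

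For the first stage, Observation~\ref{Ob2.13}(iii) gives $\langle\sigma,\overrightarrow{\sigma}^{(\V)}\rangle=1$, and every nontrivial trajectory $P$ from $\sigma$ terminates at $\trr(P)=\beta_r$ with $(\alpha_r,\beta_r)\in\V$, hence $\trr(P)\in D_q^{(\V)}$. Thus $\overrightarrow{\sigma}^{(\V)}=\sigma+(\text{a chain supported on }D_q^{(\V)})$, with no other critical simplex appearing. Swapping the critical simplices one by one in any order, Theorem~\ref{treplacement} applies at every step (the relevant coefficient is $1$), and we obtain the intermediate $\mathbb{Z}$-basis $\overrightarrow{\crit}_q^{(\V)}\cup U_q^{(\V)}\cup D_q^{(\V)}$.

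For the second stage, $\widetilde{\alpha}=\partial(\V(\alpha))=[\V(\alpha),\alpha]\,\alpha+\sum_{\gamma\neq\alpha,\,\gamma\subsetneq\V(\alpha)}[\V(\alpha),\gamma]\,\gamma$ has coefficient $\pm 1$ on $\alpha$ itself. The main obstacle is that some facets $\gamma$ of $\V(\alpha)$ may themselves lie in $U_q^{(\V)}$: if such a $\gamma$ has already been replaced by $\widetilde{\gamma}$, then re-expressing $\widetilde{\alpha}$ in the current basis can introduce $\widetilde{\gamma}$-corrections that spoil the $\pm 1$ coefficient of $\alpha$. This will be resolved by a topological sort. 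Define a digraph $G$ on $U_q^{(\V)}$ with an edge $\alpha\to\alpha'$ iff $\alpha\neq\alpha'\in U_q^{(\V)}$ and $\alpha'\subsetneq\V(\alpha)$. A directed cycle $\alpha_1\to\cdots\to\alpha_n\to\alpha_1$ in $G$ would give, with $\beta_i:=\V(\alpha_i)$, the nontrivial closed $\V$-trajectory
\[\beta_1\to\alpha_2\rightarrowtail\beta_2\to\cdots\to\alpha_n\rightarrowtail\beta_n\to\alpha_1\rightarrowtail\beta_1\]
at dimension $q+1$ (with $\beta_{i-1}\neq\beta_i$ by uniqueness of pairings), contradicting the gradient property of $\V$. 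Hence $G$ is acyclic, and Theorem~\ref{t-tp} provides a topological ordering $\alpha_1,\ldots,\alpha_N$ of $U_q^{(\V)}$.

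Processing the $\alpha_j$ in this order, when we reach $\alpha_j$ the current basis is $\overrightarrow{\crit}_q^{(\V)}\cup\{\widetilde{\alpha_i}:i<j\}\cup\{\alpha_k:k\geq j\}\cup D_q^{(\V)}$. By the topological ordering, no $\alpha_i$ with $i<j$ is a facet of $\V(\alpha_j)$, so no $\widetilde{\alpha_i}$ contributes to the expansion of $\widetilde{\alpha_j}$; each critical facet $\gamma\subsetneq\V(\alpha_j)$ expands as $\gamma=\overrightarrow{\gamma}^{(\V)}-(\text{tail supported on }D_q^{(\V)})$, affecting only the $\overrightarrow{\crit}_q^{(\V)}$ and $D_q^{(\V)}$ components; and the $D_q^{(\V)}$ facets remain untouched. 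The coefficient of $\alpha_j$ in the expansion of $\widetilde{\alpha_j}$ is therefore $[\V(\alpha_j),\alpha_j]=\pm 1$, and Theorem~\ref{treplacement} applies. After $N$ swaps the basis becomes $\widetilde{S}_q^{(\V)}=\overrightarrow{\crit}_q^{(\V)}\cup\widetilde{U}_q^{(\V)}\cup D_q^{(\V)}$, completing the proof.
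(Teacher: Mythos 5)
Your proof is correct and follows the same two-stage strategy as the paper: first swap each critical $\sigma$ for $\overrightarrow{\sigma}^{(\V)}$, then topologically sort $U_q^{(\V)}$ via an acyclic digraph built from the facet relation among $\V$-paired cells and swap each $\alpha$ for $\widetilde{\alpha}=\partial(\V(\alpha))$. The only cosmetic difference is that you orient the digraph edges in the opposite direction from the paper (the paper puts an edge $\alpha_i \to \alpha_j$ when $\alpha_i \subsetneq \V(\alpha_j)$, so its sort guarantees $\alpha_j \nsubseteq \V(\alpha_i)$ for $i<j$, whereas your orientation guarantees $\alpha_i \nsubseteq \V(\alpha_j)$ for $i<j$), and either guarantee suffices to keep the $\alpha_j$-coordinate of $\widetilde{\alpha}_j$ equal to $\pm 1$ when Theorem~\ref{treplacement} is invoked.
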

	\begin{proof}
		Let $\crit_q^{(\V)}=\{\sigma_1,\dots, \sigma_p\}$, $U_q^{(\V)}=\{\alpha_1,\dots, \alpha_l\}$, and
		$D_q^{(\V)}=\{\beta_1,\dots, \beta_m\}$.
		
		\textbf{Step 1:} Recall that, for $1\leq i \leq p$, \[\overrightarrow{\sigma_i}^{(\V)}= \sum_{\substack{P: P \text{ is a}\\ \V\text{-trajectory},\\ \inn(P)=\sigma_i}} w(P)\cdot \trr(P).\]
		For any $i \in \{1,\ldots,p\}$, $\langle \overrightarrow{\sigma_i}^{(\V)}, \sigma_i\rangle=1$ (by Observation~\ref{Ob2.13}(iv)). Moreover, for any non trivial $\V$-trajectory $P$ which starts from $\sigma_i$, $\trr(P) \in D_q^{(\V)}$. Hence, by repeated use of Theorem~\ref{treplacement}, we can replace each $\sigma_i$ with $\overrightarrow{\sigma_i}^{(\V)}$ in $S_q=\{\sigma_1,\dots,\sigma_p,\alpha_1,\dots,\alpha_l,\beta_1,\dots,\beta_m\}$ to get a new $\mathbb{Z}$-basis,
		\begin{equation} \label{B1}
			B_1=\{\overrightarrow{\sigma_1}^{(\V)},\dots,\overrightarrow{\sigma_p}^{(\V)},\alpha_1,\dots,\alpha_l,\beta_1,\dots,\beta_m\}.
		\end{equation}
		
		\textbf{Step 2:} Now, let us construct a digraph $G$, with $V(G)=\{\alpha_1,\dots,\alpha_l\}$, and $(\alpha_i,\alpha_j)\in E(G)$ if and only if $\alpha_i\subseteq \V(\alpha_j)$. Note that, $G$ is acyclic, for otherwise, any directed cycle $\alpha_{i_0} , \alpha_{i_1} , \dots , \alpha_{i_k}~(=\alpha_{i_0})$ in $G$  gives rise to the following closed $\V$-trajectory.
		\[\V(\alpha_{i_k})^{(q+1)} \rightarrow \alpha_{i_{k-1}}^{(q)} \rightarrowtail \V(\alpha_{i_{k-1}})^{(q+1)} \rightarrow \dots \rightarrow \alpha_{i_0}^{(q)} \rightarrowtail \V(\alpha_{i_0})^{(q+1)}~(=\V(\alpha_{i_k})^{(q+1)}).\]
	    Hence by  Theorem~\ref{t-tp}, we can linearly order the vertices of $G$, such that for every directed edge $(\alpha_i,\alpha_j)$, the vertex $\alpha_i$ comes before $\alpha_j$ in the ordering. Without loss of generality, let $\alpha_1,\alpha_2,\dots, \alpha_l$ be the corresponding linear ordering.
		
		Since, $\alpha_1 \notin \operatorname{Support}(\overrightarrow{\sigma_i}^{(\V)})$, for $i \in \{1,\ldots,p\}$, the coefficient of $\alpha_1$ in the representation of $\widetilde{\alpha}_1$ with respect to the basis $B_1$ is $\pm 1$. We replace $\alpha_1$ with $\widetilde{\alpha}_1$ in $B_1$ to get a new $\mathbb{Z}$-basis
		\begin{equation} \label{B2}
		B_2=\{\overrightarrow{\sigma_1}^{(\V)},\dots,\overrightarrow{\sigma_p}^{(\V)},\widetilde{\alpha}_1,\alpha_2\dots,\alpha_l,\beta_1,\dots,\beta_m\}.
    	\end{equation}	
		Again, we note that, $\alpha_2 \notin \operatorname{Support}(\overrightarrow{\sigma_i}^{(\V)})$, for any $i \in \{1\ldots, p\}$, and, since, $\alpha_2\nsubseteq \V(\alpha_1)$ (note that, by our choice, $\alpha_2$ comes after $\alpha_1$ in our linear ordering), $\alpha_2 \notin \operatorname{Support}(\widetilde{\alpha}_1)$ as well. Hence, the coefficient of $\alpha_2$ in the representation of $\widetilde{\alpha}_2$ with respect to $B_2$ is $\pm1$. So, we replace $\alpha_2$ with $\widetilde{\alpha}_2$ in $B_2$ to get a new $\mathbb{Z}$-basis
		\begin{equation} \label{B3}
			B_3=\{\overrightarrow{\sigma_1}^{(\V)},\dots,\overrightarrow{\sigma_p}^{(\V)},\widetilde{\alpha}_1,\widetilde{\alpha}_2,\dots,\alpha_l,\beta_1,\dots,\beta_m\}.
		\end{equation}	
	    Similarly, we replace each $\alpha_i$ with $\widetilde{\alpha_i}$, for $2<i\leq l$, to get the desired $\mathbb{Z}$-basis
	    \begin{equation*}
	    	\widetilde{S}_q^{(\V)}=\{\overrightarrow{\sigma_1}^{(\V)},\dots,\overrightarrow{\sigma_p}^{(\V)},\widetilde{\alpha}_1,\widetilde{\alpha}_2,\dots,\widetilde{\alpha}_l,\beta_1,\dots,\beta_m\}.
	    \end{equation*}	
	\end{proof}
	\begin{lemma} \label{l3.2}
		Let $\K$ be a $d$-dimensional simplicial complex with a gradient vector field $\V$ on it. For $q \in \{0,\ldots,d\}$, let $\crit_q^{(\V)}=\{\sigma_1,\dots,\sigma_p\}$ and $D_q^{(\V)}=\{\beta_1,\dots,\beta_m\}$. Let $\xi \in Z_q(\K, \mathbb{Z})$, such that 
		$\xi=\sum_{i=1}^p s_i\sigma_i + \sum_{j=1}^m t_j\beta_j$, where $s_i, t_j \in \mathbb{Z}$, for $1 \leq i \leq p$ and $1\leq j \leq m$. Then,
	
		\[\xi=\sum_{i=1}^p s_i\overrightarrow{\sigma_i}^{(\V)}. \]
			 .
	\end{lemma}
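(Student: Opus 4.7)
The plan is to expand $\xi$ in the basis $\widetilde{S}_q^{(\V)}=\{\overrightarrow{\sigma_i}^{(\V)}\}_i\cup\{\widetilde{\alpha_k}\}_k\cup\{\beta_j\}_j$ provided by Lemma~\ref{l-basis}, say as
\[
\xi \;=\; \sum_{i=1}^p a_i\,\overrightarrow{\sigma_i}^{(\V)} + \sum_{k=1}^l b_k\,\widetilde{\alpha_k} + \sum_{j=1}^m c_j\,\beta_j \qquad (U_q^{(\V)}=\{\alpha_1,\dots,\alpha_l\}),
\]
and to show $a_i=s_i$, $b_k=0$ and $c_j=0$. Because $\overrightarrow{\sigma_i}^{(\V)}$ is supported on $\crit_q^{(\V)}\cup D_q^{(\V)}$ (nontrivial trajectories from a critical simplex terminate on $D_q^{(\V)}$) and $\beta_j\in D_q^{(\V)}$, only the $\widetilde{\alpha_k}$'s can contribute to the $\alpha_{k'}$-coefficient of $\xi$. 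The hypothesis that $\xi$ has no $U_q^{(\V)}$-component then gives $\sum_k b_k\,[\V(\alpha_k),\alpha_{k'}]=0$ for each $k'$, and under the topological ordering $\alpha_1,\dots,\alpha_l$ of Lemma~\ref{l-basis} this system is triangular with $\pm 1$ diagonal, forcing $b_k=0$ for every $k$. Comparing $\sigma_i$-coefficients then yields $a_i=s_i$, so we are reduced to showing that $\xi=\sum_i s_i\,\overrightarrow{\sigma_i}^{(\V)}+\sum_j c_j\beta_j$ cycle implies $c_j=0$.

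The technical core is the claim that $\partial\overrightarrow{\sigma_i}^{(\V)}$ has no component on $U_{q-1}^{(\V)}$. Fix $\gamma\in U_{q-1}^{(\V)}$ and set $\beta:=\V(\gamma)\in D_q^{(\V)}$. Writing $\overrightarrow{\sigma_i}^{(\V)}=\sigma_i+\sum_{\tau\in D_q^{(\V)}}\langle\tau,\overrightarrow{\sigma_i}^{(\V)}\rangle\,\tau$ and taking $\partial$, the coefficient of $\gamma$ equals $[\sigma_i,\gamma]+\sum_{\tau\in D_q^{(\V)},\,\gamma\subsetneq\tau}[\tau,\gamma]\,\langle\tau,\overrightarrow{\sigma_i}^{(\V)}\rangle$. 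On the other hand, because $(\gamma,\beta)$ is the only pair in $\V$ containing $\beta$, every $\V$-trajectory from $\sigma_i$ terminating at $\beta$ must end in a step $\tau\to\gamma\rightarrowtail\beta$ with $\gamma\subsetneq\tau\neq\beta$; decomposing such a trajectory as an initial trajectory from $\sigma_i$ to $\tau$ (trivial when $\tau=\sigma_i$) followed by this last step, and summing the resulting weights, I obtain the recursion
\[
\langle\beta,\overrightarrow{\sigma_i}^{(\V)}\rangle \;=\; -[\beta,\gamma]\Big([\sigma_i,\gamma]+\sum_{\tau\in D_q^{(\V)},\,\gamma\subsetneq\tau,\,\tau\neq\beta}[\tau,\gamma]\,\langle\tau,\overrightarrow{\sigma_i}^{(\V)}\rangle\Big).
\]
Multiplying through by $[\beta,\gamma]$ and rearranging gives precisely the vanishing of the displayed coefficient. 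This trajectory-level cancellation is where I expect the main obstacle to lie: it is the purely combinatorial manifestation of the familiar fact that critical chains descend to a well-defined differential on the ``Morse complex'', and I have to carry it out by direct bookkeeping of incidence numbers and weights rather than by any homological argument.

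With that in hand, apply $\partial$ to $\xi=\sum_i s_i\,\overrightarrow{\sigma_i}^{(\V)}+\sum_j c_j\beta_j$ and use $\partial\xi=0$. The step above kills the $\overrightarrow{\sigma_i}^{(\V)}$-contributions to the coefficient of each $\gamma_k:=\V^{-1}(\beta_k)\in U_{q-1}^{(\V)}$, leaving $\sum_{j=1}^m c_j\,[\beta_j,\gamma_k]=0$. The digraph on $D_q^{(\V)}$ whose edge set records the relation $\V^{-1}(\beta_{j'})\subsetneq\beta_j$ (with $\beta_j\neq\beta_{j'}$) is acyclic, for a directed cycle would assemble into a nontrivial closed $\V$-trajectory, contradicting the gradient hypothesis. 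Hence Theorem~\ref{t-tp} supplies a topological ordering $\beta_1,\dots,\beta_m$ with respect to which the matrix $\big([\beta_j,\gamma_k]\big)_{k,j}$ is triangular with $\pm 1$ diagonal, so it is invertible over $\mathbb{Z}$ and $c_j=0$ for all $j$. This completes the identity $\xi=\sum_{i=1}^p s_i\,\overrightarrow{\sigma_i}^{(\V)}$.
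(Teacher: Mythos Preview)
Your proof is correct, but it follows a genuinely different route from the paper's.

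The paper proves Lemma~\ref{l3.2} by induction on $|\V|$: one removes a pair $(\alpha,\beta)$ to form $\V'=\V\setminus\{(\alpha,\beta)\}$, applies the induction hypothesis for $\V'$, and then (in the interesting case $q=\dim\beta$) extracts the coefficient $t_1$ of $\beta$ from the cycle condition and reassembles the $\V'$-trajectory sums into $\V$-trajectory sums. No appeal to Lemma~\ref{l-basis} is made.

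Your argument instead works globally: you expand $\xi$ in the basis $\widetilde{S}_q^{(\V)}$ of Lemma~\ref{l-basis}, kill the $\widetilde{\alpha}_k$-coefficients via the same topological-ordering triangularity used in that lemma, read off $a_i=s_i$ on the $\sigma_i$-coordinates, and then eliminate the residual $\beta_j$-coefficients by applying $\partial$ and invoking a second triangular system on $D_q^{(\V)}$. The key technical step---that $\partial\overrightarrow{\sigma_i}^{(\V)}$ has no $U_{q-1}^{(\V)}$-component---is exactly the trajectory cancellation that the paper proves as the first half of Lemma~\ref{l3.3} (Equation~\eqref{eq12} and the computation following it); that part of Lemma~\ref{l3.3} does not rely on Lemma~\ref{l3.2}, so there is no circularity. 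In effect you have reordered the logic: rather than proving Lemma~\ref{l3.2} by induction and then deducing Lemma~\ref{l3.3}, you isolate the $U$-vanishing piece of Lemma~\ref{l3.3} first and use it, together with Lemma~\ref{l-basis} and two acyclicity/triangularity arguments, to obtain Lemma~\ref{l3.2} without induction. Your approach is more structural and avoids the three-case analysis; the paper's approach is more self-contained in that it does not rely on the basis lemma.
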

		\begin{proof}
			We prove this by induction on the  cardinality of $\V$.
			If $|\V|=0$, all the simplices are critical. Hence, $S_q=\crit^{(\V)}_q= \overrightarrow{\crit}^{(\V)}_q$, $D^{(\V)}_q=\emptyset$, and for any $\sigma_i\in \crit_q^{(\V)}$, $\sigma_i=\overrightarrow{\sigma_i}^{(\V)}$. So, our hypothesis is true for $|\V|=0$.
			
			Let us assume that our hypothesis is true for all the gradient vector fields with cardinality less than $n$. Let $|\V|=n$. Let $(\alpha^{(k-1)},\beta^{(k)})\in\V$, for some $k\in \{1,\ldots,d\}$. Let us define \[\W= \V \setminus \{(\alpha^{(k-1)},\beta^{(k)})\}.\] Note that, $\W$ is also a gradient vector field (by Observation~\ref{Ob2.13}(i)) on $\K$, with $|\W|=n-1$.
			
			\textbf{Case 1.}
			If $q>k$ or $q<k-1$, $\crit^{(\V)}_q=\crit^{(\W)}_q$, $D^{(\V)}_q=D^{(\W)}_q$ and $\overrightarrow{\sigma_i}^{(\V)}=\overrightarrow{\sigma_i}^{(\W)}$, for all $1 \leq i \leq p$. Since our hypothesis is true for $\W$, $\xi=\sum_{i=1}^p s_i\overrightarrow{\sigma_i}^{(\W)} $, and hence, $\xi=\sum_{i=1}^p s_i\overrightarrow{\sigma_i}^{(\V)} $.
			
			\textbf{Case 2.}
			If $q=k$, then $\beta=\beta_i$, for some $i\in \{1,\ldots,m\}$. Without loss of generality, let $\beta=\beta_1$. Then $\crit^{(\W)}_q=\crit^{(\V)}_q \cup \{\beta_1\}$ and $D^{(\W)}_q=D^{(\V)}_q \setminus \{\beta_1\}$.	
			Therefore, we have 
			\begin{align}
				 \nonumber &\xi= \sum^p_{i=1}s_i\sigma_i+\sum^m_{j=1} t_j\beta_j \\ 
		     	\nonumber \implies &\xi= \left(\sum^p_{i=1}s_i\sigma_i+t_1\beta_1\right)+\sum^m_{j=2} t_j\beta_j\\
		     	 \nonumber\implies &\xi= \sum_{i=1}^{p}s_i\overrightarrow{\sigma_i}^{(\W)}+ t_1 \overrightarrow{\beta_1}^{(\W)} \quad \text{(since, our hypothesis is true for $\W$)}\\
				\implies &\xi =\sum_{i=1}^p s_i \left(  \sum_{\substack{P:P \text{ is a}\\ \W \text{-trajectory,} \\ \inn(P) = \sigma_i}} w(P) \cdot \trr(P)\right) +t_1 \left(  \sum_{\substack{P:P \text{ is a}\\ \W \text{-trajectory,} \\ \inn(P) = \beta_1}} w(P) \cdot \trr(P) \right) \label{eq5}\\
				\nonumber \implies &\partial(\xi)=\sum_{i=1}^p s_i \left(  \sum_{\substack{P:P \text{ is a}\\ \W \text{-trajectory,} \\ \inn(P) = \sigma_i}} w(P) \cdot \partial(\trr(P)) \right)+t_1 \left(  \sum_{\substack{P:P \text{ is a}\\ \W \text{-trajectory,} \\ \inn(P) = \beta_1}} w(P) \cdot \partial(\trr(P)) \right)\\
				\nonumber \implies  &\langle \partial(\xi), \alpha \rangle=	\sum_{i=1}^p s_i \cdot \sum_{\substack{P:P \text{ is a}\\ \W \text{-trajectory,} \\ \inn(P) = \sigma_i}} w(P) \cdot [ \trr(P), \alpha ] +t_1\cdot  \sum_{\substack{P:P \text{ is a}\\ \W \text{-trajectory,} \\ \inn(P) = \beta_1}} w(P) \cdot [ \trr(P), \alpha ]\\
			    \nonumber \implies &\langle 0, \alpha \rangle=\sum_{i=1}^p s_i \cdot \sum_{\substack{P:P \text{ is a}\\ \W \text{-trajectory,} \\ \inn(P) = \sigma_i}} w(P) \cdot [ \trr(P), \alpha ] +t_1\cdot  \sum_{\substack{P:P \text{ is a}\\ \W \text{-trajectory,} \\ \inn(P) = \beta_1}} w(P) \cdot [ \trr(P), \alpha ]\\
				\nonumber\implies &0= \sum_{i=1}^p s_i \cdot  \sum_{\substack{P:P \text{ is a}\\ \W \text{-trajectory,} \\ \inn(P) = \sigma_i}} w(P) \cdot [ \trr(P), \alpha ] +t_1\cdot [ \beta_1, \alpha ]  ~\text{(by Observation~\ref{Ob2.13}(ii))}\\
				\nonumber\implies & t_1=-\sum_{i=1}^p s_i \cdot  \sum_{\substack{P:P \text{ is a}\\ \W \text{-trajectory,} \\ \inn(P) = \sigma_i}} w(P) \cdot [ \trr(P), \alpha ] \cdot [ \beta_1, \alpha ]\\
				\nonumber\implies & t_1=\sum_{i=1}^p s_i \cdot  \sum_{\substack{P:P \text{ is a}\\ \W \text{-trajectory,} \\ \inn(P) = \sigma_i}} w(P) \cdot(- [ \trr(P), \alpha ]\cdot [ \beta_1, \alpha ] )\\
				\implies & t_1= \sum_{i=1}^p s_i \cdot \sum_{\substack{P:P \text{ is a}\\ \V \text{-trajectory,} \\ \inn(P)= \sigma_i,\\\trr(P)=\beta_1}} w(P). \label{eq8}
			\end{align}
				Now, for any $j =2, \ldots, m$, we get	
			\begin{align}
				\nonumber t_j = &\langle \xi, \beta_j \rangle\\
				\nonumber  =& \left\langle \sum_{i=1}^p s_i \cdot \left(  \sum_{\substack{P:P \text{ is a}\\ \W \text{-trajectory,} \\ \inn(P) = \sigma_i}} w(P) \cdot \trr(P)\right) +t_1 \cdot \left(  \sum_{\substack{P:P \text{ is a}\\ \W \text{-trajectory,} \\ \inn(P) = \beta_1}} w(P) \cdot \trr(P) \right), \beta_j \right\rangle ~\text{(by Equation~\eqref{eq5})}\\
				\nonumber =&  \sum_{i=1}^p s_i \cdot \left\langle  \sum_{\substack{P:P \text{ is a}\\ \W \text{-trajectory,} \\ \inn(P) = \sigma_i}} w(P) \cdot \trr(P), \beta_j \right\rangle +t_1 \cdot \left\langle  \sum_{\substack{P:P \text{ is a}\\ \W \text{-trajectory,} \\ \inn(P) = \beta_1}} w(P) \cdot \trr(P) , \beta_j \right\rangle \\
				\nonumber =&\sum_{i=1}^p s_i \cdot\sum_{\substack{P:P \text{ is a}\\ \W \text{-trajectory} \\\inn(P)= \sigma_i,\\ \trr(P)=\beta_j}} w(P) +t_1\cdot \sum_{\substack{P:P \text{ is a}\\ \W \text{-trajectory} \\\inn(P)= \beta_1,\\ \trr(P)=\beta_j}} w(P)\\
				\nonumber =&\sum_{i=1}^p s_i \cdot \sum_{\substack{P:P \text{ is a}\\ \W \text{-trajectory} \\\inn(P)= \sigma_i,\\ \trr(P)=\beta_j}} w(P)+ \left(\sum_{i=1}^p s_i \cdot \sum_{\substack{P:P \text{ is a}\\ \V \text{-trajectory} \\ \inn(P)= \sigma_i,\\ \trr(P)=\beta_1}} w(P) \right) \sum_{\substack{P:P \text{ is a}\\ \W \text{-trajectory} \\\inn(P)= \beta_1,\\ \trr(P)=\beta_j}} w(P) ~~\text{(by Equation~\eqref{eq8})}\\
				\nonumber =&\sum_{i=1}^p s_i \cdot  \sum_{\substack{P:P \text{ is a}\\ \W \text{-trajectory} \\\inn(P)= \sigma_i,\\ \trr(P)=\beta_j}} w(P) + \sum_{i=1}^p s_i \cdot  \sum_{\substack{P:P \text{ is a}\\ \V \text{-trajectory} \\ \inn(P)= \sigma_i,\\ \trr(P)=\beta_1}} w(P) \cdot \sum_{\substack{P:P \text{ is a}\\ \V \text{-trajectory} \\\inn(P)= \beta_1,\\ \trr(P)=\beta_j}} w(P) \\
				\nonumber & ~~ (\text{by Observation~\ref{Ob2.13}(iii)})\\
				\nonumber 
				 =&\sum_{i=1}^p s_i \cdot  \sum_{\substack{P:P \text{ is a}\\ \V \text{-trajectory} \\\inn(P)= \sigma_i,\\ \trr(P)=\beta_j, \\ (\alpha,\beta)\notin P}} w(P) + \sum_{i=1}^p s_i \cdot  \sum_{\substack{P:P \text{ is a}\\ \V \text{-trajectory} \\\inn(P)= \sigma_i,\\ \trr(P)=\beta_j, \\ (\alpha,\beta)\in P}} w(P)  \\
				 =&\sum_{i=1}^p s_i \cdot  \sum_{\substack{P:P \text{ is a}\\ \V \text{-trajectory} \\\inn(P)= \sigma_i,\\ \trr(P)=\beta_j}} w(P). \label{eq9}
			\end{align}
			
			From Equation~\eqref{eq8} and Equation~\eqref{eq9}, for $1 \leq j \leq m$,  we get
			\begin{equation}
				t_j=\sum_{i=1}^p s_i \cdot  \sum_{\substack{P:P \text{ is a}\\ \V \text{-trajectory} \\\inn(P)= \sigma_i,\\ \trr(P)=\beta_j}} w(P). \label{eq10}
			\end{equation}
		
			Now, we have
			\begin{align*}
				\xi =& \sum^p_{i=1}s_i\sigma_i+\sum^m_{j=1} t_j\beta_j\\
				&= \sum_{i=1}^p s_i\sigma_i + \sum_{j=1}^m \left( \sum_{i=1}^p s_i \cdot  \sum_{\substack{P:P \text{ is a}\\ \V \text{-trajectory} \\\inn(P)= \sigma_i,\\ \trr(P)=\beta_j}} w(P) \right) \beta_j ~~\text{(from Equation~\eqref{eq10})}\\
				&= \sum_{i=1}^p s_i\sigma_i + \sum_{i=1}^p s_i  \left(  \sum_{j=1}^m  \sum_{\substack{P:P \text{ is a}\\ \V \text{-trajectory} \\\inn(P)= \sigma_i,\\ \trr(P)=\beta_j}} w(P) \cdot \beta_j  \right)  \\ 
				&= \sum_{i=1}^p s_i \left( \sigma_i + \sum_{j=1}^m \sum_{\substack{P:P \text{ is a}\\ \V \text{-trajectory} \\\inn(P)= \sigma_i,\\ \trr(P)=\beta_j}} w(P) \cdot \beta_j  \right)  \\ 
				&=\sum_{i=1}^p s_i  \cdot \sum_{\substack{P:P \text{ is a}\\ \V \text{-trajectory} \\ \text{from } \sigma_i}} w(P) \cdot \trr(P)\\
				&=  \sum_{i=1}^p s_i \overrightarrow{\sigma_i}^{(\V)}.
			\end{align*}
			
			\textbf{Case 3.}
			If $q=k-1$, $\crit^{(\W)}_q=\crit^{(\V)}_q \cup \{\alpha\}$, $U^{(\W)}_q=U^{(\V)}_q \setminus \{\alpha\}$, and $D^{(\W)}_q=D^{(\V)}_q$ .  Since, our hypothesis is true for $\W$, we get 
			\begin{align*}
				\xi &= \sum^p_{i=1}s_i\sigma_i+\sum^m_{j=1} t_j\beta_j\\ 
				&=  \sum^p_{i=1}s_i\sigma_i + 0\cdot \alpha +\sum^m_{j=1} t_j\beta_j \label{eq4}\\ 
				&=\sum_{i=1}^p s_i\overrightarrow{\sigma_i}^{(\W)}+0\cdot \overrightarrow{\alpha}^{(\W)}\\
				&=\sum_{i=1}^p s_i\overrightarrow{\sigma_i}^{(\W)}.
			\end{align*}

			Note that, for any $i\in \{1,\ldots,p\}$, the $\V$-trajectories, starting from any $\sigma_i$, do not contain the pair $(\alpha, \beta)$. So, the $\V$-trajectories and $\W$-trajectories, which start from $\sigma_i$, are same. Which implies, for all $i\in \{1,\ldots,p\}$, $\overrightarrow{\sigma_i}^{(\V)}=\overrightarrow{\sigma_i}^{(\W)}$. Therefore, we get
			$\xi=\sum_{i=1}^p s_i\overrightarrow{\sigma_i}^{(\W)}=\sum_{i=1}^p s_i\overrightarrow{\sigma_i}^{(\V)}$.
			
		\end{proof}
    \begin{lemma} \label{l3.3}
    	Let $\K$ be a $d$-dimensional simplicial complex and $\V$ be a gradient vector field on it. For some  $q \in \{0,\ldots,d-1\}$, let  $\crit^{(\V)}_{q}(\K)=\{\sigma_1, \ldots, \sigma_p\}$. Then, for any $\tau\in \crit_{q+1}^{(\V)}(\K)$, we have the following.
    	\[\partial(\overrightarrow{\tau}^{(\V)})=\sum_{i=1}^{p} \left(\sum_{\substack{P:P \text{ is a}\\ \V \text{-trajectory,}\\ \inn(P)=\tau}}w(P)\cdot [ \trr(P),\sigma_i ] \right) \overrightarrow{\sigma_i}^{(\V)}.\]
    \end{lemma}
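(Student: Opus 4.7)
The plan is to invoke Lemma~\ref{l3.2} on the $q$-chain $\xi := \partial(\overrightarrow{\tau}^{(\V)})$. Since $\partial\circ\partial = 0$, we have $\xi \in Z_q(\K)$ automatically; what remains is to verify the other hypothesis of Lemma~\ref{l3.2}, namely that when $\xi$ is expanded in the standard basis $S_q(\K)$, the coefficient of every $\alpha \in U_q^{(\V)}$ vanishes. Once this is checked, the lemma rewrites $\xi$ as $\sum_{i=1}^p s_i\,\overrightarrow{\sigma_i}^{(\V)}$ with $s_i = \langle \xi,\sigma_i\rangle$, and a direct expansion
\[s_i = \Big\langle \sum_{P:\inn(P)=\tau} w(P)\,\partial(\trr(P)),\ \sigma_i\Big\rangle = \sum_{P:\inn(P)=\tau} w(P)\cdot [\trr(P),\sigma_i]\]
yields the claimed identity.

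For the vanishing check, fix $\alpha \in U_q^{(\V)}$ with $\V$-partner $\beta^{(q+1)}$, so $(\alpha,\beta)\in\V$. The coefficient of $\alpha$ in $\xi$ equals $\sum_P w(P)\cdot[\trr(P),\alpha]$, summed over $\V$-trajectories $P$ with $\inn(P)=\tau$ and $\alpha\subsetneq \trr(P)$. I would split these into three classes: class (A), those whose final pair is $(\alpha,\beta)$ (so $\trr(P)=\beta$); class (B), those that use the pair $(\alpha,\beta)$ at some non-terminal position; and class (C), those that never use $(\alpha,\beta)$. Class (B) turns out to be empty by Observation~\ref{Ob2.13}(ii): if $(\alpha,\beta)$ appeared at an interior position $k$ of $P$, the tail of $P$ starting at $\beta_k=\beta$ would be a non-trivial $\V$-trajectory from $\beta$ whose terminal face strictly contains $\alpha$, contradicting the uniqueness statement of that observation. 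For classes (A) and (C) there is a natural bijection $P \leftrightarrow P^{\ast}$ obtained by appending or removing the terminal step $\alpha \rightarrowtail \beta$; a brief weight computation using $[\beta,\alpha]^2 = 1$ shows $w(P)\cdot[\trr(P),\alpha] + w(P^{\ast})\cdot[\trr(P^{\ast}),\alpha] = 0$, so all contributions cancel in pairs and the coefficient of $\alpha$ is $0$.

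The main obstacle is this vanishing step, and within it the critical input is Observation~\ref{Ob2.13}(ii), which packages the acyclicity of $\V$ in exactly the form needed to eliminate class (B); without it, interior occurrences of $(\alpha,\beta)$ would produce orphan contributions with no natural partner. The sign bookkeeping for the (A)$\leftrightarrow$(C) bijection is routine once the weight formula $w(P)=\prod_{i=1}^{r}-[\beta_{i-1},\alpha_i][\beta_i,\alpha_i]$ is unfolded, and the remainder of the argument is a mechanical application of Lemma~\ref{l3.2} together with the linearity of $\partial$.
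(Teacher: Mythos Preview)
Your argument is correct and follows essentially the same route as the paper: both proofs show that the coefficient of each $\alpha_j\in U_q^{(\V)}$ in $\partial(\overrightarrow{\tau}^{(\V)})$ vanishes by pairing trajectories that end at $\V(\alpha_j)$ with those that end elsewhere but whose terminal face still contains $\alpha_j$, and then invoke Lemma~\ref{l3.2}. Your explicit isolation of class (B) and its elimination via Observation~\ref{Ob2.13}(ii) makes transparent exactly where acyclicity enters, whereas the paper compresses this into the phrase ``from the definition of the weight of a trajectory and the acyclicity of $\V$'' when deriving its Equation~(12); but the bijection, the sign computation, and the final appeal to Lemma~\ref{l3.2} are the same.
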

    \begin{proof}
    Let $U_{q}^{(\V)}=\{\alpha_1,\ldots, \alpha_l\}$ and $D^{(\V)}_{q}=\{\beta_1, \ldots, \beta_m\}$. Recall that, $S_q= \crit_{q+1}^{(\V)} \sqcup U_{q}^{(\V)}  \sqcup D^{(\V)}_{q}$ is the standard basis of $C_q(\K,\mathbb{Z}$). We have
    \begin{align}
    \nonumber \partial(\overrightarrow{\tau}^{(\V)})=& \partial \left( \sum_{\substack{P:P \text{ is a}\\ \V \text{-trajectory,}\\ \inn(P)=\tau}}w(P)\cdot \trr(P)\right)\\
    \nonumber =& \sum_{\substack{P:P \text{ is a}\\ \V \text{-trajectory,}\\ \inn(P)=\tau}}w(P)\cdot \partial(\trr(P))\\
     \nonumber=& \sum_{\substack{P:P \text{ is a}\\ \V \text{-trajectory,}\\ \inn(P)=\tau}}w(P)\cdot \left( \sum_{\sigma \in S_{q}}[\trr(P),\sigma]\cdot \sigma\right)\\
     \nonumber=&  \sum_{\sigma \in S_{q}} \left( \sum_{\substack{P:P \text{ is a}\\ \V \text{-trajectory,}\\ \inn(P)=\tau}}w(P)\cdot[\trr(P),\sigma]\right) \sigma \\
    \nonumber=& \sum_{i=1}^{p} \left(\sum_{\substack{P:P \text{ is a}\\ \V \text{-trajectory,}\\ \inn(P)=\tau}}w(P)\cdot [ \trr(P),\sigma_i ] \right) \sigma_i+  \sum_{j=1}^{l}\left( \sum_{\substack{P:P \text{ is a}\\ \V \text{-trajectory,}\\ \inn(P)=\tau}}w(P)\cdot [ \trr(P),\alpha_j ] \right)\alpha_j\\
    & + \sum_{k=1}^{m} \left( \sum_{\substack{P:P \text{ is a}\\ \V \text{-trajectory,}\\ \inn(P)=\tau}}w(P)\cdot [ \trr(P),\beta_k ]\right)  \beta_k. \label{eq11}
    \end{align}	
     We note that, the Equation~\eqref{eq11} is the representation $\partial(\overrightarrow{\tau}^{(\V)})$ with respect to the basis $S_q$. From the definition of the weight of a trajectory and the acyclicity of $\V$, for any $\alpha_j \in U_q^{(\V)}$, we have the following.
    \begin{align}
    	\nonumber \sum_{\substack{P:P \text{ is a } \\ \V\text{-trajectory,} \\ \inn(P)=\tau, \\ \trr(P) =\V(\alpha_j)}} w(P)&= \sum_{\substack{P:P \text{ is a } \\ \V\text{-trajectory,} \\ \inn(P)=\tau, \\ \trr(P)\neq \V(\alpha_j)}} w(P) \cdot (-[ \trr(P),\alpha_j ]\cdot[ \V(\alpha_j),\alpha_j ])\\
    	& = - \sum_{\substack{P:P \text{ is a } \\ \V\text{-trajectory,} \\ \inn(P)=\tau, \\ \trr(P)\neq \V(\alpha_j)}} w(P) \cdot [ \trr(P),\alpha_j ]\cdot[ \V(\alpha_j),\alpha_j ]. \label{eq12}
    \end{align}
    Now, for the coefficient of $\alpha_j$ in Equation~\eqref{eq11}, we have
    \begin{align} 
        \nonumber &\sum_{\substack{P:P \text{ is a } \\ \V\text{-trajectory,} \\ \inn(P)=\tau}} w(P) \cdot [ \trr(P),\alpha_j ]\\
        \nonumber =& \sum_{\substack{P:P \text{ is a } \\ \V\text{-trajectory,} \\ \inn(P)=\tau, \\ \trr(P)\neq\V(\alpha_j)}} w(P) \cdot [ \trr(P),\alpha_j ] + \sum_{\substack{P:P \text{ is a } \\ \V\text{-trajectory,} \\ \inn(P)=\tau, \\ \trr(P)=\V(\alpha_j)}} w(P) \cdot [ \trr(P),\alpha_j ] \\
    	 \nonumber=& \sum_{\substack{P:P \text{ is a } \\ \V\text{-trajectory,} \\ \inn(P)=\tau, \\ \trr(P)\neq \V(\alpha_j)}} w(P)\cdot [ \trr(P),\alpha_j ] + \sum_{\substack{P:P \text{ is a } \\ \V\text{-trajectory,} \\ \inn(P)=\tau, \\ \trr(P)=\V(\alpha_j)}} w(P) \cdot [ \V(\alpha_j),\alpha_j ]\\
    	\nonumber=& \sum_{\substack{P:P \text{ is a } \\ \V\text{-trajectory,} \\ \inn(P)=\tau, \\ \trr(P)\neq\V(\alpha_j)}} w(P) \cdot [ \trr(P),\alpha_j ] \\
    	\nonumber & + \left( -\sum_{\substack{P:P \text{ is a } \\ \V\text{-trajectory,} \\ \inn(P)=\tau, \\ \trr(P)\neq\V(\alpha_j)}} w(P) \cdot [ \trr(P),\alpha_j ]\cdot[ \V(\alpha_j),\alpha_j ]\right) \cdot [ \V(\alpha_j),\alpha_j ] \\
    	\nonumber & \text{(from Equation~\eqref{eq12})}\\
    	\nonumber =& \sum_{\substack{P:P \text{ is a } \\ \V\text{-trajectory,} \\ \inn(P)=\tau, \\ \trr(P)\neq \V(\alpha_j)}} w(P) [ \trr(P),\alpha_j ] -\sum_{\substack{P:P \text{ is a } \\ \V\text{-trajectory,} \\ \inn(P)=\tau, \\ \trr(P)\neq \V(\alpha_j)}} w(P) [ \trr(P),\alpha_j ]\\
        \nonumber= &~ 0.
    \end{align}
    Putting this value in Equation~\eqref{eq11}, we get
       	\[\partial(\overrightarrow{\tau}^{(\V)})=\sum_{i=1}^{p} \left( \sum_{\substack{P:P \text{ is a}\\ \V \text{-trajectory,}\\ \inn(P)=\tau}}w(P)\cdot [ \trr(P),\sigma_i ] \right) \sigma_i+\sum_{k=1}^{m} \left( \sum_{\substack{P:P \text{ is a}\\ \V \text{-trajectory,}\\ \inn(P)=\tau}}w(P)\cdot [ \trr(P),\beta_k ] \right) \beta_k.\]
       	Now since, $\partial(\overrightarrow{\tau}^{(\V)}) \in Z_q(\K)$, from Lemma~\ref{l3.2}, we have
       	\[\partial(\overrightarrow{\tau}^{(\V)})=\sum_{i=1}^{p} \left( \sum_{\substack{P:P \text{ is a}\\ \V \text{-trajectory,}\\ \inn(P)=\tau}}w(P)\cdot [ \trr(P),\sigma_i ]\right) \overrightarrow{\sigma_i}^{(\V)}.\]
    \end{proof}
    \begin{lemma} \label{l-3.4}
    	Let $\K$ be a $d$-dimensional simplicial complex and $\V$ be a gradient vector field on it. Let $\sigma\in \crit_q^{(\V)}(\K)$. Then, for any $\xi \in C_q(\K, \mathbb{Z})$, 
    	\[\langle \xi,\overrightarrow{\sigma}^{(\V)} \rangle_{\tilde{S}_q^{(\V)}}=\langle \overleftarrow{\sigma}^{(\V)},\xi\rangle_{S_q}.\]
    	
    \end{lemma}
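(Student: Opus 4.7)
The plan is to use $\mathbb{Z}$-bilinearity of both sides in $c$ and reduce the claim to checking it on a basis of $C_q(\K,\mathbb{Z})$. The convenient choice is the basis $\tilde{S}_q^{(\V)} = \overrightarrow{\crit}_q^{(\V)} \cup \widetilde{U}_q^{(\V)} \cup D_q^{(\V)}$ provided by Lemma~\ref{l-basis}, since then the left-hand side reduces to simply extracting the $\overrightarrow{\sigma}^{(\V)}$-coordinate. Three cases arise.

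For $c = \overrightarrow{\sigma'}^{(\V)}$ with $\sigma' \in \crit_q^{(\V)}$, the LHS is $\delta_{\sigma,\sigma'}$. To match this on the RHS, I would first observe that every nontrivial $\V$-trajectory from a critical simplex terminates in $D_q^{(\V)}$, and every nontrivial co-$\V$-trajectory from a critical simplex terminates in $U_q^{(\V)}$ (because each $\beta_i$ with $i\geq 1$ in the respective sequences is, by construction, paired in $\V$ with a lower- resp.\ higher-dimensional simplex). Hence in the standard basis, $\overrightarrow{\sigma'}^{(\V)} = \sigma' + (\text{chain on }D_q^{(\V)})$ and $\overleftarrow{\sigma}^{(\V)} = \sigma + (\text{chain on }U_q^{(\V)})$, and the disjointness $D_q^{(\V)} \cap U_q^{(\V)} = \emptyset$ forces $\langle \overleftarrow{\sigma}^{(\V)}, \overrightarrow{\sigma'}^{(\V)} \rangle_{S_q} = \delta_{\sigma,\sigma'}$. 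For $c = \beta \in D_q^{(\V)}$, both sides vanish by the same support consideration.

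The substantive case is $c = \widetilde{\alpha} = \partial\gamma$ where $\gamma = \V(\alpha)$ for $\alpha \in U_q^{(\V)}$. Here the LHS is zero, so I need to show
\[\langle \overleftarrow{\sigma}^{(\V)}, \partial\gamma \rangle_{S_q} = \sum_{\substack{Q\text{ co-}\V\text{-traj}\\ \inn(Q)=\sigma}} w(Q)\cdot [\gamma,\trr(Q)] = 0.\]
Only co-trajectories whose terminal face is a facet of $\gamma$ contribute. My plan is to exhibit a fixed-point-free, sign-reversing involution on this set of co-trajectories: if $\trr(Q) = \delta \neq \alpha$, extend $Q$ by appending the triple $\delta \leftarrow \gamma \leftarrowtail \alpha$ to form $Q'$ with $\trr(Q') = \alpha$; conversely, if $\trr(Q) = \alpha$, the final triple is forced to be $\beta_{s-1} \leftarrow \gamma \leftarrowtail \alpha$ (since $\V(\alpha)=\gamma$), so truncate it. A direct computation using $w(Q') = w(Q)\cdot(-[\gamma,\delta][\gamma,\alpha])$ and $[\gamma,\alpha]^2 = 1$ shows that the paired contributions $w(Q)[\gamma,\delta]$ and $w(Q')[\gamma,\alpha]$ cancel.

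The main obstacle is verifying that the extension step always produces a \emph{valid} co-trajectory, equivalently that $\alpha$ never appears among the $\beta_i$ of $Q$. The case $i=0$ is immediate since $\beta_0 = \sigma$ is critical while $\alpha \in U_q^{(\V)}$. For $i \geq 1$, if $\beta_i = \alpha$, then the subsequence of $Q$ from position $i$ onward together with the appended triple $\delta \leftarrow \gamma \leftarrowtail \alpha$ would form a nontrivial closed co-$\V$-trajectory. Closed co-$\V$-trajectories at dimension $q$ encode the same cyclic sequence of matched pairs as closed $\V$-trajectories at dimension $q+1$ (one merely reads the alternation of dimensions in reverse), and these are excluded by the acyclicity of $\V$ at all dimensions. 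This justifies well-definedness of the involution and completes the proof.
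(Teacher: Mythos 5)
Your proof is correct, but it takes a genuinely different route from the paper. The paper proves this lemma by induction on $|\V|$: it strips off one matched pair $(\alpha_1,\eta_1)$ with $\alpha_1\in U_q^{(\V)}$, applies the inductive hypothesis to the smaller gradient field $\W=\V\setminus\{(\alpha_1,\eta_1)\}$, and then carries out a fairly long coordinate computation to pass from the $\W$-data to the $\V$-data. Your approach instead exploits linearity in $c$ and checks the identity on the basis $\widetilde{S}_q^{(\V)}$ supplied by Lemma~\ref{l-basis}. The two easy cases ($c\in\overrightarrow{\crit}_q^{(\V)}$ and $c\in D_q^{(\V)}$) follow from the disjointness of the supports $\{\sigma\}\cup U_q$ and $\{\sigma'\}\cup D_q$; the substantive case $c=\widetilde{\alpha}=\partial(\V(\alpha))$ is handled by a sign-reversing involution on the set of co-trajectories from $\sigma$ whose terminal face is a facet of $\gamma=\V(\alpha)$: extend by $\delta\leftarrow\gamma\leftarrowtail\alpha$ when $\trr(Q)\neq\alpha$, truncate the forced final triple when $\trr(Q)=\alpha$, and observe the contributions $w(Q)[\gamma,\trr(Q)]$ cancel in pairs because $[\gamma,\alpha]^2=1$. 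This is cleaner and more conceptual than the paper's inductive bookkeeping, and it isolates exactly where the acyclicity of $\V$ is used.

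One small inaccuracy worth noting: your stated ``main obstacle'' --- that extending $Q$ requires $\alpha$ never to appear among the $\beta_i$ of $Q$ --- is not actually needed for well-definedness. The definition of a co-$\V$-trajectory in this paper imposes only the local conditions $(\beta_i,\tau_i)\in\V$ and $\beta_{i-1}(\neq\beta_i)\subsetneq\tau_i$; there is no global distinctness requirement on the $\beta_i$, so the appended triple $\delta\leftarrow\gamma\leftarrowtail\alpha$ always yields a valid co-trajectory provided $\delta\neq\alpha$ (which you have by case assumption). The map you describe is already a fixed-point-free involution without the extra claim, since the final triple of any co-trajectory ending at $\alpha$ is forced (as $\V(\alpha)=\gamma$ and $\V$ is a matching) and truncating it inverts the extension. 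Your digression proving that acyclicity forbids nontrivial closed co-trajectories is correct and true --- reversing the alternation of dimensions turns such a closed co-trajectory into a closed $\V$-trajectory at dimension $q+1$, and the pairing being a matching forces the successive $\gamma$'s to differ --- but it is unnecessary here; where that fact is genuinely needed is to justify that the sum defining $\overleftarrow{\sigma}^{(\V)}$ is finite, which the paper assumes implicitly. Aside from this harmless detour, the argument is sound.
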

    \begin{proof}
    	We will prove this lemma by induction on the cardinality of the gradient vector field $\V$. When $|\V|=0$, $\widetilde{S}_q^{(\V)}=S_q$ and $\overrightarrow{\sigma}^{(\V)}=\overleftarrow{\sigma}^{(\V)}=\sigma$. Hence, our hypothesis is true.
    	
    	Let our hypothesis is true for the gradient vector fields with cardinality less than $n$, and let $|\V|=n$. We recall that, $S_q= \crit_q^{(\V)} \sqcup U_q^{(\V)} \sqcup D_q^{\V}$ and $\widetilde{S}_q^{(\V)}= \overrightarrow{\crit}_q^{(\V)} \sqcup \widetilde{U}_q^{(\V)} \sqcup D_q^{\V}$.

    	\textbf{Case 1:} In this case we assume that $U_q^{(\V)}\neq \emptyset$. Let, $\crit_q^{(\V)}=\{\sigma_1, \ldots, \sigma_p\}$, $U_{q}^{(\V)}=\{\alpha_1,\ldots,\alpha_l\}$ and $D_q^{(\V)}=\{\beta_1,\ldots,\beta_m\}$.
    	
    	 Let $\V(\alpha_j^{(q)})=\eta_j^{(q+1)}$, for all $j\in \{1,\ldots, l\}$. We define $\W=\V \setminus (\alpha_1^{(q)},\eta_1^{(q+1)})$. We note that, $\W$ is also a gradient vector field on $\K$, with $|\W|=n-1$. We have,
         \[\widetilde{S}_q^{(\W)}=\{\overrightarrow{\sigma_1}^{(\W)},\ldots, \overrightarrow{\sigma_p}^{(\W)}, \overrightarrow{\alpha_1}^{(\W)}, \widetilde{\alpha}_2, \ldots, \widetilde{\alpha}_l,, \beta_1, \ldots, \beta_m\}.\]
    	With respect to the basis $\widetilde{S}_q^{(\W)}$, let $\xi$ have the following expression   
    	\begin{align} \label{xi}
    		  \xi= & \sum_{i=1}^{p} s_i\cdot \overrightarrow{\sigma_i}^{(\W)} + t_1 \cdot\overrightarrow{\alpha_1}^{(\W)}  + \sum_{j=2}^{l} t_j\cdot \widetilde{\alpha}_j + \sum_{k=1}^{m}r_k \cdot\beta_k.
    	\end{align}
    	Here, $s_i,t_j,r_k \in \mathbb{Z}$. Now, since our hypothesis is true for $\W$, we have
    	\begin{equation} \label{hyp}
    		s_i(=\langle \xi, \overrightarrow{\sigma_i}^{(\W)}\rangle_{\widetilde{S}_q^{(\W)}})= \langle \overleftarrow{\sigma_i}^{(\W)},\xi\rangle_{S_q} \text{ and } t_1(=\langle \xi,  \overrightarrow{\alpha_1}^{(\W)}\rangle_{\widetilde{S}_q^{(\W)}})= \langle \overleftarrow{\alpha_1}^{(\W)},\xi\rangle_{S_q}.
    	\end{equation}
    	
    	From Lemma~\ref{l3.3} , we get
    	\begin{align*}
    		\partial(\overrightarrow{\eta_1}^{(\W)})&=\sum_{i=1}^{p} \sum_{\substack{P:P \text{ is a}\\ \W \text{-trajectory,}\\ \inn(P)=\eta_1}}w(P)\cdot [ \trr(P),\sigma_i ]  \cdot \overrightarrow{\sigma_i}^{(\W)} + \sum_{\substack{P:P \text{ is a}\\ \W \text{-trajectory,}\\ \inn(P)=\eta_1}}w(P)\cdot [ \trr(P),\alpha_1 ]  \cdot \overrightarrow{\alpha_1}^{(\W)}\\
    		\implies  \partial(\overrightarrow{\eta_1}^{(\W)})&=\sum_{i=1}^{p} \sum_{\substack{P:P \text{ is a}\\ \W \text{-trajectory,}\\ \inn(P)=\eta_1}}w(P)\cdot [ \trr(P),\sigma_i ]  \cdot \overrightarrow{\sigma_i}^{(\W)} + [ \eta_1,\alpha_1 ] \cdot \overrightarrow{\alpha_1}^{(\W)}~ \text{ (by Observation~\ref{Ob2.13}(ii))}\\
    		\implies~~~~ \overrightarrow{\alpha_1}^{(\W)}&= [ \eta_1,\alpha_1 ] \cdot\partial(\overrightarrow{\eta_1}^{(\W)})- [ \eta_1,\alpha_1 ] \cdot\sum_{i=1}^{p} \sum_{\substack{P:P \text{ is a}\\ \W \text{-trajectory,}\\ \inn(P)=\eta_1}}w(P)\cdot [ \trr(P),\sigma_i ]  \cdot \overrightarrow{\sigma_i}^{(\W)}\\
    		&= [ \eta_1,\alpha_1 ] \cdot\partial(\overrightarrow{\eta_1}^{(\W)})+\sum_{i=1}^{p} \sum_{\substack{P:P \text{ is a}\\ \W \text{-trajectory,}\\ \inn(P)=\eta_1}}(- [ \eta_1,\alpha_1 ] \cdot w(P)\cdot [ \trr(P),\sigma_i ])  \cdot \overrightarrow{\sigma_i}^{(\W)}\\
    		&= [ \eta_1,\alpha_1 ] \cdot\partial(\overrightarrow{\eta_1}^{(\W)})+\sum_{i=1}^{p} \sum_{\substack{Q:Q \text{ is a}\\ \text{co-}\V \text{-trajectory,}\\ \inn(Q)=\sigma_i , \\ \trr(Q)=\alpha_1}}w(Q)  \cdot \overrightarrow{\sigma_i}^{(\W)}\\
    		&= [ \eta_1,\alpha_1 ] \cdot\partial (\sum_{\substack{P:P \text{ is a }\\ \W \text{-trajectory,}\\ \inn(P)=\eta_1}}w(P)\cdot \trr(P))+\sum_{i=1}^{p} \sum_{\substack{Q:Q \text{ is a}\\ \text{co-}\V \text{-trajectory,}\\ \inn(Q)=\sigma_i , \\ \trr(Q)=\alpha_1}}w(Q)  \cdot \overrightarrow{\sigma_i}^{(\W)}\\
    		&= [ \eta_1,\alpha_1 ] \cdot\partial (\sum_{j=1}^{l}  \sum_{\substack{P:P \text{ is a }\\ \W \text{-trajectory,}\\ \inn(P)=\eta_1 ,\\ \trr(P)=\eta_j }}w(P)\cdot \eta_j)+\sum_{i=1}^{p} \sum_{\substack{Q:Q \text{ is a}\\ \text{co-}\V \text{-trajectory,}\\ \inn(Q)=\sigma_i , \\ \trr(Q)=\alpha_1}}w(Q)  \cdot \overrightarrow{\sigma_i}^{(\W)} \\
    		&= [ \eta_1,\alpha_1 ] \cdot \sum_{j=1}^{l}  \sum_{\substack{P:P \text{ is a }\\ \W \text{-trajectory,}\\ \inn(P)=\eta_1 ,\\ \trr(P)=\eta_j}}w(P)\cdot \partial(\eta_j)+\sum_{i=1}^{p} \sum_{\substack{Q:Q \text{ is a}\\ \text{co-}\V \text{-trajectory,}\\ \inn(Q)=\sigma_i , \\ \trr(Q)=\alpha_1}}w(Q)  \cdot \overrightarrow{\sigma_i}^{(\W)}\\
    		&= [ \eta_1,\alpha_1 ] \cdot \sum_{j=1}^{l}  \sum_{\substack{P:P \text{ is a }\\ \W \text{-trajectory,}\\ \inn(P)=\eta_1 ,\\ \trr(P)=\eta_j}}w(P)\cdot \widetilde{\alpha}_j+\sum_{i=1}^{p} \sum_{\substack{Q:Q \text{ is a}\\ \text{co-}\V \text{-trajectory,}\\ \inn(Q)=\sigma_i , \\ \trr(Q)=\alpha_1}}w(Q)  \cdot \overrightarrow{\sigma_i}^{(\W)}\\
    		&=  \sum_{j=1}^{l}  [ \eta_1,\alpha_1 ] \cdot \sum_{\substack{P:P \text{ is a }\\ \W \text{-trajectory,}\\ \inn(P)=\eta_1,\\ \trr(P)=\eta_j}}w(P)\cdot \widetilde{\alpha}_j+\sum_{i=1}^{p} \sum_{\substack{Q:Q \text{ is a}\\ \text{co-}\V \text{-trajectory,}\\ \inn(Q)=\sigma_i , \\ \trr(Q)=\alpha_1}}w(Q)  \cdot \overrightarrow{\sigma_i}^{(\W)}.
    	    \end{align*}
    	
    	By putting this value of $\overrightarrow{\alpha}_1^{(\W)}$ in Equation~\eqref{xi}, we get 
    	
    	\begin{align} \label{nxi}
    	\nonumber \xi= & \sum_{i=1}^{p} s_i \cdot \overrightarrow{\sigma_i}^{(\W)} + t_1 \cdot \left(  \sum_{j=1}^{l}  [ \eta_1,\alpha_1 ] \cdot \sum_{\substack{P:P \text{ is a }\\ \W \text{-trajectory,}\\ \inn(P)=\eta_1 ,\\ \trr(P)=\eta_j}}w(P)\cdot \widetilde{\alpha}_j  
        +  \sum_{i=1}^{p} \sum_{\substack{Q:Q \text{ is a}\\ \text{co-}\V \text{-trajectory,}\\ \inn(Q)=\sigma_i ,\\ \trr(Q)=\alpha_1}}w(Q)  \cdot \overrightarrow{\sigma_i}^{(\W)} \right)  \\
        \nonumber & + \sum_{j=2}^{l} t_j\cdot \widetilde{\alpha}_j + \sum_{k=1}^{m}r_k \cdot\beta_k\\
        \nonumber = & \sum_{i=1}^{p} s_i \cdot \overrightarrow{\sigma_i}^{(\W)} + \sum_{j=1}^{l} t_1 \cdot  [ \eta_1,\alpha_1 ] \cdot \sum_{\substack{P:P \text{ is a }\\ \W \text{-trajectory,}\\ \inn(P)=\eta_1 ,\\ \trr(P)=\eta_j}}w(P)\cdot \widetilde{\alpha}_j  
        +  \sum_{i=1}^{p} t_1 \cdot \sum_{\substack{Q:Q \text{ is a}\\ \text{co-}\V \text{-trajectory,}\\ \inn(Q)=\sigma_i ,\\ \trr(Q)=\alpha_1}}w(Q)  \cdot \overrightarrow{\sigma_i}^{(\W)}  \\
        \nonumber & + \sum_{j=2}^{l} t_j\cdot \widetilde{\alpha}_j + \sum_{k=1}^{m}r_k \cdot\beta_k\\
        \nonumber = & \sum_{i=1}^{p} \left( s_i +  t_1 \cdot \sum_{\substack{Q:Q \text{ is a}\\ \text{co-}\V \text{-trajectory,}\\ \inn(Q)=\sigma_i ,\\ \trr(Q)=\alpha_1}}w(Q) \right)\cdot \overrightarrow{\sigma_i}^{(\W)} + t_1 \cdot  [ \eta_1,\alpha_1 ] \cdot \widetilde{\alpha}_1 \\
        \nonumber & + \sum_{j=2}^{l} \left( t_j + t_1 \cdot  [ \eta_1,\alpha_1 ] \cdot \sum_{\substack{P:P \text{ is a }\\ \W \text{-trajectory,}\\ \inn(P)=\eta_1 ,\\ \trr(P)=\eta_j}}w(P) \right) \cdot \widetilde{\alpha}_j + \sum_{k=1}^{m}r_k \cdot\beta_k.\\
    	\end{align}
        Note that the Equation~\eqref{nxi} is the representation of $\xi$, with respect to the basis $\widetilde{S_q}^{(\V)}$. So, for any $i\in \{1,\ldots, p\}$, the coefficient of $\overrightarrow{\sigma_i}^{(\V)}$ in $\xi$ with respect to the basis $\widetilde{S}_q^{(\V)}$, 
    	\begin{align*} 
    	&\langle \xi, \overrightarrow{\sigma_i}^{(\V)}\rangle_{\widetilde{S}_q^{(\V)}} \\
    	= & s_i +  t_1 \cdot \sum_{\substack{Q:Q \text{ is a}\\ \text{co-}\V \text{-trajectory,}\\ \inn(Q)=\sigma_i ,\\ \trr(Q)=\alpha_1}}w(Q)\\
    	=&\left\langle\overleftarrow{\sigma_i}^{(\W)},\xi\right\rangle+ \left\langle\overleftarrow{\alpha_1}^{(\W)},\xi\right\rangle \cdot \sum_{\substack{Q:Q \text{ is a}\\ \text{co-}\V \text{-trajectory,}\\ \inn(Q)=\sigma_i ,\\ \trr(Q)=\alpha_1}}w(Q)~~\text{(from Equation~\eqref{hyp})}\\
    	=&\left\langle\sum_{\substack{Q:Q \text{ is a}\\ \text{co-}\W \text{-trajectory,}\\ \inn(Q)=\sigma_i}}w(Q)\cdot \trr(Q),~\xi\right\rangle+ \left\langle\sum_{\substack{Q:Q \text{ is a}\\ \text{co-}\W \text{-trajectory,}\\ \inn(Q)=\alpha_1}}w(Q)\cdot \trr(Q),~\xi\right\rangle \cdot \sum_{\substack{Q:Q \text{ is a}\\ \text{co-}\V \text{-trajectory,}\\ \inn(Q)=\sigma_i ,\\ \trr(Q)=\alpha_1}}w(Q)\\
    	=&\left\langle\sum_{\substack{Q:Q \text{ is a}\\ \text{co-}\W \text{-trajectory,}\\ \inn(Q)=\sigma_i}}w(Q)\cdot \trr(Q),~\xi\right\rangle+ \left\langle \sum_{\substack{Q:Q \text{ is a}\\ \text{co-}\V \text{-trajectory,}\\ \inn(Q)=\sigma_i ,\\ \trr(Q)=\alpha_1}}w(Q) \cdot \sum_{\substack{Q:Q \text{ is a}\\ \text{co-}\W \text{-trajectory,}\\ \inn(Q)=\alpha_1}}w(Q)\cdot \trr(Q),~ \xi\right\rangle\\
    	=&\left\langle\sum_{\substack{Q:Q \text{ is a}\\ \text{co-}\V \text{-trajectory,}\\ \inn(Q)=\sigma_i ,\\ (\alpha_1,\eta_1)\notin Q}}w(Q)\cdot \trr(Q),~\xi\right\rangle+ \left\langle\sum_{\substack{Q:Q \text{ is a}\\ \text{co-}\V \text{-trajectory,}\\ \inn(Q)=\sigma_i ,\\ (\alpha_1,\eta_1)\in Q}}w(Q)\cdot \trr(Q),~\xi\right\rangle\\
    	=&\left\langle\sum_{\substack{Q:Q \text{ is a}\\ \text{co-}\V \text{-trajectory,}\\ \inn(Q)=\sigma_i ,\\ (\alpha_1,\eta_1)\notin Q}}w(Q)\cdot \trr(Q)+\sum_{\substack{Q:Q \text{ is a}\\ \text{co-}\V \text{-trajectory,}\\ \inn(Q)=\sigma_i ,\\ (\alpha_1,\eta_1)\in Q}}w(Q)\cdot \trr(Q),~\xi\right\rangle\\
    	=&\left\langle\sum_{\substack{Q:Q \text{ is a}\\ \text{co-}\V \text{-trajectory,}\\ \inn(Q)=\sigma_i}}w(Q)\cdot \trr(Q),~\xi\right\rangle\\
    	=&\langle\overleftarrow{\sigma_i}^{(\V)},~\xi\rangle_{S_q}.
    	\end{align*}
    	
    		\textbf{Case 2:}
    	When $U_q^{(\V)}=\emptyset$, we have
    	\begin{align} 
    		\nonumber \xi &= \sum_{i=1}^p \langle \xi,\overrightarrow{\sigma_i}^{(\V)} \rangle_{\widetilde{S}_q^{(\V)}} \cdot \overrightarrow{\sigma_i}^{(\V)}+ \sum_{k=1}^{m} \langle \xi, \beta_k \rangle_{\widetilde{S}_q^{(\V)}} \cdot \beta_k \\
        	\nonumber &= \sum_{i=1}^p \langle \xi, \overrightarrow{\sigma_i}^{(\V)} \rangle_{\widetilde{S}_q^{(\V)}}  \left( \sum_{\substack{P:P \text{ is a}\\ \V \text{-trajectory,}\\ \inn(P)=\sigma_i}}w(P) \cdot \trr(P) \right)+ \sum_{k=1}^{m} \langle \xi, \beta_k \rangle_{\widetilde{S}_q^{(\V)}} \cdot\beta_k \\
    		\nonumber  &= \sum_{i=1}^p \langle \xi,  \overrightarrow{\sigma_i}^{(\V)} \rangle_{\widetilde{S}_q^{(\V)}}  \left( \sigma_i + \sum_{k=1}^{m}\sum_{\substack{P:P \text{ is a}\\ \V \text{-trajectory,}\\ \inn(P)=\sigma_i, \\ \trr(P)=\beta_k}}w(P) \cdot \beta_k \right) + \sum_{k=1}^{m} \langle \xi, \beta_k \rangle_{\widetilde{S}_q^{(\V)}} \cdot\beta_k \\
    		\nonumber  &= \sum_{i=1}^p \langle \xi,  \overrightarrow{\sigma_i}^{(\V)} \rangle_{\widetilde{S}_q^{(\V)}}  \cdot \sigma_i +  \sum_{i=1}^p \langle \xi,  \overrightarrow{\sigma_i}^{(\V)} \rangle_{\widetilde{S}_q^{(\V)}} \cdot \sum_{k=1}^{m}\sum_{\substack{P:P \text{ is a}\\ \V \text{-trajectory,}\\ \inn(P)=\sigma_i, \\ \trr(P)=\beta_k}}w(P) \cdot \beta_k + \sum_{k=1}^{m} \langle \xi, \beta_k \rangle_{\widetilde{S}_q^{(\V)}} \cdot\beta_k \\
    		&= \sum_{i=1}^p \left( \langle \xi, \overrightarrow{\sigma_i}^{(\V)} \rangle_{\widetilde{S}_q^{(\V)}} \right) \sigma_i  + \sum_{k=1}^{m}\left( \sum_{i=1}^p \langle \xi,  \overrightarrow{\sigma_i}^{(\V)} \rangle_{\widetilde{S}_q^{(\V)}} \cdot \sum_{\substack{P:P \text{ is a}\\ \V \text{-trajectory,}\\ \inn(P)=\sigma_i \\ \trr(P)=\beta_k}}w(P)+\langle \xi, \beta_k \rangle_{\widetilde{S}_q^{(\V)}} \right)\beta_k. \label{e17}
    	\end{align}
        Equation~\eqref{e17} is the representation of $\xi$ with respect to the basis $S_q$, and hence, for $i \in \{1,\ldots, p\}$, we have
    	\begin{align*}
    		&\langle \xi, \sigma_i\rangle_{S_q}=\langle \xi, \overrightarrow{\sigma_i}^{(\V)}\rangle_{\widetilde{S}_q^{(\V)}}\\
    	   	\implies & \langle \xi, \overleftarrow{\sigma_i}^{(\V)}\rangle_{S_q}=\langle \xi, \overrightarrow{\sigma_i}^{(\V)}\rangle_{\widetilde{S}_q^{(\V)}}~ \text{(when $U_q^{(\V)}=\emptyset$, $\overleftarrow{\sigma_i}^{(\V)}=\sigma_i$)}\\
    	   	\implies & \langle \overleftarrow{\sigma_i}^{(\V)}, \xi \rangle_{S_q}=\langle \xi, \overrightarrow{\sigma_i}^{(\V)}\rangle_{\widetilde{S}_q^{(\V)}}.	
    	\end{align*}

    \end{proof}
   
	\begin{lemma} \label{3.4.}
		Let $\K$ be a $d$-dimensional simplicial complex with a gradient vector field $\V$ on it. Let $\phi_\#: C_\#(\K, \mathbb{Z}) \rightarrow C_\#(\K,\mathbb{Z})$ be a chain map. Then for any $\beta \in D_q^{(\V)}$, $1 \leq q \leq d$,
		\[\langle\phi_{q}(\beta),\beta \rangle_{\widetilde{S}_q^{(\V)}}=\langle \phi_{q-1}(\partial(\beta)),\partial(\beta) \rangle_{\widetilde{S}_{q-1}^{(\V)}}.\]
	\end{lemma}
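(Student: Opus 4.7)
The strategy is to expand $\phi_q(\beta)$ in the basis $\widetilde{S}_q^{(\V)}$, apply $\partial$, and compare with $\phi_{q-1}(\partial\beta) = \partial\phi_q(\beta)$ via the chain-map hypothesis. Since $\beta \in D_q^{(\V)}$, let $\alpha \in U_{q-1}^{(\V)}$ be the unique simplex with $(\alpha,\beta) \in \V$; then $\V(\alpha) = \beta$, so $\widetilde{\alpha} = \partial(\beta)$ is itself a basis element of $\widetilde{S}_{q-1}^{(\V)}$. I would first write
\[
\phi_q(\beta) = \sum_{i} s_i\,\overrightarrow{\sigma_i}^{(\V)} + \sum_{j} t_j\,\widetilde{\alpha}_j + \sum_{k} r_k\,\beta_k
\]
in the basis $\widetilde{S}_q^{(\V)}$, where $\sigma_i$ ranges over $\crit_q^{(\V)}$, $\alpha_j$ over $U_q^{(\V)}$, and $\beta_k$ over $D_q^{(\V)}$, with $\beta = \beta_{k_0}$ for a unique index $k_0$; the left-hand side of the identity is then exactly $r_{k_0}$.

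Next I would analyse $\partial\phi_q(\beta)$ summand by summand using three clean observations. First, for each $j$ we have $\partial(\widetilde{\alpha}_j) = \partial\partial(\V(\alpha_j)) = 0$. Second, Lemma~\ref{l3.3} shows that each $\partial(\overrightarrow{\sigma_i}^{(\V)})$ is a $\mathbb{Z}$-combination of $\overrightarrow{\sigma}^{(\V)}$ for $\sigma \in \crit_{q-1}^{(\V)}$, and therefore has zero component on every element of $\widetilde{U}_{q-1}^{(\V)}$. Third, for each $\beta_k \in D_q^{(\V)}$, letting $\alpha'_k \in U_{q-1}^{(\V)}$ be the facet with $\V(\alpha'_k) = \beta_k$ (this is the bijection $U_{q-1}^{(\V)} \leftrightarrow D_q^{(\V)}$ induced by $\V$), we get $\partial(\beta_k) = \partial(\V(\alpha'_k)) = \widetilde{\alpha}'_k$, which is a single basis element of $\widetilde{S}_{q-1}^{(\V)}$.

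Combining these three observations,
\[
\phi_{q-1}(\partial\beta) \;=\; \partial\phi_q(\beta) \;=\; \sum_i s_i\,\partial(\overrightarrow{\sigma_i}^{(\V)}) + \sum_k r_k\,\widetilde{\alpha}'_k,
\]
and since the first sum is supported on $\overrightarrow{\crit}_{q-1}^{(\V)}$, the coefficient of $\widetilde{\alpha} = \widetilde{\alpha}'_{k_0}$ in the basis $\widetilde{S}_{q-1}^{(\V)}$ is simply $r_{k_0}$, matching the left-hand side.

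I do not foresee any significant obstacle here: the proof is essentially a bookkeeping argument around Lemma~\ref{l3.3} together with the two clean identities $\partial\widetilde{\alpha}_j = 0$ and $\partial\beta_k = \widetilde{\alpha}'_k$. The only point requiring a moment's care is to invoke the bijection $D_q^{(\V)} \leftrightarrow U_{q-1}^{(\V)}$ induced by $\V$, so that the $\widetilde{\alpha}'_k$ arising from distinct $\beta_k$'s are distinct basis elements of $\widetilde{S}_{q-1}^{(\V)}$ and so that $\beta$ itself genuinely appears as some $\beta_{k_0}$.
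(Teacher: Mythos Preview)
Your proposal is correct and follows essentially the same route as the paper: expand $\phi_q(\beta)$ in the basis $\widetilde{S}_q^{(\V)}$, apply $\partial$ using the chain-map relation, kill the $\widetilde{\alpha}_j$ terms via $\partial\partial=0$, and read off the coefficient of $\partial\beta=\widetilde{\alpha}'_{k_0}$ in $\widetilde{S}_{q-1}^{(\V)}$. If anything, your write-up is more explicit than the paper's, which jumps directly to $\langle\phi_{q-1}(\partial\beta),\partial\beta\rangle_{\widetilde{S}_{q-1}^{(\V)}}=r_1$ without spelling out the role of Lemma~\ref{l3.3} or the bijection $D_q^{(\V)}\leftrightarrow U_{q-1}^{(\V)}$.
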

	\begin{proof}
		Let  $\overrightarrow{\crit}_q^{(\V)}(\K)= \{\overrightarrow{\sigma_1}^{(\V)}, \ldots , \overrightarrow{\sigma_p}^{(\V)}\}$, $\widetilde{U}_q^{(\V)}= \{\widetilde{\alpha}_1,\ldots, \widetilde{\alpha}_l\}$ and $D_q^{(\V)}= \{\beta_1,\ldots,\beta_m\}$. Let
		\[\phi_q(\beta)= \sum_{i=1}^p s_i\overrightarrow{\sigma}_i^{(\V)} + \sum_{j=1}^l t_j \widetilde{\alpha}_j^{(\V)} + \sum_{k=1}^m r_k\beta_k,\]
		Without loss of generality, let us assume $\beta=\beta_1$. Then, we have $\langle \phi_{q}(\beta),\beta \rangle_{\widetilde{S}_q^{(\V)}}= r_1$. 
		
			\begin{figure}[h]
			\centering
			\[\begin{tikzcd}
				\cdots && {C_{q+1}(\K,\mathbb{Z})} && {C_q(\K, \mathbb{Z})} && {C_{q-1}(\K, \mathbb{Z})} && \cdots \\
				\\
				\cdots && {C_{q+1}(\K, \mathbb{Z})} && {C_{q}(\K, \mathbb{Z})} && {C_{q-1}(\K, \mathbb{Z})} && \cdots
				\arrow["{\partial_{q+2}}", from=1-1, to=1-3]
				\arrow["{\partial_{q+1}}", from=1-3, to=1-5]
				\arrow["{\phi_{q+1}}", from=1-3, to=3-3]
				\arrow["{\partial_q}", from=1-5, to=1-7]
				\arrow["{\phi_q}", from=1-5, to=3-5]
				\arrow["{\partial_{q-1}}", from=1-7, to=1-9]
				\arrow["{\phi_{q-1}}", from=1-7, to=3-7]
				\arrow["{\partial_{q+2}}", from=3-1, to=3-3]
				\arrow["{\partial_{q+1}}", from=3-3, to=3-5]
				\arrow["{\partial_{q}}", from=3-5, to=3-7]
				\arrow["{\partial_{q-1}}", from=3-7, to=3-9]
			\end{tikzcd}\]
			\caption{Commutative diagram corresponding to the chain map $\phi_\#:C_\#(\K, \mathbb{Z})\rightarrow C_\#(\K, \mathbb{Z})$.}\label{f1}
			\label{fig:enter-label}
		\end{figure}

		By the commutativity of the diagram (Figure~\ref{f1}), we get
		\begin{align*}
			&\phi_{q-1} (\partial (\beta))=\partial(\phi_{q}(\beta))\\
			\implies  &\phi_{q-1} (\partial (\beta))=\partial(\alpha)\\
			\implies  &\phi_{q-1} (\partial (\beta))=\partial( \sum_{i=1}^p s_i\overrightarrow{\sigma}_i^{(\V)} + \sum_{j=1}^l t_j \widetilde{\alpha}_j^{(\V)} + \sum_{k=1}^m r_k\beta_k)\\
			\implies  &\phi_{q-1} (\partial (\beta))= \sum_{i=1}^p s_i\partial(\overrightarrow{\sigma}_i^{(\V)}) + \sum_{j=1}^l t_j \partial(\widetilde{\alpha}_j^{(\V)}) + \sum_{k=1}^m r_k\partial(\beta_k)\\
			\implies  &\phi_{q-1} (\partial (\beta))=\sum_{i=1}^p s_i \partial(\overrightarrow{\sigma}_i^{(\V)}) + \sum_{k=1}^m r_k \partial(\beta_k)\\
			\implies  & \langle \phi_{q-1} (\partial (\beta)),\partial(\beta)\rangle_{\widetilde{S}_q^{(\V)}}= r_1.
		\end{align*}
		Hence, $\langle \phi_{q}(\beta),\beta \rangle_{\widetilde{S}_q^{(\V)}}=\langle \phi_{q-1} (\partial (\beta)),\partial(\beta)\rangle_{\widetilde{S}_q^{(\V)}}$.
	\end{proof}

    Now we proceed to prove the combinatorial Hopf trace formula.
   
	\begin{proof}[Proof of Theorem~\ref{t1.10}]
		By Lemma~\ref{l-basis}, for each $0\leq q \leq d$, $\widetilde{S}_q^{(\V)}$ is a basis of $C_q(\K, \mathbb{Z})$. Expanding the left-hand side of the Equation~\eqref{eq-main}, with respect to the basis $\widetilde{S}_q^{(\V)}$, and noting that the trace is independent of the choice of basis, we get
		\begin{align*}
			&\sum_{q=0}^d (-1)^q \operatorname{tr}(\phi_q)\\
			= & \sum_{q=0}^d (-1)^q  \left( \sum_{\overrightarrow{\sigma}\in\overrightarrow{\crit}_q^{(\V)}(\K)} \langle \phi_q(\overrightarrow{\sigma}^{(\V)}),\overrightarrow{\sigma}^{(\V)}\rangle_{\widetilde{S}_q^{(\V)}} + \sum_{\widetilde{\alpha}\in \widetilde{U}_q^{(\V)}} \langle \phi^*_q(\widetilde{\alpha}),\widetilde{\alpha}\rangle_{\widetilde{S}_q^{(\V)}} + \sum_{\beta\in D_q^{(\V)}} \langle  \phi^*_q(\beta), \beta \rangle_{\widetilde{S}_q^{(\V)}} \right) \\
		\end{align*}
		\begin{align*}
			= & \sum_{q=0}^d (-1)^q  \sum_{\sigma\in\crit_q^{(\V)}(\K)} \langle \phi_q(\overrightarrow{\sigma}^{(\V)}),\overrightarrow{\sigma}^{(\V)}\rangle_{\widetilde{S}_q^{(\V)}} + \sum_{q=0}^d (-1)^q  \sum_{\alpha\in U_q^{(\V)}} \langle \phi^*_q(\widetilde{\alpha}),\widetilde{\alpha}\rangle_{\widetilde{S}_q^{(\V)}}\\ &+  \sum_{q=0}^d (-1)^q \sum_{\beta\in D_q^{(\V)}} \langle  \phi^*_q(\beta), \beta \rangle_{\widetilde{S}_q^{(\V)}}\\
			= & \sum_{q=0}^d (-1)^q  \sum_{\sigma\in\crit_q^{(\V)}(\K)} \langle \phi_q(\overrightarrow{\sigma}^{(\V)}),\overrightarrow{\sigma}^{(\V)}\rangle_{\widetilde{S}_q^{(\V)}} + \sum_{q=0}^{d-1} (-1)^q  \sum_{\alpha\in U_q^{(\V)}} \langle \phi^*_q(\widetilde{\alpha}),\widetilde{\alpha}\rangle_{\widetilde{S}_q^{(\V)}}\\ &+  \sum_{q=1}^d (-1)^q \sum_{\beta\in D_q^{(\V)}} \langle  \phi^*_q(\beta), \beta \rangle_{\widetilde{S}_q^{(\V)}}\\
			= & \sum_{q=0}^d (-1)^q  \sum_{\sigma\in\crit_q^{(\V)}(\K)} \langle \phi_q(\overrightarrow{\sigma}^{(\V)}),\overrightarrow{\sigma}^{(\V)}\rangle_{\widetilde{S}_q^{(\V)}} + \sum_{q=1}^{d} (-1)^{q-1}  \sum_{\alpha\in U_{q-1}^{(\V)}} \langle \phi^*_{q-1}(\widetilde{\alpha}),\widetilde{\alpha}\rangle_{\widetilde{S}_{q-1}^{(\V)}} \\
			&+  \sum_{q=1}^d (-1)^q \sum_{\beta\in D_q^{(\V)}} \langle  \phi^*_q(\beta), \beta \rangle_{\widetilde{S}_q^{(\V)}}\\
			= & \sum_{q=0}^d (-1)^q  \sum_{\sigma\in\crit_q^{(\V)}(\K)} \langle \phi_q(\overrightarrow{\sigma}^{(\V)}),\overrightarrow{\sigma}^{(\V)}\rangle_{\widetilde{S}_q^{(\V)}}\\
			& + \sum_{q=1}^{d} (-1)^{q-1} \left( \sum_{\alpha\in U_{q-1}^{(\V)}} \langle \phi^*_{q-1}(\widetilde{\alpha}),\widetilde{\alpha}\rangle_{\widetilde{S}_{q-1}^{(\V)}} - \sum_{\beta\in D_q^{(\V)}} \langle  \phi^*_q(\beta), \beta \rangle_{\widetilde{S}_q^{(\V)}} \right)\\
			= & \sum_{q=0}^d (-1)^q  \sum_{\sigma\in\crit_q^{(\V)}(\K)} \langle \phi_q(\overrightarrow{\sigma}^{(\V)}),\overrightarrow{\sigma}^{(\V)}\rangle_{\widetilde{S}_q^{(\V)}} \\
			&+ \sum_{q=1}^{d} (-1)^{q-1} \left( \sum_{\beta\in D_{q}^{(\V)}} \langle \phi^*_{q-1}(\partial(\beta)),\partial(\beta)\rangle_{\widetilde{S}_{q-1}^{(\V)}} - \sum_{\beta\in D_q^{(\V)}} \langle  \phi^*_q(\beta), \beta \rangle_{\widetilde{S}_q^{(\V)}} \right)\\
			= & \sum_{q=0}^d (-1)^q  \sum_{\sigma\in\crit_q^{(\V)}(\K)} \langle \phi_q(\overrightarrow{\sigma}^{(\V)}),\overrightarrow{\sigma}^{(\V)}\rangle_{\widetilde{S}_q^{(\V)}} \\
			&+ \sum_{q=1}^{d} (-1)^{q-1} \sum_{\beta\in D_{q}^{(\V)}} \left( \langle \phi^*_{q-1}(\partial(\beta)),\partial(\beta)\rangle_{\widetilde{S}_{q-1}^{(\V)}} - \langle  \phi^*_q(\beta), \beta \rangle_{\widetilde{S}_q^{(\V)}} \right)\\
			= & \sum_{q=0}^d (-1)^q  \sum_{\sigma\in\crit_q^{(\V)}(\K)} \langle \phi_q(\overrightarrow{\sigma}^{(\V)}),\overrightarrow{\sigma}^{(\V)}\rangle_{\widetilde{S}_q^{(\V)}} + 0 ~ \text{ (from Lemma~\ref{3.4.})}\\
			= & \sum_{q=0}^d (-1)^q  \sum_{\sigma\in\crit_q^{(\V)}(\K)} \langle \phi_q(\overrightarrow{\sigma}^{(\V)}),\overleftarrow{\sigma}^{(\V)}\rangle_{S_q} ~~\text{(from Lemma~\ref{l-3.4})}.
			\end{align*}

	\end{proof}
    \section{Combinatorial degree versions of generalized $\mathbb{Z}_p$- Tucker's lemma} \label{s4}
         We note that, an $d$-dimensional pseudomanifold $\s$ is a combinatorial sphere, if and only if there exists a maximal simplex $\sigma\in \s$, such that, $\s \setminus \{\sigma\}$ is collapsible.
         Now let us prove the following lemmas regarding the simplicial complex $\Delta^n$. 
     \begin{lemma} \label{l-4.2} For any maximal simplex $\sigma \in \Delta^n_{n-1}$ (the $(n-1)$-skeleton of $\Delta^n$), $\Delta^n_{n-1} \setminus \{\sigma\}$ is collapsible.
    \end{lemma}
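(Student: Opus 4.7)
The plan is to exhibit a gradient vector field $\V$ on $\K := \Delta^n_{n-1} \setminus \{\sigma\}$ whose unique critical simplex is a single vertex paired with $\emptyset$; by the definition of collapsibility this will suffice. Since $\sigma$ is a \emph{maximal} simplex of $\Delta^n_{n-1}$ (not of the full simplex on $V(\Delta^n)$), there is a vertex $v_0 \in V(\Delta^n)$ with $v_0 \notin \sigma$. I would construct the natural ``cone from $v_0$'' pairing
\[\V := \{(\emptyset,\{v_0\})\} \cup \{(\tau,\tau \cup \{v_0\}) : \tau \in \K,\ v_0 \notin \tau,\ \tau \neq \emptyset\}.\]

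The first step is to check that $\V$ is a well-defined discrete vector field on $\K$ with $\{v_0\}$ as the unique critical simplex. Every non-empty simplex of $\K$ either contains $v_0$ or does not, and $\tau \mapsto \tau \cup \{v_0\}$ is a bijection between these two classes, provided both endpoints of each pair stay inside $\K$. The only potential failures are (i) some $\tau \cup \{v_0\}$ equaling the top simplex $V(\Delta^n)$, or (ii) some entry equaling the removed face $\sigma$. Failure (i) would force $\tau = V(\Delta^n) \setminus \{v_0\}$, which by the maximality of $\sigma$ coincides with $\sigma$ and is therefore excluded from $\K$; failure (ii) is ruled out because $v_0 \notin \sigma$, so no pair in $\V$ can produce $\sigma$. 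Hence every simplex of $\K$ other than $\{v_0\}$ lies in exactly one pair, and $\{v_0\}$ is the unique critical simplex, critical by clause (ii) of the definition.

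The final step is to verify acyclicity. For a $\V$-trajectory $\beta_0,\alpha_1,\beta_1,\ldots,\alpha_r,\beta_r$ of dimension $q \geq 1$ with $r \geq 1$, each $\alpha_i$ is paired upward, so $v_0 \notin \alpha_i$ and $\beta_i = \alpha_i \cup \{v_0\}$ contains $v_0$ for all $i \geq 1$. If $\beta_0$ also contained $v_0$, then the only facet of $\beta_0$ avoiding $v_0$ would be $\beta_0 \setminus \{v_0\}$, forcing $\alpha_1 = \beta_0 \setminus \{v_0\}$ and hence $\beta_1 = \beta_0$, contradicting the requirement $\beta_0 \neq \beta_1$. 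So $v_0 \notin \beta_0$ while $v_0 \in \beta_r$, which gives $\beta_0 \neq \beta_r$ and precludes any nontrivial closed trajectory. In dimension $q = 0$, the sole pair involving $\emptyset$ is $(\emptyset,\{v_0\})$, so no nontrivial closed trajectory exists there either. I expect the main obstacle to be this brief case analysis on whether $\beta_0$ contains $v_0$; everything else reduces to bookkeeping on set inclusions.
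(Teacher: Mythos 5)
Your proof is correct and takes essentially the same approach as the paper: both construct the ``cone'' gradient vector field pairing each simplex with its union with the unique vertex $v_0 = a_n$ outside $\sigma$, making $\{v_0\}$ the sole critical simplex. The paper states the well-definedness and acyclicity as ``one can check''; you spell those checks out, and also state the vector field a bit more carefully by restricting $\tau$ to lie in $\K$ rather than all of $\Delta^n_{n-1}$.
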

    \begin{proof}
    	Let $V(\Delta^n_{n-1})=\{a_0,a_1,\ldots, a_n\}$. Without loss of generality, let $\sigma=\{a_0,a_1,\ldots,a_{n-1}\}$. We define a discrete vector field $\V$ on $V(\Delta^n_{n-1})$ as follows.
    	\[\V= \{(\alpha, \alpha \cup \{a_n\}): \alpha \in \Delta^n_{n-1} \text{ and } a_n\notin \alpha\}.\]
    	One can check that $\V$ is a gradient vector field on $\Delta^n_{n-1} \setminus \{\sigma\}$, where the only critical simplex is the $0$-simplex $\{a_n\}$.Hence, $\Delta^n_{n-1} \setminus \{\sigma\}$ is collapsible.
    \end{proof}
   
      \begin{lemma}\label{l4.3} For any maximal simplex $\sigma \in \Bd(\Delta^n_{n-1})$, $\Bd(\Delta^n_{n-1}) \setminus \{\sigma\}$ is collapsible. 
    \end{lemma}
    \begin{proof}
    	We prove this lemma by induction on the dimension $n$. When $n=1$, the result is clearly true. 
    	
    	Let us assume that our hypothesis is true for dimension $(n-1)$.
    	
       Now, let us consider $\Bd(\Delta^n_{n-1})$. Let $\sigma$ be a maximal simplex of $\Bd(\Delta^n_{n-1})$. Suppose that, $\sigma = (v_{\tau_{0}} v_{\tau_{1}}\ldots v_{\tau_{n-1}})$,
    	where $\tau_0,\tau_1,\ldots,\tau_{n-1} \in \Delta^n_{n-1}$ with $\tau_0^{(0)}\subsetneq \tau_1^{(1)} \subsetneq \ldots \subsetneq \tau_{n-1}^{(n-1)}$. Let, 
    	$\sigma^\prime= (v_{\tau_{0}} \ldots v_{\tau_{n-2}})$.
    	 
    	Now, observe that
    	\[\operatorname{Bd}(\Delta^n_{n-1})\setminus \{\sigma\}= \operatorname{Bd}(\Delta^n_{n-1}\setminus \{\tau_{n-1}\}) \cup (v_{\tau_{n-1}}*(\Bd(\operatorname{cl}(\tau_{n-1})\setminus\{\tau_{n-1}\})\setminus \{\sigma^\prime\})).\]
    	
    	Let $\K_1= \operatorname{Bd}(\Delta^n_{n-1}\setminus \{\tau_{n-1}\})$, $\K_2=\Bd(\operatorname{cl}(\tau_{n-1})\setminus\{\tau_{n-1}\})\setminus \{\sigma^\prime\}$, and $x= v_{\tau_{n-1}}$. Clearly, $\K_2$ is a subcomplex of $\K_1$. Now, by induction $\K_2$ is collapsible. Also note that, $x \notin V(\K_1)$.
    	Hence, by Corollary~\ref{c-4.4}, $\operatorname{Bd}(\Delta^n_{n-1})\setminus \{\sigma\} (= \K_1 \cup (x*\K_2))$ collapses into $\K_1 (=\operatorname{Bd}(\Delta^n_{n-1}\setminus \{\tau_{n-1}\}) )$. Now, by Lemma~\ref{l-4.2}, $\Delta^n_{n-1}\setminus \{\tau_{n-1}\}$ is collapsible, and by Theorem~\ref{T2.21},  $\operatorname{Bd}(\Delta^n_{n-1}\setminus \{\tau_{n-1}\})$ is also collapsible. Hence, $\operatorname{Bd}(\Delta^n_{n-1})\setminus \{\sigma\}$ is collapsible.
    \end{proof}
      
      We show that the barycentric subdivision of a combinatorial sphere is a combinatorial sphere.
    
    \begin{proposition} \label{P4.4}
    	For a combinatorial sphere $\s$, $\operatorname{Bd}(\s)$ is also a combinatorial sphere.
    	\end{proposition}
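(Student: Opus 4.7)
The plan is to verify both requirements of the definition of a combinatorial sphere for $\Bd(\s)$: (i) that $\Bd(\s)$ is a pseudomanifold of the same dimension $d$ as $\s$, and (ii) that some maximal simplex $\sigma' \in \Bd(\s)$ can be chosen so that $\Bd(\s)\setminus\{\sigma'\}$ is collapsible. The pseudomanifold part is standard: barycentric subdivision preserves purity, the fact that each codimension-one face lies in exactly two top-dimensional simplices (since each $(d-1)$-simplex of $\Bd(\s)$ corresponds to a chain $\tau_0\subsetneq\cdots\subsetneq\tau_{d-1}$, and the two ways to extend to a chain of length $d+1$ correspond to the two maximal simplices of $\s$ containing $\tau_{d-1}$), and strong connectivity (inherited from that of $\s$). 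I would handle this briefly and move to the collapsibility argument, which is the substantive step.

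For the collapsibility, let $\sigma\in\s$ be the maximal simplex guaranteed by the hypothesis so that $\s\setminus\{\sigma\}$ is collapsible. Pick a maximal chain $\tau_0\subsetneq\tau_1\subsetneq\cdots\subsetneq\tau_d=\sigma$ in $\s\setminus\{\emptyset\}$, and let
\[
\sigma' = [v_{\tau_0}, v_{\tau_1}, \ldots, v_{\tau_d}] \in \Bd(\s), \qquad \sigma'' = [v_{\tau_0}, \ldots, v_{\tau_{d-1}}].
\]
The key identity I would establish, exactly in the spirit of the proof of Lemma~\ref{l4.3}, is the decomposition
\[
\Bd(\s)\setminus\{\sigma'\} \;=\; \Bd\bigl(\s\setminus\{\sigma\}\bigr)\;\cup\;\Bigl(v_\sigma * \bigl(\Bd(\cl(\sigma)\setminus\{\sigma\})\setminus\{\sigma''\}\bigr)\Bigr).
\]
This identity follows by sorting the maximal chains of $\s\setminus\{\emptyset,\sigma'\setminus\sigma'\}$ (really, just maximal chains giving maximal simplices of $\Bd(\s)\setminus\{\sigma'\}$) according to whether they contain $\sigma$ as their top element or not; the chains that do contain $\sigma$ contribute cones from the apex $v_\sigma$ over a maximal chain in $\cl(\sigma)\setminus\{\sigma\}$, and omitting $\sigma'$ corresponds precisely to omitting $\sigma''$ from the base.

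Once this decomposition is in hand, I apply Lemma~\ref{l4.3} to the $d$-simplex $\cl(\sigma)$: since $\cl(\sigma)\setminus\{\sigma\}$ is (isomorphic to) $\Delta^{d+1}_{d}$, the complex $\Bd(\cl(\sigma)\setminus\{\sigma\})\setminus\{\sigma''\}$ is collapsible. Corollary~\ref{c-4.4}, with $\K_1=\Bd(\s\setminus\{\sigma\})$, $\K_2=\Bd(\cl(\sigma)\setminus\{\sigma\})\setminus\{\sigma''\}$, and apex $x=v_\sigma$ (which is not a vertex of $\K_1$ since $\sigma\notin\s\setminus\{\sigma\}$), then yields that $\Bd(\s)\setminus\{\sigma'\}$ collapses onto $\Bd(\s\setminus\{\sigma\})$. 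Finally, since $\s\setminus\{\sigma\}$ is collapsible by hypothesis, Theorem~\ref{T2.21} gives that $\Bd(\s\setminus\{\sigma\})$ is collapsible, and transitivity of collapsibility completes the proof.

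The main obstacle is the bookkeeping for the decomposition identity: one must verify carefully that $\K_2$ is indeed a subcomplex of $\K_1$ (which uses that every proper chain ending below $\sigma$ appears in $\Bd(\s\setminus\{\sigma\})$), that $v_\sigma \notin V(\K_1)$, and that no maximal simplex of $\Bd(\s)\setminus\{\sigma'\}$ is missed or double-counted. Once this combinatorial identity is nailed down, the conclusion is essentially a direct citation of Lemma~\ref{l4.3}, Corollary~\ref{c-4.4}, and Theorem~\ref{T2.21}.
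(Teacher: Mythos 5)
Your proof is correct and follows essentially the same route as the paper's: the identical decomposition $\Bd(\s)\setminus\{\sigma'\}=\Bd(\s\setminus\{\sigma\})\cup\bigl(v_\sigma *(\Bd(\cl(\sigma)\setminus\{\sigma\})\setminus\{\sigma''\})\bigr)$, followed by Lemma~\ref{l4.3}, Corollary~\ref{c-4.4}, and Theorem~\ref{T2.21} in exactly the same order (your $\sigma'$ and $\sigma''$ are the paper's $\tau$ and $\eta$). One small notational slip: $\cl(\sigma)\setminus\{\sigma\}$ for a $d$-simplex $\sigma$ is the $(d-1)$-skeleton of the full simplex on $d+1$ vertices, i.e.\ $\Delta^{d+1}_{d-1}$ in the paper's preliminary convention, not $\Delta^{d+1}_d$; this does not affect the substance of the argument.
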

    \begin{proof}
    	Let $\dim(\s)=d$. Since, $\s$ is a combinatorial sphere, there exists a $d$-simplex $\sigma \in \s$, such that, $\s \setminus \{\sigma\}$ is collapsible.  Let $\tau \in \s_d(\Bd(\s))$, such that $v_\sigma \in \tau$, where $v_\sigma$ is the vertex (of $\Bd(\s))$ corresponding to the $d$-simplex $\sigma$. Now we show that $\Bd(\s)  \setminus \tau$ is collapsible.
    	
    	Let $\eta= \tau \setminus \{v_{\sigma}\}$. Now, observe that
    	
    	\[\Bd(\s)\setminus \{\tau\}=\operatorname{Bd}(\s \setminus \{\sigma\}) \cup (v_{\sigma} *(\Bd(\cl(\sigma)\setminus \{\sigma \})\setminus \{\eta\})).\]
    	
    	Let $\K_1= \operatorname{Bd}(\s \setminus \{\sigma\}) $, $\K_2= \Bd(\cl(\sigma)\setminus \{\sigma \})\setminus \{\eta\}$, and $x= v_\sigma$. Clearly, $\K_2$ is a subcomplex of $\K_1$. Now, by Lemma~\ref{l4.3}, $\K_2$ is collapsible.  Also, note that $x \notin V(\K_1)$.
    	Hence, by Corollary~\ref{c-4.4}, $\operatorname{Bd}(\s)\setminus \{\tau\} (= \K_1 \cup (x*\K_2))$ collapses into $\K_1 (=\operatorname{Bd}(\s\setminus \{\sigma \}) )$. Now, by Theorem~\ref{T2.21},  $\operatorname{Bd}(\s\setminus \{\sigma\})$ is collapsible. Hence, $\operatorname{Bd}(\s)\setminus \{\tau\}$ is collapsible.
    	
    	Now, one may check that the barycentric subdivision of a pseudomanifold is also a pseudomanifold. Hence, $\Bd(\s)$ is a combinatorial sphere
    	
    \end{proof}
   
  Now we introduce the notion the notion of the \emph{degree} of a chain map.
  
  \begin{definition}[Degree of a chain map]
	Let $\s$ be a combinatorial $d$-sphere, and $\phi_\#:C_\#(\s)  \rightarrow C_\#(\s)$ be a chain map. Let $r$  be an orientation on $\s$, and $[\s_r]$ be the fundamental cycle corresponding to $r$. Then 
	\[\phi_d([\s_r])=m\cdot [\s_r],\]
	for some $m \in \mathbb{Z}$. This integer $m$
	is said to be the degree of the chain map $\phi_\#$.
  \end{definition}
    The well-definedness of this notion of degree is proved in Proposition~\ref{A5}.
    
    \begin{definition}[Combinatorial degree] \label{d4.6}
    	
    	Let $\s$ be combinatorial $d$-sphere and $k$ be a non-negative integer. Let $\Sd^k_\#: C_\# (\Bd^k(\s)) \rightarrow C_\#(\Bd^k(\s))$ be the $k$-th subdivision chain map.  Then,  for a simplicial map $f: \Bd^k(\s)\rightarrow \s$, we define the \emph{combinatorial degree} of the simplicial map $f$, $\deg(f)$, to be the degree of the chain map $\Sd^k_\# \circ f_\#:  C_\#(\Bd^k(\s))\rightarrow C_\#(\Bd^k(\s))$, where $f_\#: C_\#(\Bd^k(\s)) \rightarrow C_\#(\s)$ is the chain map induced by the simplicial map $f$.

    \end{definition}
 
   \begin{lemma} \label{l4.7}
   	Let $\s$ be a combinatorial $d$-sphere, and $\V$ be a gradient vector field on $\s$, having exactly two critical simplices, one of dimension $d$ and one of $0$. Let $\tau$ be the critical simplex of dimension $d$. Then $\partial(\overrightarrow{\tau}^{(\V)})=0$.
   \end{lemma}
   \begin{proof}
   	By Lemma~\ref{l3.3}, $\partial(\overrightarrow{\tau}^{(\V)})$ is generated by the $(d-1)$-dimensional critical chains.
   	
   	\textbf{Case 1:} When $d> 1$, there is no $(d-1)$-dimensional critical chain. Hence $\partial(\overrightarrow{\tau}^{(\V)})=0$.
   	
   	\textbf{Case 2:} ($d=1$), Let $\s$ be a combinatorial $1$-sphere, with a gradient vector field $\V$, having exactly two critical simplices, $\tau(=(v_iv_{i+1})$ and $(v_0)$ (see Figure~\ref{F3}). Then $\partial(\overrightarrow{\tau}^{(\V)})$ is generated by the $0$-dimensional critical chain $\overrightarrow{(v_0)}^{(\V)}$, which is the same as $(v_0)$.

   	\begin{figure}[h]
   	\centering
   	\begin{tikzpicture}[scale=2,
   		middle arrow/.style={
   			postaction={decorate},
   			decoration={
   				markings,
   				mark=at position 0.5 with {\arrow{>}}
   			}
   		}
   		]

   		\node[circle, fill=black, inner sep=1.5pt, label=right:$v_{i-1}$] (vi-1) at (0.9239,0.3827) {};
   		\node[circle, fill=black, inner sep=1.5pt, label=above right:$v_i$] (vi) at (0.3827,0.9239) {};
   		\node[circle, fill=black, inner sep=1.5pt, label=above:$v_{i+1}$] (vi+1) at (-0.3827,0.9239) {};
   		\node[circle, fill=black, inner sep=1.5pt, label=above left:$v_{i+2}$] (vi+2) at (-0.9239,0.3827) {};
   		\node[circle, fill=black, inner sep=1.5pt, label=below left:$v_{k-1}$] (vk-1) at (-0.9239,-0.3827) {};
   		\node[circle, fill=black, inner sep=1.5pt, label=below:$v_k$] (vk) at (-0.3827,-0.9239) {};
   		\node[circle, fill=black, inner sep=1.5pt, label=below right:$v_0$] (v0) at (0.3827,-0.9239) {};
   		\node[circle, fill=black, inner sep=1.5pt, label=right:$v_1$] (v1) at (0.9239,-0.3827) {};
   		\node at (0,0.8) {$\tau$};
   		
   		\draw[thick, middle arrow] (vk-1) -- (vk);
   		\draw[thick, middle arrow] (v1) -- (v0);
    	\draw[thick, middle arrow] (vk) -- (v0);  
    	\draw[thick, middle arrow] (vi+1) -- (vi+2);		
    	\draw[thick, middle arrow] (vi) -- (vi-1);
    	\draw[thick] (vi) -- (vi+1);
    	
    	 \foreach \t in {0.25,0.5,0.75} {
    		\fill ($(vi+2)!\t!(vk-1)$) circle (.4pt);
    		\fill ($(vi-1)!\t!(v1)$) circle (.4pt);
    	}
   	\end{tikzpicture}
   	\caption{The combinatorial $1$-sphere $\s$ and gradient vector field $\V$  with $\tau$ and $v_0$ are critical (for example, `$v_i \rightarrow v_{i-1}$' implies $((v_i)^{(0)},(v_iv_{i-1})^{(1)})\in \V$). }	\label{F3}
   	\end{figure}
   		
   		Now, the only two 1-simplices that contain $(v_0)$ are $(v_0v_k)$ and $(v_0v_1)$. There is a unique $\V$-trajectory $P_1$ starting from $\tau$ and ending at $(v_0v_1)$, which is
   		\[P_1: (v_iv_{i+1}) \rightarrow (v_i) \rightarrowtail (v_{i-1}v_i) \rightarrow \cdots \rightarrow (v_2) \rightarrowtail (v_1v_2) \rightarrow (v_1) \rightarrowtail (v_0v_1).\]
   		There is also a  unique $\V$-trajectory starting from $\tau$ and ending at $(v_0v_k)$, which is
   		\[P_2: (v_iv_{i+1}) \rightarrow (v_{i+1}) \rightarrowtail (v_{i+1}v_{i+2}) \rightarrow \cdots \rightarrow (v_{k-1}) \rightarrowtail (v_{k-1}v_k) \rightarrow (v_k) \rightarrowtail (v_0v_k).\]
   		Moreover, we have
   		\begin{align*}
   			w(P_1)&= \prod_{t=1}^i(-[(v_tv_{t+1}),(v_t)]\cdot[(v_{t-1}v_t),(v_t)])\\ 
   			&=\prod_{t=1}^i(-(+1)\cdot(-1))\\
   			&=1,
   		\end{align*}
   		and
   			\begin{align*}
   			w(P_2)&= \left(\prod_{t=i+1}^{k-1}(-[(v_{t-1}v_{t}),(v_t)]\cdot[(v_{t}v_{t+1}),(v_t)])\right)\times (-[(v_{k-1}v_{k}),(v_k)]\cdot[(v_{0}v_k),(v_k)])\\ 
   			&=\left(\prod_{t=1}^i(-(-1)\cdot(1))\right) \times (-(1)\cdot(1))\\
   			&=(1) \times (-1)\\
   			&=-1.
   		\end{align*}
   	    	Now the coefficient of $v_0$ in $\partial(\overrightarrow{\tau}^{(\V)})$ is,
   	    \begin{align*}  \label{E14}
   	    	\langle \overrightarrow{\tau}^{(\V)}, v_0\rangle &=\sum_{\substack{P: P \text{ is a}\\ \V\text{-trajectory},\\ \inn(P)=\sigma}} w(P)\cdot [ \trr(P), v_0]\\
   	    	&= w(P_1)\cdot[(v_0v_1), (v_0)] + w(P_2)\cdot[(v_0v_k), (v_0)]\\
   	    	&= 1\times 1 + (-1)\times 1\\
   	    	&=0
   	    \end{align*}
   		Hence, $\partial(\overrightarrow{\tau}^{(\V)})=0$.
   		
   	\end{proof}
   	
   	\begin{remark} \label{R4.8}
   		  Let $\s$ be a combinatorial $d$-sphere and $k$ be a non-negative integer. Then, by Proposition~\ref{P4.4}, $\Bd^k(\s)$ is also a combinatorial $d$-sphere. Let $\V$ be a gradient vector field on $\Bd^k(\s)$ having exactly two critical simplices, one of dimension $d$ and one of dimension $0$. Let $\tau$ be the $d$-dimensional critical simplex. Then for any simplicial map $f: \Bd^k(\s)\rightarrow (\s)$ of degree $m$, we can write $(\Sd^k_d \circ f_d)(\overrightarrow{\tau}^{(\V)})=m\cdot \overrightarrow{\tau}^{(\V)}$ (By Lemma~\ref{l4.7} and the well-definedness of degree (see  Appendix~\ref{Appendix})). Indeed, $\overrightarrow{\tau}^{(\V)}$ can be realised as a fundamental cycle corresponding to an orientation of $\Bd^k({\s})$ (see the proof of Theorem~\ref{A2}).
   	\end{remark}

   \begin{lemma} \label{L4.9}
   	Let $\s$ be a combinatorial $d$-sphere and $\V$ be a gradient vector field on $\s$ having exactly two critical simplices, one of dimension $0$ and one of dimension $d$. Let $v$ be the critical $0$-simplex. Then, for every vertex $u\in S_0(\s)$, there exists a unique co-$\V$-trajectory $Q$ from $v$ to $u$. Moreover, $w(Q)=1$.
   \end{lemma}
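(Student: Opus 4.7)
The plan is to exploit the very rigid structure forced on co-$\V$-trajectories that end at a given vertex $w$. Since $\V$ has $v$ as the unique critical $0$-simplex and $\tau$ as the unique critical $d$-simplex, every other $0$-simplex $u$ is paired in $\V$ with a unique $1$-simplex, which I shall denote $\V(u)$; let $\pi(u)$ denote the other endpoint of $\V(u)$. I propose to analyze co-$\V$-trajectories by reading them backwards vertex by vertex.

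First I would observe that, for any co-$\V$-trajectory $Q:v=\beta_0,\tau_1,\beta_1,\ldots,\tau_r,\beta_r=w$, the conditions $(\beta_i,\tau_i)\in\V$ and $\beta_{i-1}\subsetneq\tau_i$ with $\beta_{i-1}\neq\beta_i$ force $\tau_i=\V(\beta_i)$ and $\beta_{i-1}=\pi(\beta_i)$ for each $i\in\{1,\ldots,r\}$. Thus the sequence of vertices must be exactly $w,\pi(w),\pi^2(w),\ldots,\pi^r(w)=v$, and the $1$-simplices are determined by the vertices. This immediately gives uniqueness. The case $w=v$ is trivial: a nontrivial $Q$ ending at $v$ would require $(v,\tau_r)\in\V$, contradicting that $v$ is critical; so the only co-trajectory from $v$ to $v$ is the trivial one, with weight $1$.

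For existence when $w\neq v$, I would argue that iterating $\pi$ starting from $w$ must reach $v$ in finitely many steps. Since $V(\s)$ is finite and $\pi$ is defined everywhere except at $v$, if the forward orbit of $w$ under $\pi$ never hit $v$ then there would exist indices $i<j$ with $\pi^i(w)=\pi^j(w)$. Writing $u_k=\pi^k(w)$, the alternating sequence
\[
\V(u_i)\to u_{i+1}\rightarrowtail \V(u_{i+1})\to u_{i+2}\rightarrowtail \cdots \rightarrowtail \V(u_{j-1})\to u_j=u_i\rightarrowtail \V(u_i)
\]
is a nontrivial closed $\V$-trajectory among $1$-simplices, contradicting the acyclicity of $\V$. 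Hence $\pi^r(w)=v$ for some $r$, and reading this sequence backwards yields the required co-$\V$-trajectory $Q$ from $v$ to $w$.

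Finally, for the weight computation, note that in each factor $-[\tau_i,\beta_{i-1}][\tau_i,\beta_i]$ the two vertices $\beta_{i-1}$ and $\beta_i$ are precisely the two endpoints of the $1$-simplex $\tau_i$; since the boundary of an oriented edge $[a,b]$ is $b-a$, the two incidence numbers have opposite signs, so $[\tau_i,\beta_{i-1}][\tau_i,\beta_i]=-1$ and each factor equals $+1$. Therefore $w(Q)=1$. The only real subtlety is the acyclicity argument that powers existence; everything else is a direct unpacking of the definitions of co-$\V$-trajectory and weight.
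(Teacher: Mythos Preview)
Your proposal is correct and follows essentially the same approach as the paper: both arguments iterate from $w$ along the uniquely determined ``other endpoint'' of the paired edge until reaching $v$, invoke finiteness and acyclicity of $\V$ to guarantee termination, and compute the weight factor-by-factor using that the two incidence numbers at an edge have opposite signs. Your introduction of the map $\pi$ and your explicit display of the closed $\V$-trajectory in the non-termination case make the argument slightly crisper than the paper's version, but the underlying strategy is identical.
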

   \begin{proof} If $u=v$, the the trivial trajectory $Q:v$ is the unique co-$\V$-trajectory from $v$ to $u$. Moreover, we have $w(Q)=1$.
   	If $u\neq v$, let us consider the sequence
   	\[(u=)~v_0, \V(v_0), v_1, \text{ where, } v_1~(\neq v_0) \in \V(v_0).\]
   	 Now, if $v_1=v$, the following is a co-$\V$-trajectory from $v$ to $u$.
   	\[(v=)~v_1 \leftarrow \V(v_0) \leftarrowtail v_0~(=u).\]
   	If $v_1\neq v$, then $\V(v_1)$ exists, and we consider the sequence
   	\[(u=)~v_0, \V(v_0), v_1,\V(v_1), v_2, \text{ where, } v_2~(\neq v_1) \in \V(v_1).\]
    Now, if $v_2=v$, the following is a co-$\V$-trajectory from $v$ to $u$.
   	\[(v=)~v_2 \leftarrow \V(v_1) \leftarrowtail v_1^{(0)} \leftarrow \V(v_0) \leftarrowtail v_0~(=u).\]
   		If $v_2\neq v$, $\V(v_2)$ exists, and we proceed similarly.
   		Now, since $S_0(\s)$ is finite and $\V$ is acyclic, this process terminates, and for some $t~(\geq3)$, we get $v_t=v$. In that case the following is the desired co-$\V$-trajectory.
   			\[Q:(v=)~v_t \leftarrow \V(v_{t-1}) \leftarrowtail v_{t-1} \leftarrow \cdots \leftarrowtail v_2 \leftarrow \V(v_1) \leftarrowtail v_1 \leftarrow \V(v_0) \leftarrowtail v_0~(=u).\]
   We note that, in the co-$\V$-trajectory $Q$, for each $\V(v_{i-1})$, $i\in \{1,\ldots,t\}$, the choice of $v_i$ is unique. So, $Q$ is the unique co-$\V$-trajectory from $v$ to $u$. Moreover, we have
   \begin{align*}
   	w(Q)&= \prod_{i=0}^{t-1}( -[\V(i), v_{i+1}]\cdot [\V(i), v_{i}])\\
   	&=  \prod_{i=0}^{t-1}( -(-1))\\
   	&=1.
   \end{align*}
\end{proof}
   The following is a immediate corollary of Lemma~\ref{L4.9}.

   \begin{corollary}\label{C4.8}
   	Let $\s$ be a combinatorial $d$-sphere and $\V$ be a gradient vector field on $\s$ having two critical simplices, one of dimension $0$ and one of dimension $d$. Let $v$ be the critical $0$-simplex. Then, for each $u\in S_0(\s)$, $\langle\overleftarrow{v}^{(\V)},u	\rangle =1$.
   	\end{corollary}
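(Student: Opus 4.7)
The plan is simply to unwind the definition of the co-critical chain $\overleftarrow{v}^{(\V)}$ and apply Lemma~\ref{L4.9} directly. Recall from Definition~\ref{D1.6} that
\[
\overleftarrow{v}^{(\V)} = \sum_{\substack{Q:Q \text{ is a co-}\V\text{-trajectory},\\ \inn(Q) = v}} w(Q)\cdot \trr(Q).
\]
Because $v$ is a $0$-dimensional simplex, every co-$\V$-trajectory starting at $v$ consists of alternating $0$- and $1$-dimensional simplices, so every terminal face $\trr(Q)$ is itself a vertex of $\s$. Therefore the sum can be regrouped as an indexed sum over the vertices $w \in S_0(\s)$ that arise as terminal faces.

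Next I would invoke Lemma~\ref{L4.9}, which states that for every $w \in S_0(\s)$ there is a unique co-$\V$-trajectory $Q_w$ with $\inn(Q_w) = v$ and $\trr(Q_w) = w$, and moreover $w(Q_w) = 1$. Substituting this into the sum gives
\[
\overleftarrow{v}^{(\V)} = \sum_{w \in S_0(\s)} w(Q_w)\cdot w = \sum_{w \in S_0(\s)} w.
\]
Reading off the coefficient of any fixed vertex $w$ with respect to the standard basis $S_0(\s)$ then yields $\langle \overleftarrow{v}^{(\V)}, w \rangle = 1$, which is the claim.

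There is essentially no obstacle here: Lemma~\ref{L4.9} supplies both the existence/uniqueness of the relevant co-trajectory and the computation of its weight, so the corollary is just a bookkeeping restatement of that lemma in terms of the inner product defined earlier. The only minor point worth noting is that one should verify that the sum defining $\overleftarrow{v}^{(\V)}$ is indexed by exactly the family $\{Q_w\}_{w \in S_0(\s)}$ provided by Lemma~\ref{L4.9}, which follows immediately from the uniqueness clause of that lemma.
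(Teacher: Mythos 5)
Your argument is exactly the paper's: unwind Definition~\ref{D1.6} for $\overleftarrow{v}^{(\V)}$, regroup the sum over terminal vertices $w \in S_0(\s)$, apply Lemma~\ref{L4.9} to replace each inner sum by the single weight $1$, and read off the coefficient. This matches the published proof step for step.
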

   	\begin{proof}
   		We have,
   		\begin{align*}
   		\overleftarrow{v}^{(\V)} & = \sum_{\substack{Q: Q \text{ is a}\\ \text{co-}\V\text{-trajectory},\\ \inn(Q)=v}} w(Q)\cdot \trr(Q) ~~~ \text{ (Definition~\ref{D1.6})}\\
   		 &=\sum_{u\in S_0(\s)} \left(\sum_{\substack{Q: Q \text{ is a}\\ \text{co-}\V\text{-trajectory},\\ \text{from } v \text{ to }u.}} w(Q)\right) u\\
   		&=\sum_{u\in S_0(\s)} u~~~\text{ (by Lemma~\ref{L4.9})}.
   	    \end{align*}
   	Which implies, $\langle\overleftarrow{v}^{(\V)},u	\rangle =1$, for each $u\in S_0(\s)$.
   		
   	\end{proof}

      The following lemma is a well-known topological result, whose standard proof uses an  Euler characteristic based argument. However, in our combinatorial setup, we provide a proof using the combinatorial Hopf trace formula (which is bit of an overkill though). 
    \begin{lemma} \label{l4.9}
    	Let $\s$ be a $d$-dimensional $\mathbb{Z}_p$-combinatorial sphere. If $p > 2$,  $d$ is odd.
    \end{lemma}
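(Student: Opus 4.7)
The plan is to apply the combinatorial Hopf trace formula (Theorem~\ref{t1.10}) to the chain map $g_\#:C_\#(\s,\mathbb{Z})\to C_\#(\s,\mathbb{Z})$ induced by a generator $g$ of the $\mathbb{Z}_p$-action, with $\V$ a gradient vector field on $\s$ having exactly two critical simplices, namely a critical $0$-simplex $v$ and a critical $d$-simplex $\tau$ (which exists by the definition of a combinatorial sphere). First I would compute the left-hand side of Equation~\eqref{eq-main}: since the action is free, $g\cdot\sigma\neq\sigma$ for every simplex $\sigma$, so the coefficient $\langle g_q(\sigma),\sigma\rangle_{S_q}$ is zero for every $\sigma\in S_q(\s)$. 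Hence $\operatorname{tr}(g_q)=0$ for all $q$ and the LHS vanishes.

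Next I would compute the right-hand side, which has at most two nonzero summands ($q=0$ and $q=d$). Since $v$ is the only $0$-simplex paired with $\emptyset$ in $\V$, the only $\V$-trajectory starting at $v$ is the trivial one, so $\overrightarrow{v}^{(\V)}=v$, and Corollary~\ref{C4.8} gives $\overleftarrow{v}^{(\V)}=\sum_{w\in S_0(\s)}w$; therefore
\[\langle \overleftarrow{v}^{(\V)},\,g_0(\overrightarrow{v}^{(\V)})\rangle=\Big\langle \sum_{w\in S_0(\s)} w,\; g\cdot v\Big\rangle = 1.\]
At the top dimension, $\tau$ admits no non-trivial co-$\V$-trajectory (there are no $(d+1)$-simplices), so $\overleftarrow{\tau}^{(\V)}=\tau$. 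By Lemma~\ref{l4.6}, $\overrightarrow{\tau}^{(\V)}\in Z_d(\s,\mathbb{Z})$; since $\s$ is an orientable pseudomanifold (so $Z_d(\s,\mathbb{Z})\cong\mathbb{Z}$, generated by the fundamental cycle as noted in Remark~\ref{R4.8}) and the coefficient of $\tau$ in $\overrightarrow{\tau}^{(\V)}$ is $1$ by Observation~\ref{Ob2.13}(iii), $\overrightarrow{\tau}^{(\V)}$ is a fundamental cycle. Consequently $g_d(\overrightarrow{\tau}^{(\V)})=\deg(g_\#)\cdot\overrightarrow{\tau}^{(\V)}$ with $\deg(g_\#)\in\{\pm 1\}$ (the latter because $g$ is a simplicial automorphism, so $g_\#^{-1}g_\#=\operatorname{Id}_\#$ forces $\deg(g_\#)^2=1$), and the $q=d$ contribution is $(-1)^d\deg(g_\#)\langle \tau,\overrightarrow{\tau}^{(\V)}\rangle=(-1)^d\deg(g_\#)$.

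Substituting into Theorem~\ref{t1.10} yields $0=1+(-1)^d\deg(g_\#)$, so $\deg(g_\#)=(-1)^{d+1}$. Finally, since $g$ has order $p$, the chain map $g_\#^p$ is the identity, hence $(\deg(g_\#))^p=1$; as $p$ is odd and $\deg(g_\#)=\pm 1$, this forces $\deg(g_\#)=1$. Therefore $(-1)^{d+1}=1$, so $d$ is odd. The only delicate point is identifying $\overrightarrow{\tau}^{(\V)}$ with a fundamental cycle so that the degree equation applies, but this is precisely the content of Lemma~\ref{l4.6} together with the one-dimensionality of $Z_d$ for an orientable $d$-pseudomanifold.
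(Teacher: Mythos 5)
Your proof is correct, but it takes a genuinely different route from the paper's. The paper applies the combinatorial Hopf trace formula (Theorem~\ref{t1.10}) to the \emph{identity} chain map $\Id_\#$: the left-hand side becomes the alternating sum $\sum_q(-1)^q|S_q(\s)|$ (the Euler characteristic), and the freeness of the $\mathbb{Z}_p$-action makes each $|S_q(\s)|$ divisible by $p$, so the left-hand side vanishes modulo $p$; the right-hand side evaluates to $(-1)^d+1$, and for $p>2$ this forces $d$ odd. You instead apply the formula to $g_\#$ for a generator $g$ of the $\mathbb{Z}_p$-action. Freeness then makes the left-hand side vanish \emph{exactly} (each $\operatorname{tr}(g_q)=0$), and the right-hand side becomes $1+(-1)^d\deg(g_\#)$; you then pin down $\deg(g_\#)=1$ from the multiplicativity of degree and the relation $g_\#^p=\Id_\#$ (with $p$ odd). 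Both arguments are sound: the paper's is slightly shorter because it avoids any computation of $\deg(g_\#)$, while yours is a cleaner Lefschetz-type argument that makes it transparent \emph{why} $p=2$ must be excluded (for $p=2$ one only gets $\deg(g_\#)=\pm1$ and no constraint on the parity of $d$). One small inaccuracy: the step ``$g_\#^{-1}g_\#=\Id_\#$ forces $\deg(g_\#)^2=1$'' should read $\deg(g_\#)\cdot\deg\bigl((g^{-1})_\#\bigr)=1$, which gives $\deg(g_\#)=\pm1$ since degrees are integers; in fact this intermediate step is unnecessary, as $\deg(g_\#)^p=1$ with $\deg(g_\#)\in\mathbb{Z}$ already forces $|\deg(g_\#)|=1$, and then $\deg(g_\#)=1$ for odd $p$.
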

    
    \begin{proof}
    	Let $\V$ be a gradient vector field on $\s$, such that, there exists only two critical simplices, one of dimension $d$, say $\sigma_0$, and one of dimension $0$, say $v$. Let $\Id_\#: C_\#(\s, \mathbb{Z})\rightarrow C_\#(\s,\mathbb{Z})$ be the identity chain map. Then from Theorem~\ref{t1.10}, we have 
    	\begin{align} \label{E18}
    	\nonumber	\sum_{q=0}^d (-1)^q \operatorname{tr}(\Id_q)&=\sum_{q=0}^d (-1)^q \sum_{\sigma\in \crit_q^{(\V)}(\K)} \langle \overleftarrow{\sigma}^{(\V)},\Id_q(\overrightarrow{\sigma}^{(\V)})
    	\rangle_{S_q}\\
    	\nonumber&=\sum_{q=0}^d (-1)^q \sum_{\sigma\in \crit_q^{(\V)}(\K)} \langle \overleftarrow{\sigma}^{(\V)},\overrightarrow{\sigma}^{(\V)}
        \rangle_{S_q}\\
    	\nonumber&= (-1)^d \cdot \langle \overleftarrow{\sigma_0}^{(\V)},\overrightarrow{\sigma_0}^{(\V)}
    	\rangle_{S_q} + (-1)^0 \langle \overleftarrow{v}^{(\V)},\overrightarrow{v}^{(\V)}
    	\rangle_{S_q}\\
    	\nonumber&= (-1)^d \cdot \langle \sigma_0,\overrightarrow{\sigma_0}^{(\V)}
    	\rangle_{S_q} + (-1)^0 \langle \overleftarrow{v}^{(\V)},v
    	\rangle_{S_q}\\
    	&= (-1)^d + 1
       \end{align} 
       Now since, $\s$ is a $\mathbb{Z}_p$-combinatorial sphere, $p$ divides the left-hand side of the Equation~\eqref{E18}, and hence the right-hand side. Which implies $d$ is odd.
 
       \end{proof}
	  
    Now we are in a position to prove Theorem~\ref{t1.7}.
    \begin{proof} [Proof of Theorem~\ref{t1.7}]
    	Since, $\s$ is a combinatorial $d$-sphere, by Proposition~\ref{P4.4}, $\Bd^k(\s)$ is also a combinatorial $d$-sphere. Let $\V$ be a gradient vector field on $\Bd^k(\s)$ having exactly two critical simplices, one $d$-dimensional, say $\tau$, and one $0$-dimensional, say $v$. Let $f_\#:C_\#(\Bd^{k}(\s), \mathbb{Z}) \rightarrow C_\#(\s, \mathbb{Z})$ be the chain map induced by the simplicial map $f: \operatorname{Bd}^{k}(\s) \rightarrow \s $, and $\Sd^k_\#: C_\#(\s, \mathbb{Z}) \rightarrow C_\#(\Bd^k(\s),\mathbb{Z})$ be the $k$-th subdivision map. Let $\phi_\#=\Sd^k_\# \circ f_\#$. Then,  for the chain map $\phi_\#: C_\#(\Bd^k(\s), \mathbb{Z}) \rightarrow C_\#(\Bd^k(\s),\mathbb{Z})$, by Theorem~\ref{t1.10}, we have
	\begin{equation} \label{Eq14}
		\sum_{q=0}^d (-1)^q \operatorname{tr}(\phi_q)=(-1)^d \langle\overleftarrow{\tau}^{(\V)},\phi_d(\overrightarrow{\tau}^{(\V)})
		\rangle + (-1)^0 \langle\overleftarrow{v}^{(\V)},\phi_0(\overrightarrow{v}^{(\V)})
		\rangle.
	\end{equation} 

Since, $\Bd^k(\s)$ is a $\mathbb{Z}_p$-combinatorial sphere, $p$ divides the left-hand side of the Equation~\eqref{Eq14}, and hence the right-hand side too. So, we have
\begin{align*}
	& (-1)^d \langle\overleftarrow{\tau}^{(\V)},\phi_d(\overrightarrow{\tau}^{(\V)})
	\rangle + (-1)^0 \langle\overleftarrow{v}^{(\V)},\phi_0(\overrightarrow{v}^{(\V)})
	\rangle \equiv 0 \pmod{p}\\
	\implies & (-1)^d \langle \overleftarrow{\tau}^{(\V)} ,\deg(f)\cdot \overrightarrow{\tau}^{(\V)}
	\rangle + \langle\overleftarrow{v}^{(\V)},\phi_0(\overrightarrow{v}^{(\V)})
	\rangle \equiv 0 \pmod{p} ~~\text{ (see Remark~\ref{R4.8})}\\
	\implies & (-1)^d \langle\tau,\deg(f)\cdot \overrightarrow{\tau}^{(\V)}
	\rangle + \langle\overleftarrow{v}^{(\V)},\phi_0(v)
	\rangle \equiv 0 \pmod{p} \\
	\implies & (-1)^d \cdot \deg(f)\cdot \langle\tau, \overrightarrow{\tau}^{(\V)}
	\rangle + \langle\overleftarrow{v}^{(\V)},\phi_0(v)
	\rangle \equiv 0 \pmod{p} \\
	\implies & (-1)^d \cdot \deg(f) + 1 \equiv 0 \pmod{p} ~\text{ (by Corollary~\ref{C4.8})}
\end{align*}
    Now, when $p=2$, we have 
    \begin{align*}
    	& (-1)^d \cdot \deg(f) + 1 \equiv 0 \pmod{2} \\
       \implies & \deg(f) \equiv 1 \pmod{2}. 
    \end{align*} 
When $p>2$, by Lemma~\ref{l4.9}, $d$ is odd. Hence, we have 
\begin{align*}
 & (-1)^d \cdot \deg(f) + 1 \equiv 0 \pmod{p} \\
\implies & -\deg(f) + 1 \equiv 0 \pmod{p} \\
\implies & \deg(f) \equiv 1 \pmod{p} \\
\end{align*}
Hence, the theorem is proved.
    \end{proof}
    
    Now we need the following proposition to prove the Theorem~\ref{t1.9}. 

    \begin{proposition} \label{p4.12}
    	Join of two combinatorial spheres is also a combinatorial sphere.
    \end{proposition}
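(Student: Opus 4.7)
The plan is to verify the definition of combinatorial sphere directly: I will exhibit a maximal simplex of $\s_1 * \s_2$ whose removal leaves a collapsible complex. First, I verify that $\s_1 * \s_2$ is a $(d_1+d_2+1)$-dimensional pseudomanifold (where $d_i = \dim \s_i$). Purity is immediate from the identification of maximal simplices as $\sigma \cup \tau$ with $\sigma$ maximal in $\s_1$ and $\tau$ maximal in $\s_2$. Every $(d_1+d_2)$-dimensional simplex of $\s_1 * \s_2$ has the form $\sigma \cup \tau'$ (with $\sigma$ maximal in $\s_1$ and $\tau'$ a $(d_2-1)$-simplex of $\s_2$) or, symmetrically, $\sigma' \cup \tau$, and in either case the pseudomanifold property of the respective factor gives exactly two maximal extensions in $\s_1 * \s_2$. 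Strong connectedness through $(d_1+d_2)$-simplices lifts from that of the factors.

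Next fix maximal simplices $\sigma_i \in \s_i$ with $\s_i \setminus \{\sigma_i\}$ collapsible, and gradient vector fields $\V_i$ on $\s_i \setminus \{\sigma_i\}$ each having a single critical vertex $v_i$ (paired with $\emptyset$). I would define a discrete vector field $\V$ on $(\s_1 * \s_2) \setminus \{\sigma_1 \cup \sigma_2\}$ by
\[
\V := \bigl\{(\alpha \cup \beta,\, \alpha' \cup \beta) : (\alpha, \alpha') \in \V_1,\ \beta \in \s_2\bigr\} \;\cup\; \bigl\{(\sigma_1 \cup \beta,\, \sigma_1 \cup \beta') : (\beta, \beta') \in \V_2\bigr\},
\]
calling the two families \emph{Type A} and \emph{Type B} respectively. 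Type A pairs cover exactly those simplices $\alpha \cup \beta$ with $\alpha \in \s_1 \setminus \{\sigma_1\}$ (and any $\beta \in \s_2$), while Type B pairs cover the remaining simplices $\sigma_1 \cup \beta$ with $\beta \in \s_2 \setminus \{\sigma_2\}$. The only pair in $\V$ of the form $(\emptyset, \cdot)$ is $(\emptyset, v_1)$, coming from $(\emptyset, v_1) \in \V_1$ with $\beta = \emptyset$, so $v_1$ is the unique critical simplex of $\V$; and since $\alpha' \neq \sigma_1$ in every $\V_1$-pair and $\beta' \neq \sigma_2$ in every $\V_2$-pair, the simplex $\sigma_1 \cup \sigma_2$ never appears in $\V$, so $\V$ does live on $(\s_1 * \s_2) \setminus \{\sigma_1 \cup \sigma_2\}$.

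The hard part is proving $\V$ acyclic. Given a putative nontrivial closed $\V$-trajectory, the first structural observation is that a Type A step cannot be immediately followed by a Type B step: after a Type A step, the $\s_1$-component of $\gamma_i$ is some $\alpha' \in \s_1 \setminus \{\sigma_1\}$, while a subsequent Type B step would force the next $\eta_{i+1}$ to have $\s_1$-component equal to $\sigma_1$, hence $\alpha' \supseteq \sigma_1$, contradicting the maximality of $\sigma_1$ in $\s_1$. This forces the closed trajectory to consist entirely of one type. If all pairs are Type B, projecting away the constant $\sigma_1$ prefix yields a nontrivial closed $\V_2$-trajectory, contradicting acyclicity of $\V_2$. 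For the all-Type-A case I would run a dimension-counting argument on the $\s_1$-component: for each descent step $\gamma_{i-1} \to \eta_i$, classify the removed vertex as living in $V(\s_1)$ (which preserves the $\s_1$-dimension across step $i$) or in $V(\s_2)$ (which increases it by one). Summing over the closed trajectory forces the second case never to occur; hence the $\s_2$-component is constant along the trajectory and the $\s_1$-components assemble into a nontrivial closed $\V_1$-trajectory, again a contradiction.

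Once acyclicity is established, $\V$ witnesses that $(\s_1 * \s_2) \setminus \{\sigma_1 \cup \sigma_2\}$ is collapsible; combined with the pseudomanifold property, this proves $\s_1 * \s_2$ is a combinatorial sphere. The crux of the argument is the acyclicity proof, where both the exclusion of Type~A followed by Type~B transitions and the dimension bookkeeping reducing the all-Type-A case to a single-factor $\V_1$-trajectory require careful, though elementary, case analysis.
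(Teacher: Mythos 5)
Your proof is correct, but it takes a genuinely different route from the paper's. The paper constructs a gradient vector field $\W$ on all of $\C * \D$ in a five-step case analysis (first pairing $\sigma \cup \emptyset$ with $\sigma \cup d_0$, then handling $\emptyset \cup \tau$, then $\sigma \cup \tau$ with $\sigma$ non-critical, then $\sigma$ critical, then one leftover pair), producing two critical simplices, and verifies acyclicity by an ad hoc enumeration of where a closed trajectory could begin. You instead remove the top simplex $\sigma_1 \cup \sigma_2$ up front and build a matching on the remainder with just two families: Type A ($\V_1$ crossed with every $\beta \in \s_2$) and Type B ($\sigma_1$ crossed with $\V_2$). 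This is essentially the ``matching on a product/bundle'' pattern, and it buys you a much tighter acyclicity proof: the structural lemma that no A-step can precede a B-step in a cycle (via maximality of $\sigma_1$) reduces everything to single-type cycles, the all-B case projects directly to a $\V_2$-cycle, and the all-A case falls to a clean monotonicity argument on the $\s_1$-dimension of the $\alpha_i$'s. One small point worth making explicit if you write this up: when you project an all-B cycle to a $\V_2$-trajectory and an all-A cycle to a $\V_1$-trajectory, you should note that the nondegeneracy conditions $\beta_{i-1} \neq \beta_i$ and $\alpha_i \subsetneq \beta_{i-1}$ survive the projection (they do, since the fixed factor cancels in both the containment and the inequality), so the projected trajectory is genuinely a nontrivial closed trajectory in the factor. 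The paper's construction avoids having to prove a ``no A-then-B'' lemma at the cost of more pairing families and a longer case split; yours is arguably more transparent and reusable.
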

    \begin{proof}
    	Let $\mathcal{C}$ and $\mathcal{D}$ be two combinatorial spheres of dimension $m$ and $n$, respectively. Let $\V_{\mathcal{C}}$ be a   gradient vector field on $\mathcal{C}$, such that, the only critical simplices of $\mathcal{C}$ corresponding to $\V_{\mathcal{C}}$ are $\gamma_1$ and $\gamma_2$, with $\dim(\gamma_1)=0$ and $\dim(\gamma_2)=m$. Similarly,  $\V_{\mathcal{D}}$ be a gradient vector field on $\D$, such that, the only critical simplices of $\mathcal{D}$ corresponding to $\V_{\mathcal{D}}$ are $\delta_1$ and $\delta_2$, with $\dim(\delta_1)=0$ and $\dim(\delta_2)=n$. We have
    	\[\C * \D= \{\sigma \sqcup \tau : \sigma \in \C \text{ and } \tau \in \D\}.\]
    	One can easily verify that join of two pseudomanifold is also a pseudomanifold. So, $\C * \D$ is a $(m+n+1)$-dimensional pseudomanifold. Now, to show $\C * \D$ is a combinatorial sphere, we need to prove existence of a gradient vector field $\W$ on $\C * \D$, such that $\W$ has only two critical simplices, one $(m+n+1)$-dimensional and one $0$-dimensional. We construct a discrete vector field $\W$ on $\C*\D$ in the following steps.
    	
    	\textbf{Step 1:} First, we pair the simplices of the form $\sigma \sqcup \emptyset$ with $\sigma \sqcup \delta_1$. That means, in this step we have the following set of pairs.
    	\[\W_1=\{(\sigma \sqcup \emptyset, \sigma \sqcup \delta_1): \sigma \in \C \}.\]
    	Note that, in this process the empty simplex $\emptyset $ (i.e., $\emptyset \sqcup \emptyset$) in $\C* \D$ is paired with the simplex $\delta_1$ (i.e., $\emptyset \sqcup \delta_1$). 
    	
    	\textbf{Step 2:} Next, we construct the following set of pairings. 
    	\[\W_2=\{(\emptyset \sqcup \tau, \emptyset \sqcup \V_{\D}(\tau)): \tau(\neq \emptyset) \in U^{(\V_D)}(\D)\}.\]
    	 Note that, for $\tau=\emptyset$, the simplex $\emptyset \sqcup \emptyset$ is already paired with $\emptyset \sqcup \delta_1$ in previous step.
    	 The only simplex in $\C * \D$ of the form $\emptyset \sqcup \tau$, which is still unpaired, is $\emptyset \sqcup \delta_2$.
    	 
    	\textbf{Step 3:} Next, we construct the following set of pairings.
     	\[\W_3= \{ (\sigma \sqcup \tau, \V_{\C}(\sigma) \sqcup \tau): \sigma(\neq \emptyset) \in U^{(\V_{\C})} (\C), \text{ and }  \tau (\neq \emptyset,\delta_1) \in \D\}.\]
     	Observe that, if $\tau=\delta_1$ then the simplex $\sigma \sqcup \delta_1$ is paired with the simplex $\sigma \sqcup \emptyset$ in Step~1.
     	
     	\textbf{Step 4:} Next, we construct the following set of pairings.
     	\[\W_4=\{(\sigma \sqcup \tau, \sigma \sqcup \V_{D}(\tau)): \sigma \in \crit^{(\V_{\C})}(\C), \text{ and } \tau  (\neq \emptyset) \in U^{(\V_{\D})}(\D)\}.\]
        We note that, After Step~4, the unpaired simplices are $\emptyset \sqcup \delta_2, \gamma_1 \sqcup \delta_2, \gamma_2 \sqcup \delta_2$. 	
     	
     	\textbf{Step 5:} Finally, we make the following pair.
     	\[\W_5= \{(\emptyset \sqcup \delta_2, \gamma_1 \sqcup \delta_2)\}.\]
     	
     	 We set, $\W=\W_1\sqcup \W_2 \sqcup \ldots \sqcup \W_5$. Note that, the only unpaired simplex in $\W$ is $\gamma_2 \sqcup \delta_2$. We can easily check that $\W$ is well-defined as a discrete vector field. Now, we show that $\W$ is a gradient vector field. 
     	  
     	\textbf{Acyclicity of $\W$:} We will prove the acyclicity of $\W$ by contradiction. Let us assume the following is a closed $\W$-trajectory in $\C * \D$.
     	\[P: \beta_0^{(q)} \rightarrow \alpha_1^{(q-1)} \rightarrowtail \beta_1^{(q)} \rightarrow \alpha_2^{(q-1)} \rightarrowtail \beta_2^{(q)} \rightarrow \cdots \rightarrow \alpha_r^{(q-1)} \rightarrowtail \beta_r^{(q)}(=\beta_0)\]
     	
     	\textbf{Case 1:} Let $\alpha_1= \sigma_1 \sqcup \emptyset$, for some $\sigma_1 \in \C$. Then $\beta_1=\sigma_1 \sqcup \delta_1$ (by Step~1). So, $\alpha_2= \sigma \sqcup \delta_1$, for some facet $\sigma$ of $\sigma_1$, and hence, $\alpha_2$ is paired with a lower dimensional simplex $\sigma \sqcup \emptyset$ (by Step~1). This is a contradiction.
     	
     	\textbf{Case 2:} Let $\alpha_1= \emptyset \sqcup \tau_1$, for some $\tau_1 \in \D$. Note that, $\tau_1$ can not be $\delta_1$ or paired with a lower dimensional simplex in $\V_{\D}$, otherwise $\alpha_1$ is paired with a lower dimensional simplex in $\W$ (by Step~1 and Step~2). Now consider the following subcases.
     	\begin{enumerate}[(i)]
     		
     	\item If $\tau_1=\delta_2$, i.e., $\alpha=\emptyset \sqcup \delta_2$, then $\beta_1=\gamma_1 \sqcup \delta_2$ (by Step~5), and $\alpha_2=\gamma_1 \sqcup \tau_2$, for some facet $\tau_2$ of $\delta_2$. Now, if $\tau_2 = \delta_1$, or $\tau_2$ is paired 
     	with a lower dimensional simplex in $\V_{\D}$, $\alpha_2$ is paired with a lower dimensional simplex (by Step~1 and Step~4), a contradiction. Hence, $\tau_2 \in U^{(\V_\D)}(\D)$, and $\beta_2= \gamma_1 \sqcup \V_{\D}(\tau_2)$ (by Step~4). Now since, $\alpha_1=\emptyset \sqcup \delta_2$, and the trajectory $P$ is closed, there exists a smallest $k \geq 2$, such that, $\beta_k=\gamma_1 \sqcup \V_{\D}(\tau_k)$, for some $\tau_k \in \D$, and $\alpha_{k+1}=\emptyset \sqcup \V(\tau_k)$. In this case, $\alpha_{k+1}$ is paired with a lower dimensional simplex $\emptyset \sqcup \tau_k$, which is a contradiction.

     	\item  If $\tau_1\neq \delta_2$, $\tau_1 \in U^{(\V_{\D})}(\D)$ and $\beta_1=\emptyset \sqcup \V_{\D}(\tau_1)$ (by Step~2). This implies, $\alpha_2= \emptyset \sqcup \tau_2$, for some facet $\tau_2$ of $\V_{\D}(\tau)$. Now, $\tau_2$ can not be paired	with a lower dimensional simplex in $\V_{\D}$, otherwise $\alpha_2$ is paired with a lower dimensional simplex (by Step~2), a contradiction.  Hence, $\tau_2 \in U^{(\V_\D)}(\D)$, and $\beta_2= \gamma_1 \sqcup \V_d(\tau_2)$ (by Step~4). Note that, the same pattern continues, and hence each simplex of the trajectory $P$ is of the form $\emptyset \sqcup \tau$, where $\tau \in \D$. This implies that, the simplices in the trajectory $p$ are in a natural one-to-one correspondence ($\emptyset \sqcup \tau \leftrightarrow \tau$) with the simplices in a $\V_{(\D)}$-trajectory, which can not be a closed trajectory (from the acyclicity of $\V_{\D}$), a contradiction.
     	
        \end{enumerate}

     	\textbf{Case 3:} Let $\alpha_1=\sigma_1 \sqcup \tau_1$, where $\sigma_1, \tau_1 \neq \emptyset$. Note that, $\tau_1  \neq \delta_1$, otherwise $\alpha$ is paired with a lower dimensional simplex $\alpha_1 \sqcup \emptyset$ (by Step~1), a contradiction. Now let us consider the following subcases.
     	
     	\begin{enumerate}[(i)]
     		\item \label{Sc1}	Let $\sigma_1 \neq \gamma_1$ or $\gamma_2$.  Then $\sigma\in U^{(\V_c)}(C)$, otherwise, if $\sigma_1\in D^{(\V_c)}(C)$), $\alpha_1$ is paired with a lower dimensional simplex (by Step~3), a contradiction. Therefore, we get $\beta_1=\V(\sigma_1) \sqcup \tau_1$ (by Step~3). Now, we claim that each simplex of the trajectory $P$ is of the form $\sigma \sqcup \tau_1$, for some $\sigma \in \C$. For otherwise, let $k~(\geq 1)$ be the smallest integer, such that, for each $1 \leq i \leq k$, $\alpha_i= \sigma_i \sqcup \tau_1$ and $\beta_i= \sigma_i^\prime \sqcup \tau_1$ ($\sigma_i$ is a facet of $\sigma_i^\prime$), and $\alpha_{k+1}= \sigma_k^\prime \sqcup \tau$, where $\tau$ is a facet of $\tau_1$. Now, observe that, for each $1 \leq i \leq k$, $\sigma_i^\prime=\V_{\C}(\sigma_i)$ (by Step~3). So, $\alpha_{k+1}= \V_{\C}(\alpha_k) \sqcup \tau$. Now, if $\tau=\delta_1$, then $\alpha_{k+1}$ is paired with a lower dimensional simplex $\V_{\C}(\alpha_k) \sqcup \emptyset$ (by Step~1), a contradiction. (If $\tau= \emptyset$, then $\beta_{k+1}= \V_{\C}(\alpha_k) \sqcup \delta_1$ (by Step~1), and $\alpha_{k+2}= \sigma^\prime \sqcup \delta_1$, where $\sigma^\prime$ is a facet of $\V_{\C}(\alpha_k)$. So, $\alpha_{k+2}$ is paired with a lower dimensional simplex $\sigma^\prime \sqcup \emptyset$ (by Step~1), a contradiction.) Now, $\alpha_{k+1}= \V_{\C}(\alpha_k) \sqcup \tau$, with $\tau \neq \emptyset$ or $\delta_1$, implies that  $\alpha_{k+1}$ is paired with a lower dimensional simplex $\sigma_k \sqcup \tau$ (by Step~3), a contradiction.  Hence, each simplex of the trajectory $P$ is of the form $\sigma \sqcup \tau_1$, for some $\sigma \in \C$. Therefore, each simplex of the trajectory $P$ is in a one-to-one correspondence ($\sigma \sqcup \tau_1 \leftrightarrow \sigma$) with the simplices of a $\V_{\C}$-trajectory, which can not be a closed trajectory (from the acyclicity of $\V_{\C}$), a contradiction.
     		
     		\item  \label{Sc2} Let $\sigma_1= \gamma_1$. Note that, $\tau_1 \neq \delta_2$, otherwise $\alpha_1$ is paired with a lower dimensional simplex $\emptyset \sqcup \delta_2$ (by Step~5), a contradiction. Also if $\tau_1 \in D^{(\V_{\D})}{\D}$, $\tau_1=\V_{\D}(\tau_0)$, for some $\tau_0 \in \D$, and  $\alpha_1$ is paired with a lower dimensional simplex $\gamma_1 \sqcup \tau_0$ (by Step~4), a contradiction. Hence, $\tau_1 \in U^{\V_{\D}}(\D)$, and therefore, $\beta_1=\gamma_1 \sqcup \V_{\D}(\tau_1)$ (by Step~4). Now we claim that each simplex of the trajectory $P$ is of the form $\gamma_1 \sqcup \tau$, for some $\tau \in \D$. For otherwise, let $k~(\geq 1)$ be the smallest integer, such that, for each $1 \leq i \leq k$, $\alpha_i= \gamma_1 \sqcup \tau_i$ and $\beta_i= \gamma_1 \sqcup \tau_i^\prime$ ($\tau_i$ is a facet of $\tau_i^\prime$), and $\alpha_{k+1}= \emptyset \sqcup \tau_k^\prime$. Now, observe that, for each $1 \leq i \leq k$, $\tau_i^\prime=\V_{\D}(\tau_i)$ (by Step~2). So, $\alpha_{k+1}= \emptyset \sqcup \V_{\D}(\tau_k)$, and it is paired with a lower dimensional simplex $\emptyset \sqcup \tau_k$, a contradiction. Hence, each simplex of the trajectory $P$ is of the form $\gamma_1 \sqcup \tau$, for some $\tau \in \D$. Therefore, each simplex of the trajectory $P$ is in a one-to-one correspondence ($\gamma_1 \sqcup \tau \leftrightarrow \tau$) with the simplices of a $\V_{\D}$-trajectory, which can not be closed (from the acyclicity of $\V_{\D}$), a contradiction.
     		
     		\item Let $\sigma_1=\gamma_2$. We claim that, each simplex of the trajectory $P$ is of the form $\gamma_2 \sqcup \tau$, for some $\tau \in \D$. For otherwise, there exists a simplex $\alpha_k$, for some $k \in \{1,\ldots,r\}$, such that $\alpha_k= \sigma_k \sqcup \tau_k$, and $\sigma_k \neq \gamma_2$. Since, the trajectory $P$ is closed, we consider $\alpha_k$ as $\alpha_1$, and rename rest of the simplices accordingly. Now, if $\sigma_k \neq \emptyset$, this subcase became similar to one of the afore-mentioned subcases (\ref{Sc1}) and (\ref{Sc2}), and if $\sigma_k= \emptyset$, this subcase became similar to the Case~2. Hence, each simplex of the trajectory $P$ is of the form $\gamma_2 \sqcup \tau$, for some $\tau \in \D$. Therefore, each simplex of the trajectory $P$ is in a one-to-one correspondence ($\gamma_2 \sqcup \tau \leftrightarrow \tau$) with the simplices of a $\V_{\D}$-trajectory, which can not be closed (from the acyclicity of $\V_{\D}$), a contradiction.
     		\end{enumerate}
            Hence, $\W$ is gradient vector field on $\C * \D$, which has critical simplices $\gamma_2 \sqcup \delta_2$ and $\emptyset \sqcup \delta_1$, with $\dim(\gamma_2 \sqcup \delta_2)=m+n-1$ and $\dim(\emptyset \sqcup \delta_1)=0$. This makes $\C*\D$ a combinatorial sphere.
            
             \end{proof}
      
    \begin{proof}[Proof of Theorem~\ref{t1.9}]
    	By proposition~\ref{p4.12}, $\s_1 * \s_2 * \cdots *\s_m$ is a $\mathbb{Z}_p$-combinatorial sphere, and the theorem follows from Theorem~\ref{t1.7}.
    \end{proof}

	\bibliographystyle{plain}
	\bibliography{Hopf-trace}  

    \appendix
    
    \section{Appendix} \label{Appendix}
    In this section, we prove the well-definedness of the notion of combinatorial degree. At the end of the section we show how the $\mathbb{Z}_p$-Tucker's lemma (Theorem~\ref{t1.8}) follows from degree version (Theorem~\ref{t1.9}).
    \begin{proposition}\label{A1}
    	Let $\s$ be a $d$-dimensional pseudomanifold, and let $\{\sigma_1, \sigma_2, \ldots, \sigma_n\}$ be the set of all $d$-dimensional simplices of $\s$. Let  $\beta=\sum_{i=1}^n s_i \sigma_i$, where $s_i \in \mathbb{Z}$, such that $\partial(\beta)=0$. Then,
    	 for all $i\neq j$, $|s_{i}|=|s_{j}|$, where $i,j\in \{1,\ldots,n\}$.
    \end{proposition}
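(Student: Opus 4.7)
The plan is to exploit the two defining conditions of a $d$-dimensional pseudomanifold: (i) each $(d-1)$-simplex belongs to exactly two maximal simplices, and (ii) the top-dimensional simplices are chained together through shared $(d-1)$-faces. The cycle condition $\partial(\beta)=0$ will let me transfer $|s_i|$ across such a shared face, and the chain condition will then propagate equality to all pairs.

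First I would establish a local version of the claim: if $\sigma_i$ and $\sigma_j$ are two distinct top-dimensional simplices sharing a common $(d-1)$-face $\tau$, then $|s_i|=|s_j|$. Indeed, by pseudomanifold property (i), $\tau$ is a facet of only $\sigma_i$ and $\sigma_j$ among the top-dimensional simplices, so no other $\sigma_\ell$ contributes to the coefficient of $\tau$ in $\partial(\beta)$. Hence
\[
\langle \partial(\beta),\tau\rangle \;=\; s_i\,[\sigma_i,\tau] + s_j\,[\sigma_j,\tau] \;=\; 0.
\]
Since $[\sigma_i,\tau], [\sigma_j,\tau]\in\{\pm 1\}$, this forces $s_j = -s_i\,[\sigma_i,\tau]\,[\sigma_j,\tau]$, and in particular $|s_i|=|s_j|$.

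Next, to pass from this local statement to the full proposition, I would invoke pseudomanifold property (ii): for any two top-dimensional simplices $\sigma_i,\sigma_j$, there is a finite sequence $\sigma_i=\alpha_0,\alpha_1,\ldots,\alpha_k=\sigma_j$ of top-dimensional simplices such that $\alpha_t\cap\alpha_{t+1}$ is a $(d-1)$-simplex for each $t\in\{0,\ldots,k-1\}$. Applying the local step to each consecutive pair along this chain, the coefficients satisfy $|c_{\alpha_0}|=|c_{\alpha_1}|=\cdots=|c_{\alpha_k}|$, where $c_{\alpha_t}$ denotes the coefficient of $\alpha_t$ in $\beta$. Chaining these equalities gives $|s_i|=|s_j|$, completing the proof.

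There is no real obstacle here; the argument is a direct unpacking of the two pseudomanifold axioms combined with the cycle condition $\partial(\beta)=0$. The only thing worth flagging is that the local step crucially uses that \emph{exactly} two maximal simplices meet at any $(d-1)$-face (not just at most two), otherwise extra terms in the coefficient of $\tau$ could produce cancellations that destroy the equality $|s_i|=|s_j|$.
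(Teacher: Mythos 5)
Your proposal is correct and follows essentially the same two-step argument as the paper: first the local step across a shared $(d-1)$-face using pseudomanifold property (i) and $\partial(\beta)=0$, then propagation along a chain of adjacent maximal simplices via property (ii). Nothing substantive differs.
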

    \begin{proof}
    	We have $\beta=\sum_{i=1}^n s_i\sigma_i$. We first consider the case where $\sigma_{i}$, $\sigma_{j}$ are `adjacent', i.e., $\sigma_{i} \cap \sigma_{j}$ is a $(d-1)$-dimensional simplex. Let $\sigma_{i} \cap \sigma_{j} =\tau$. Now, since $\s$ is a pseudomanifold, the only $d$-dimensional simplices that contain $\tau$ are $\sigma_{i}$ and $\sigma_{j}$. Hence, the coefficient of $\tau$ in $\partial(\beta)$ is
    	\[s_{i}\cdot[\sigma_{i},\tau]+s_{j} \cdot [\sigma_{j},\tau].\]
    	Since, $\partial(\beta)=0$, we have
    	\begin{align*}
    	&	s_{i}\cdot[\sigma_{i},\tau]+s_{j} \cdot [\sigma_{j},\tau]=0\\
    		\implies & 	|s_{i}|=	|s_{j}|  ~~\text{(since, $[\sigma_{i},\tau],[\sigma_{i},\tau]\in \pm1$)}
    	\end{align*}
    	
    	Now, when $\sigma_{i}$ and $\sigma_{j}$ are not adjacent, by definition of pseudomanifold, there exists a sequence of  maximal simplices
    	\[(\sigma_{i}=)~\sigma_{i_0}, \sigma_{i_1}, \ldots, \sigma_{i_m}~(=\sigma_{j}),\]
    	such that $\sigma_{i_k}\cap \sigma_{i_{k+1}}$ is a $(d-1)$-simplex, for all $k\in \{0,\ldots, m-1\}$ (i.e., two consecutive simplices in the sequence are adjacent). So, $|s_{i_k}|=|s_{i_{k+1}}|$, for $k\in \{0,\ldots, m-1\}$. Therefore, $|s_i|=|s_j|$.
    	
    \end{proof}
    \begin{theorem} \label{A2}
    	Every combinatorial sphere is orientable.
    \end{theorem}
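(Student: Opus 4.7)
The plan is to exhibit an explicit orientation on $\s$ using the gradient vector field guaranteed by the definition of a combinatorial sphere, and then invoke Proposition~\ref{A1} to conclude that the coefficients of the resulting cycle are all $\pm 1$.

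Let $\s$ be a combinatorial $d$-sphere, and let $\V$ be a gradient vector field on $\s$ having exactly two critical simplices: a $d$-simplex $\tau$ and a $0$-simplex $v$. I would then consider the critical $d$-chain
\[\overrightarrow{\tau}^{(\V)}=\sum_{\substack{P:P\text{ is a}\\ \V\text{-trajectory,}\\ \inn(P)=\tau}}w(P)\cdot\trr(P).\]
By Lemma~\ref{l4.6}, this chain is a cycle, i.e., $\partial(\overrightarrow{\tau}^{(\V)})=0$. Writing $\overrightarrow{\tau}^{(\V)}=\sum_{i=1}^{n}s_{i}\sigma_{i}$ with $\sigma_{1}=\tau$ and $s_{i}\in\mathbb{Z}$, my goal is to verify that each $s_{i}\in\{\pm 1\}$, which by definition supplies an orientation on $\s$.

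The key observation is that $s_{1}=\langle \overrightarrow{\tau}^{(\V)},\tau\rangle=1$. Indeed, the trivial trajectory $P:\tau$ contributes $+1$ to the coefficient of $\tau$, while every nontrivial trajectory starting from $\tau$ terminates at some $\beta\in D_{d}^{(\V)}$ (by the definition of trajectory, the final simplex $\beta_{r}$ must appear as the top of a pair in $\V$); since $\tau$ is $\V$-critical, $\tau\notin D_{d}^{(\V)}$, so no nontrivial trajectory contributes to the coefficient of $\tau$. Once this is established, Proposition~\ref{A1} applies to the cycle $\overrightarrow{\tau}^{(\V)}$ in the pseudomanifold $\s$ and forces $|s_{i}|=|s_{1}|=1$ for all $i\in\{1,\ldots,n\}$. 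Hence $(s_{1},\ldots,s_{n})\in\{\pm 1\}^{n}$ is an orientation and $\s$ is orientable.

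There is no serious obstacle here once the preceding machinery is in place: Lemma~\ref{l4.6} delivers a $d$-cycle whose existence is the whole content of orientability, the observation about the coefficient of $\tau$ is a one-line consequence of the acyclicity of $\V$ plus the fact that $\tau$ is critical, and Proposition~\ref{A1} then propagates the value $\pm 1$ across the entire pseudomanifold along chains of adjacent top-dimensional simplices. The only mildly delicate point worth spelling out carefully in the write-up is the argument that no nontrivial $\V$-trajectory starting at $\tau$ can terminate at $\tau$ itself, which is precisely why the critical hypothesis on $\tau$ is used.
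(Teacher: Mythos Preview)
Your proof is correct and follows essentially the same approach as the paper: form the critical chain $\overrightarrow{\tau}^{(\V)}$, use Lemma~\ref{l4.6} to see it is a $d$-cycle, observe that the coefficient of the critical $d$-simplex is $1$, and then apply Proposition~\ref{A1} to force every coefficient to be $\pm 1$. Your justification that $\langle\overrightarrow{\tau}^{(\V)},\tau\rangle=1$ is in fact slightly more explicit than the paper's, which simply reads this off from the expansion of $\overrightarrow{\tau}^{(\V)}$ (cf.\ Observation~\ref{Ob2.13}(iii)).
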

    \begin{proof}
    	Let $\s$ be a combinatorial $d$-sphere, and $\sigma_1,\sigma_2, \ldots, \sigma_n$ be the $d$-dimensional simplices of $\s$. Let $\V$ be an gradient vector field on $\s$ having exactly two critical simplices, one $d$-dimensional and one $0$-dimensional. Let $\sigma_1$ be the $d$-dimensional critical simplex corresponding to $\V$. We have
    	 \begin{align*}
    		\overrightarrow{\sigma_1}^{(\V)} &=\sum_{\substack{P: P \text{ is a}\\ \V\text{-trajectory},\\ \inn(P)=\sigma_1}} w(P)\cdot \trr(P)\\
    		& = \sigma_1 + \sum_{i=2}^n \sum_{\substack{P: P \text{ is a}\\ \V\text{-trajectory,} \\ \inn(P)= \sigma_1, \\ \trr(P)=\sigma_i}} w(P)\cdot \sigma_i
    	\end{align*}
    	Note that, by Lemma~\ref{l4.7}, $\partial(\overrightarrow{\sigma_1}^{(\V)})=0$. Now, since the coefficient of $\sigma_1$ in $\overrightarrow{\sigma_1}^{(\V)}$ is 1 (by Observation~\ref{Ob2.13}(iv)). By Proposition~\ref{A1}, for all $i\in\{2,\ldots,n\}$, we have
    	\[|\sum_{\substack{P: P \text{ is a}\\ \V\text{-trajectory,} \\ \inn(P)= \sigma_1, \\ \trr(P)=\sigma_i}} w(P)|=1.\]
    	So, if we take $r_1=1$, and for all $i\in\{2,\ldots,n\}$, 
    		\[r_i=\sum_{\substack{P: P \text{ is a}\\ \V\text{-trajectory,} \\ \inn(P)= \sigma_1, \\ \trr(P)=\sigma_i}} w(P),\]
    		then $r=(r_1,r_2,\ldots, r_n)$ is an orientation on $\s$.
    \end{proof}
    \begin{proposition} \label{A3}
    	 There are exactly two possible orientations on a combinatorial sphere. More precisely, if $r$ and $t$ are two orientations on $\s$, then either $r=t$ or $r=-t$.
    \end{proposition}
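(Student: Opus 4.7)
The plan is to reduce the statement to Proposition~\ref{A1} by considering the sum of the two putative orientations. Let $\sigma_1,\ldots,\sigma_n$ be the $d$-simplices of $\s$, and suppose $\xi=(\xi_1,\ldots,\xi_n)$ and $\eta=(\eta_1,\ldots,\eta_n)$ are two orientations on $\s$, so that $\sum_i \xi_i\sigma_i$ and $\sum_i \eta_i\sigma_i$ are both cycles by definition. First I would form the chain $c=\sum_{i=1}^n(\xi_i+\eta_i)\sigma_i$ and observe, by linearity of $\partial$, that $\partial c=0$.

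Next, I would apply Proposition~\ref{A1} to $c$. This forces the coefficients $\xi_i+\eta_i$ to all have the same absolute value. Since $\xi_i,\eta_i\in\{\pm 1\}$, each coefficient lies in $\{-2,0,2\}$, so the common absolute value is either $0$ or $2$. In the first case, $\xi_i+\eta_i=0$ for every $i$, yielding $\eta=-\xi$. In the second case, $\xi_i+\eta_i\in\{-2,2\}$ for every $i$, which forces $\xi_i$ and $\eta_i$ to be equal (both $+1$ whenever the sum is $+2$, both $-1$ whenever the sum is $-2$), so $\eta=\xi$.

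To conclude that there are \emph{exactly} two orientations, I would invoke Theorem~\ref{A2}, which guarantees the existence of at least one orientation $\xi$; the negated tuple $-\xi$ is also clearly an orientation since $\partial(-\sum_i\xi_i\sigma_i)=-\partial(\sum_i\xi_i\sigma_i)=0$, and it is distinct from $\xi$. Combined with the uniqueness-up-to-sign proved in the previous step, this gives precisely two orientations.

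I do not anticipate any real obstacle here: the argument is just a linearity trick plus a direct appeal to Proposition~\ref{A1}. The only mild subtlety worth being explicit about is the case analysis on $\{-2,0,2\}$, in particular that the condition ``all $|\xi_i+\eta_i|$ equal $2$'' automatically forces $\xi_i=\eta_i$ rather than merely $|\xi_i|=|\eta_i|$, which is immediate once one notes that $\xi_i+\eta_i=\pm 2$ is only possible when $\xi_i$ and $\eta_i$ share a sign.
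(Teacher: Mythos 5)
Your proof is correct and uses the same central idea as the paper: apply Proposition~\ref{A1} to a $\mathbb{Z}$-linear combination of the two fundamental cycles. The paper's version first splits on whether $\xi_1=\eta_1$ or $\xi_1=-\eta_1$ and then applies Proposition~\ref{A1} to $[\s_\xi]-[\s_\eta]$ or $[\s_\xi]+[\s_\eta]$ respectively, reading off that the coefficient of $\sigma_1$ is zero and hence all coefficients vanish; you instead always work with the sum $c=\sum_i(\xi_i+\eta_i)\sigma_i$ and let the dichotomy fall out of the observation that the common absolute value must lie in $\{0,2\}$, which is a slightly cleaner packaging of the same argument. You also explicitly verify the ``at least two'' half of the claim via Theorem~\ref{A2} and the fact that $-\xi$ is an orientation, a point the paper leaves implicit; including it is a mild but genuine improvement in completeness.
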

    \begin{proof}
    Let $\s$ be a combinatorial $d$-sphere, and $\sigma_0,\sigma_1, \ldots, \sigma_n$ be the $n$-dimensional simplices of $\s$. Let, $r= (r_1,r_2,\ldots,r_n)$ and $t=(t_1,t_2,\ldots,t_n)$. We have
    \begin{align*}
    & [\s_r]= \sum_{i=1}^n r_i\sigma_i = r_1\sigma_1+\sum_{i=2}^n	r_i\sigma_i,\\
    \text{and } 	& [\s_t]= \sum_{i=1}^n t_i\sigma_i = t_1\sigma_1+\sum_{i=2}^n	t_i\sigma_i.
    \end{align*}
    
    \textbf{Case 1:} Let $t_1=r_1$. Then, we have
    \begin{align*}
    	[\s_r]- [\s_t] &= r_1\sigma_1+\sum_{i=2}^n	r_i\sigma_i- t_1\sigma_1-\sum_{i=2}^n	t_i\sigma_i\\
    	&=\sum_{i=2}^n	(r_i-t_i)\sigma_i
    \end{align*}
       Now, since $\partial([\s_r]- [\s_t])=0$, and the coefficient of $\sigma_1$ in 	$[\s_r]- [\s_t]$ is 0, by Proposition~\ref{A1}, we have 	$(r_i-t_i)=0$, i.e., $r_i=t_i$, for all $i\in \{2,\ldots,n\}$. Therefore, $r=t$.
       
        \textbf{Case 2:} Let $t_1=-r_1$. Then, we have
       \begin{align*}
       	[\s_r]+ [\s_t] &= r_1\sigma_1+\sum_{i=2}^n	r_i\sigma_i+ t_1\sigma_1+\sum_{i=2}^n	t_i\sigma_i\\
       	&=\sum_{i=1}^n	(r_i+t_i)\sigma_i
       \end{align*}
       Now, since $\partial([\s_r]+[\s_t])=0$, and the coefficient of $\sigma_1$ in 	$[\s_r]+[\s_t]$ is 0, by Proposition~\ref{A1}, we have 	$(r_i+t_i)=0$, i.e., $t_i=-r_i$, for all $i\in \{2,\ldots,n\}$. Therefore, $r=-t$.

    \end{proof}
    \begin{theorem} \label{A4}
    	 Let $\s$ be a combinatorial $d$-sphere and $r$ be an orientation on $\s$. Let $[\s_{r}]$ be the fundamental cycle corresponding to $r$. Let $\eta \in Z_d(\s,\mathbb{Z})$. Then
    \[\eta= k\cdot [\s_r]\]
    for some $k \in \mathbb{Z}$. 
    \end{theorem}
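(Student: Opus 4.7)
The plan is to reduce Theorem~\ref{A4} directly to Proposition~\ref{A1} and Proposition~\ref{A3}. Let $\sigma_1, \ldots, \sigma_n$ be the $d$-simplices of $\s$, write $\xi = (\xi_1, \ldots, \xi_n)$, so that $[\s_\xi] = \sum_{i=1}^n \xi_i \sigma_i$, and write the given cycle as $\eta = \sum_{i=1}^n s_i \sigma_i$ with $s_i \in \mathbb{Z}$. Since $\partial(\eta) = 0$, Proposition~\ref{A1} immediately forces $|s_1| = |s_2| = \cdots = |s_n|$; call this common value $k_0 \in \mathbb{Z}_{\geq 0}$.

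First I would dispose of the trivial case $k_0 = 0$: then every $s_i$ vanishes, so $\eta = 0 = 0 \cdot [\s_\xi]$ and we are done. So assume $k_0 \geq 1$. For each $i$, write $s_i = \epsilon_i k_0$ with $\epsilon_i \in \{+1, -1\}$, and set $\eta' := \sum_{i=1}^n \epsilon_i \sigma_i$. Then $\eta = k_0 \cdot \eta'$ by construction. Since $C_{d-1}(\s, \mathbb{Z})$ is a free $\mathbb{Z}$-module and $k_0 \neq 0$, the identity $k_0 \cdot \partial(\eta') = \partial(\eta) = 0$ forces $\partial(\eta') = 0$. Thus the sign vector $(\epsilon_1, \ldots, \epsilon_n)$ fulfills the definition of an orientation on $\s$, and $\eta'$ is the fundamental cycle corresponding to this orientation.

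Now I would invoke Proposition~\ref{A3}, which says there are exactly two orientations on $\s$, namely $\xi$ and $-\xi$. Hence either $(\epsilon_1, \ldots, \epsilon_n) = \xi$, in which case $\eta' = [\s_\xi]$ and $\eta = k_0 \cdot [\s_\xi]$, or $(\epsilon_1, \ldots, \epsilon_n) = -\xi$, in which case $\eta' = -[\s_\xi]$ and $\eta = -k_0 \cdot [\s_\xi]$. In either situation $\eta = k \cdot [\s_\xi]$ for some $k \in \mathbb{Z}$, which completes the proof.

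There is no genuine obstacle here since both ingredients are already established; the only point that requires a moment of care is the passage from $k_0 \partial(\eta') = 0$ to $\partial(\eta') = 0$, which rests on the fact that the chain groups are torsion-free $\mathbb{Z}$-modules. Everything else is bookkeeping: Proposition~\ref{A1} pins down the absolute values of the coefficients, and Proposition~\ref{A3} pins down the signs up to a global $\pm 1$.
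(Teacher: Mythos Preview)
Your proof is correct and follows essentially the same approach as the paper's own proof: apply Proposition~\ref{A1} to get a common absolute value for the coefficients, factor it out to obtain a sign vector, check that this sign vector is an orientation (using torsion-freeness of the chain group), and then invoke Proposition~\ref{A3} to identify it with $\pm\xi$. The only cosmetic difference is notation.
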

    \begin{proof}
    		Let $\{\sigma_1,\sigma_2,\ldots,\sigma_n\}$ be the set of all $d$-simplices of $\s$, and $\eta= \sum_{i=1}^n s_i \sigma_i$, where $s_i\in \mathbb{Z}$. By Proposition~\ref{A1}, $|s_i|=|s_j|$, for all $i,j \in \{1,2,\ldots,n\}$. Let $|s_1|=l$. When $l=0$, $\eta=0= 0\cdot [\s_r]$. 
    		
    		If $l\neq0$, we have
    		\[\eta  = \sum_{i=1}^n s_i \sigma_i= l \cdot \sum_{i=1}^n r^\prime_i \sigma_i,\]
    		where $r_i^\prime \in \pm 1$. Now, 
    		\begin{align*}&\partial(\eta)=0 \\
    			\implies& \partial(l \cdot \sum_{i=1}^n r^\prime_i \sigma_i)=0\\ \implies &l \cdot\partial( \sum_{i=0}^n r^\prime_i \sigma_i)=0\\ \implies&\partial( \sum_{i=1}^n r^\prime_i \sigma_i)=0 ~\text{(since, $l\neq 0$)}.\\
    		\end{align*}
    		Therefore, $r^\prime= (r^\prime_1,r^\prime_2,\ldots,r^\prime_n)$ is an orientation on $\s$. Hence, by Theorem~\ref{A3}, either $r=r^\prime$, or $r=-r^\prime$. When $r=r^\prime$, $\eta= l\cdot [\s_r]$, and when $r=-r^\prime$, $\eta= (-l)\cdot [\s_r]$.
    \end{proof}
    
    Now, we prove that the notion of the degree of a chain map (and the consequently the notion of combinatorial degree) is well-defined. More precisely, we have the following. 
    \begin{theorem} \label{A5}
    	Let $\s$ be a combinatorial $d$-sphere, and let $\phi_\#: C_\#(\s)\rightarrow C_\#(\s)$ be a chain map. Let $r_1$ and $r_2$ be two orientations on $\s$, such that, $\phi_d([\s_{r_1}])=k_1\cdot [\s_{r_1}]$ and $\phi_d([\s_{r_2}])=k_2\cdot [\s_{r_2}]$. Then 
    	$k_1=k_2$.
    	(Note that, the existence of the integers $k_1$ and $k_2$ is guaranteed from Theorem~\ref{A4})
    \end{theorem}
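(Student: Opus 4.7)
The plan is to reduce everything to Proposition~\ref{A3}, which classifies the orientations on a combinatorial sphere: namely, for any two orientations $\xi_1, \xi_2$ on $\s$ we must have either $\xi_1=\xi_2$ or $\xi_1=-\xi_2$. Correspondingly, the associated fundamental cycles satisfy either $[\s_{\xi_1}]=[\s_{\xi_2}]$ or $[\s_{\xi_1}]=-[\s_{\xi_2}]$. Note also that, because every entry $\xi_i$ of an orientation lies in $\{\pm 1\}$, the chain $[\s_{\xi_1}]$ is a nonzero element of the free $\mathbb{Z}$-module $C_d(\s,\mathbb{Z})$, so multiplication by an integer is injective on the $\mathbb{Z}$-span of $[\s_{\xi_1}]$.

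With this reduction in hand, I would split into the two cases. In the first case $[\s_{\xi_1}]=[\s_{\xi_2}]$, the two defining equations give
\[
k_1\cdot [\s_{\xi_1}] \;=\; \phi_d([\s_{\xi_1}]) \;=\; \phi_d([\s_{\xi_2}]) \;=\; k_2\cdot [\s_{\xi_2}] \;=\; k_2\cdot [\s_{\xi_1}],
\]
and since $[\s_{\xi_1}]\neq 0$ in the free $\mathbb{Z}$-module $C_d(\s,\mathbb{Z})$, we conclude $k_1=k_2$. In the second case $[\s_{\xi_1}]=-[\s_{\xi_2}]$, the $\mathbb{Z}$-linearity of $\phi_d$ gives
\[
k_1\cdot [\s_{\xi_1}] \;=\; \phi_d([\s_{\xi_1}]) \;=\; -\phi_d([\s_{\xi_2}]) \;=\; -k_2\cdot [\s_{\xi_2}] \;=\; k_2\cdot [\s_{\xi_1}],
\]
and again the non-vanishing of $[\s_{\xi_1}]$ forces $k_1=k_2$.

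There is essentially no obstacle once Proposition~\ref{A3} is invoked: the heart of the matter (proving that the two fundamental cycles differ only by an overall sign) has already been done there via Proposition~\ref{A1}. The only thing to verify carefully is that $[\s_{\xi_1}]$ is not a torsion element, which is immediate because $C_d(\s,\mathbb{Z})$ is free on the set of top-dimensional simplices and the coefficients $\xi_i$ are all $\pm 1$. Hence cancelling $[\s_{\xi_1}]$ from both sides is legitimate. This concludes the proof, and so the combinatorial degree of Definition~\ref{d4.6} is well-defined independently of the chosen orientation.
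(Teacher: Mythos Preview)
Your proof is correct and follows essentially the same approach as the paper: both invoke Proposition~\ref{A3} to reduce to the two cases $[\s_{\xi_1}]=\pm[\s_{\xi_2}]$, and then use linearity of $\phi_d$ to obtain $k_1\cdot[\s_{\xi_1}]=k_2\cdot[\s_{\xi_1}]$. Your version is arguably slightly more careful in explicitly justifying the cancellation step via the freeness of $C_d(\s,\mathbb{Z})$, which the paper leaves implicit.
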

    \begin{proof}
   By Theorem~\ref{A3}, $r_1=r_2$, or $r_1=-r_2$. When $r_1=r_2$, we have $k_1=k_2$. When $r_1=-r_2$, we have
    \begin{align*}
    &[\s_{r_1}]=-[\s_{r_2}]\\
    \implies&	\phi_d([\s_{r_1}])=\phi_d(-[\s_{r_2}])\\
    \implies&	\phi_d([\s_{r_1}])=-\phi_d([\s_{r_2}])\\
    \implies&k_1\cdot [\s_{r_1}]=-k_2\cdot [\s_{r_2}]\\
     \implies&k_1\cdot [\s_{r_1}]=k_2\cdot (-[\s_{r_2}])\\
      \implies&k_1\cdot [\s_{r_1}]=k_2\cdot [\s_{r_1}]\\
      \implies & k_1=k_2.
    \end{align*}
    \end{proof}
    
   \begin{lemma}[cone lemma] \label{A6}
   	Let $\s$ be a combinatorial $d$-sphere, and $\T$ be a simplicial complex with dimension at most $d$. Let $r$ be an orientation on $\s$, and $[\s_r]$ be the fundamental cycle corresponding to $r$.  Let $\phi:x*\s \rightarrow \T$ be a simplicial map, and  \[\phi_\#: C_\#(x*\s, \mathbb{Z}) \rightarrow C_\#(\T,\mathbb{Z})\] be the chain map induced by $\phi$. Then,
   	\[\phi_d([\s_r])=0.\]
   \end{lemma}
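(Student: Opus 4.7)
The plan is to exhibit $[\s_\xi]$ as a $d$-boundary inside the chain complex of the cone $x*\s$, and then push it through the chain map $\phi_\#$, relying on the hypothesis $\dim(\T)\leq d$ to force the image to vanish. Explicitly, let $\sigma_1,\ldots,\sigma_n$ be the $d$-simplices of $\s$, so that $[\s_\xi]=\sum_{i=1}^n \xi_i\sigma_i$, and consider the $(d+1)$-chain
\[c\;:=\;\sum_{i=1}^n \xi_i\,(x*\sigma_i)\;\in\;C_{d+1}(x*\s,\mathbb{Z}),\]
where for an oriented $d$-simplex $\sigma=[v_0,\ldots,v_d]$, $x*\sigma$ denotes the oriented $(d+1)$-simplex $[x,v_0,\ldots,v_d]$.

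The core computation is the standard cone boundary identity $\partial(x*\sigma)=\sigma - x*\partial(\sigma)$, which follows directly from the definition of the boundary map by expanding $\partial[x,v_0,\ldots,v_d]$ and splitting off the term where $x$ is deleted. Extending linearly, one obtains
\[\partial c \;=\; [\s_\xi]\;-\;x*\partial([\s_\xi]).\]
Since $\xi$ is an orientation on $\s$, $\partial([\s_\xi])=0$, and therefore $\partial c=[\s_\xi]$. In other words, the fundamental cycle of $\s$ becomes a boundary inside $x*\s$.

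Given this, the conclusion is immediate: using that $\phi_\#$ is a chain map,
\[\phi_d([\s_\xi])\;=\;\phi_d(\partial c)\;=\;\partial\bigl(\phi_{d+1}(c)\bigr).\]
Now for each $i$, the simplex $x*\sigma_i$ has dimension $d+1$, while its image $\phi(x*\sigma_i)$ is a simplex of $\T$ of dimension at most $\dim(\T)\leq d<d+1$. By the definition of the induced chain map, this forces $\phi_{d+1}(x*\sigma_i)=0$ for every $i$; hence $\phi_{d+1}(c)=0$ and therefore $\phi_d([\s_\xi])=0$, as required. The only mildly delicate step is verifying the sign in the cone boundary identity, but this is a routine computation from the definition of incidence numbers, and any sign ambiguity is harmless since it only affects whether $[\s_\xi]=\partial c$ or $[\s_\xi]=-\partial c$, both of which yield the same conclusion after applying $\phi_d$.
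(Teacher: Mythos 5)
Your proof is correct and takes essentially the same approach as the paper's: both realize $[\s_\xi]$ as $\partial c$ for the coned chain $c=\sum_i\xi_i\,(x*\sigma_i)$ and then use $\dim\T\le d$ to force $\phi_{d+1}(c)=0$. The only cosmetic difference is that you apply the cone boundary identity $\partial(x*\sigma)=\sigma-x*\partial\sigma$ directly, whereas the paper expands the incidence numbers and cancels the extra terms via the cycle condition $\sum_i\xi_i[\sigma_i,\tau]=0$.
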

   \begin{proof}
    Let, $\{\sigma_1,\sigma_2, \ldots, \sigma_n\}$ be the set of all $d$-dimensional simplices of $\s$. Let $r=(r_1,r_2,\ldots,r_n)$. We have, $[\s_r]=\sum_{i=1}^n r_i\sigma_i$. Now, from the definition of fundamental cycle, we have
    \begin{align}
    \nonumber	& \partial([\s_r])=0\\
    \nonumber	\implies & \partial(\sum_{i=1}^n r_i\sigma_i)=0\\
    \nonumber	\implies & \sum_{i=1}^n r_i \cdot \partial(\sigma_i)=0\\
    \nonumber	\implies & \sum_{i=1}^n r_i \cdot \left(\sum_{\tau \in S_{d-1}(\s)}[\sigma_i,\tau]\cdot \tau \right)=0\\
		\nonumber \implies &  \sum_{\tau \in S_{d-1}(\s)}\left(\sum_{i=1}^n r_i  \cdot[\sigma_i,\tau]\right)\cdot \tau=0 \\
		\implies & \sum_{i=1}^n r_i  \cdot[\sigma_i,\tau] =0, \text{ for all } \tau \in S_{d-1}(\s) \label{E20}
    \end{align}

   For an oriented simplex $\sigma=(v_0\ldots v_k)\in S_k(\s)$, let $x*\sigma=(x~v_0\ldots v_k)$. Now consider the following commutative diagram (Figure~\ref{f4}).
   
   	\begin{figure}[h]
   	\[\begin{tikzcd}
   		\cdots && {C_{d+1}(x*\s, \mathbb{Z})} && {C_d(x*\s, \mathbb{Z})}  && \cdots \\
   		\\
   		\cdots && 0 && {C_{d}(\T, \mathbb{Z})}  && \cdots
   		\arrow["{\partial_{d+2}}", from=1-1, to=1-3]
   		\arrow["{\partial_{d+1}}", from=1-3, to=1-5]
   		\arrow["{\phi_{d+1}}", from=1-3, to=3-3]
   		\arrow["{\partial_d}", from=1-5, to=1-7]
   		\arrow["{\phi_d}", from=1-5, to=3-5]
   		\arrow["{\partial_{d+2}}", from=3-1, to=3-3]
   		\arrow["{\partial_{d+1}}", from=3-3, to=3-5]
   		\arrow["{\partial_{d}}", from=3-5, to=3-7]   
   	\end{tikzcd}\]
   	\caption{Commutative diagram corresponding to the chain map $\phi_\#$.}\label{f4}
   \end{figure}
   
    By  the commutivity of the diagram (Figure~\ref{f4}), we have
   
   \begin{align}
   	\nonumber\implies  & \phi_d ( \partial_{d+1}(\sum_{i=1}^n r_i \cdot (x*\sigma_i)))= \partial_{d+1} ( \phi_{d+1}(\sum_{i=1}^n r_i \cdot (x*\sigma_i)))=\partial_{d+1} (0)=0\\
   	\implies  & \phi_d (\sum_{i=1}^n  r_i \cdot\partial_{d+1}(  x*\sigma_i))=0 \label{E21}
   \end{align}
  
    From the definition of the boundary map, we have
 	\begin{equation}\label{E22}
 	\partial_{d+1}(x* \sigma_i)= [x*\sigma_i, \sigma_i]\cdot \sigma_i + \sum_{\tau<\sigma_i} [x*\sigma_i, x*\tau]\cdot (x*\tau)
 	\end{equation}

 	Now observe that, if $\sigma=(v_0\ldots v_t \ldots v_d)$ and $\tau= (v_0\ldots \widehat{v_t} \ldots v_d )$, we have $[\sigma, \tau]= (-1)^t$ and $[x*\sigma, x*\tau]= (-1)^{t+1}$. In other words, for any $\sigma, \tau \in \s$, such that $\tau < \sigma$, we have
 	$[x*\sigma, x*\tau]=-[\sigma, \tau]$. Also note that, $[x*\sigma,\sigma]=(-1)^0=1$, for any $\sigma \in \s$.
 	So, the Equation~\eqref{E22} becomes,
 	\begin{align} \label{E23}
 		\partial_{d+1}(x* \sigma_i)= \sigma_i  - \sum_{\tau<\sigma_i} [\sigma_i, \tau]\cdot (x*\tau).
 	\end{align}
 	Now, putting this expression of $\partial_{d+1}(x* \sigma_i)$ (Equation~\eqref{E23}) into Equation~\eqref{E21}, we get
 	\begin{align*}
 	 & \phi_d (\sum_{i=1}^n  r_i \cdot( \sigma_i  - \sum_{\tau<\sigma_i} [\sigma_i, \tau]\cdot (x*\tau)))=0\\
 	 \implies& \phi_d ( \sum_{i=1}^n r_i \cdot \sigma_i  - \sum_{i=1}^n r_i \cdot \sum_{\tau<\sigma_i} [\sigma_i, \tau]\cdot (x*\tau))=0\\
 	  \implies &\phi_d ( \sum_{i=1}^n r_i \cdot \sigma_i  - \sum_{i=1}^n r_i \cdot \sum_{\tau \in S_{q-1}(\s)} [\sigma_i, \tau]\cdot (x*\tau))=0\\
 	   \implies& \phi_d (\sum_{i=1}^n r_i \cdot \sigma_i  - \sum_{\tau \in S_{q-1}(\s)} \sum_{i=1}^n r_i  \cdot [\sigma_i, \tau]\cdot (x*\tau))=0\\
 	    \implies& \phi_d (\sum_{i=1}^n r_i \cdot \sigma_i)=0~\text{ (from Equation~\eqref{E20})}\\
 	      \implies &\phi_d ([\s_r])=0
 	\end{align*}
 \end{proof}
 	
 	We now show that, the generalised $\mathbb{Z}_p$-Tucker's lemma (Theorem~\ref{t1.8}) follows from our combinatorial degree version (Theorem~\ref{t1.9}).
 	
 	\begin{proof} [Proof of Theorem~\ref{t1.8}]
 		We will prove the theorem by contradiction.
 		Let  $f: x* \Bd^k(S_1* \cdots * S_m) \rightarrow S_1 * \cdots *S_m$ be a simplicial map , such that,
 		\[f\big|_{\Bd^k(S_1* \cdots S_m)}:\Bd^k(S_1* \cdots S_m) \rightarrow S_1 * \cdots *S_m\]
 		is $\mathbb{Z}_p$-equivariant. Let $r$ be an orientation on $\Bd^k(S_1* \cdots S_m)$, and $[\Bd^k(S_1* \cdots S_m)_r]$ be the fundamental cycle corresponding to $r$. 
 		Let $f_\#: C_\#(x* \Bd^k(S_1* \cdots S_m)) \rightarrow C_\#(S_1 * \cdots *S_m)$ be chain map corresponding to $f$. Then from cone lemma (Theorem~\ref{A6}), we have
 		\begin{align*}
 			& f_d([\Bd^k(S_1* \cdots S_m)_r])=0\\
 			\implies& (f\big|_{\Bd^k(S_1* \cdots S_m)})_d([\Bd^k(S_1* \cdots S_m)_r])=0\\
 			\implies & \deg (f\big|_{\Bd^k(S_1* \cdots S_m)})=0.
 		\end{align*}
 		But, by Theorem~\ref{t1.9}, we have $\deg (f\big|_{\Bd^k(S_1* \cdots S_m)})\equiv 1 \pmod{p}$, a contradiction.
 	\end{proof}

 \end{document}